\newtheorem{lem}{Lemma}[section]
\newtheorem{prop}{Proposition}[section]
\newtheorem{thm}{Theorem}[section]
\newtheorem{cor}{Corollary}[section]
\theoremstyle{defn}
\newtheorem{defn}{Definition}[section]
\theoremstyle{rem}
\newtheorem{rem}{Remark}[section]
\newtheorem{rmks}{Remarks}[section]
\newtheorem*{Examples}{Examples}
\newcommand{\N}{\mathbb{N}}
\newcommand{\Q}{\mathbb{Q}}
\newcommand{\R}{\mathbb{R}}
\newcommand{\cs}{2^\omega}
\newcommand{\fs}{2^{<\omega}}
\newcommand{\K}{\mathrm{K}}
\newcommand{\uh}{\upharpoonright}
\newcommand{\emptystring}{\epsilon}
\newcommand{\Low}[1]{\mathrm{Low}(#1)}
\newcommand{\Lowpair}[2]{\mathrm{Low}(#1,#2)}
\newcommand{\plop}[1]{\langle #1 \rangle}
\newcommand{\Highpair}[2]{\mathrm{High}(#1,#2)}
\newcommand{\SMLR}{\mathrm{W2R}}
\newcommand{\MLR}{\mathrm{MLR}}
\newcommand{\CR}{\mathrm{CR}}
\newcommand{\SR}{\mathrm{SR}}
\newcommand{\jump}{\mathbf{0}'}
\newcommand{\dom}{\mathrm{dom}}
\begin{document}

\title{Randomness and lowness\\notions via open covers}
\author{Laurent Bienvenu and Joseph S. Miller}
%
%

\maketitle

\begin{abstract}
One of the main lines of research in algorithmic randomness is that of lowness notions. Given a randomness notion $\mathscr{R}$, we ask for which sequences~$A$ does relativization to $A$ leave $\mathscr{R}$ unchanged (i.e., $\mathscr{R}^A=\mathscr{R}$)? Such sequences are call \emph{low for $\mathscr{R}$}. This question extends to a pair of randomness notions $\mathscr{R}$ and $\mathscr{S}$, where $\mathscr{S}$ is weaker: for which~$A$ is $\mathscr{S}^A$ still weaker than~$\mathscr{R}$? In the last few years, many results have characterized the sequences that are low for randomness by their low computational strength. A few results have also given measure-theoretic characterizations of low sequences. For example, Kjos-Hanssen (following  Ku{\v c}era) proved that $A$ is low for Martin-L\"of randomness if and only if every $A$-c.e.\ open set of measure less than~$1$ can be covered by a c.e.\ open set of measure less than~$1$. 
 
In this paper, we give a series of results showing that a wide variety of lowness notions can be expressed in a similar way, i.e., via the ability to cover open sets of a certain type by open sets of some other type. This provides a unified framework that clarifies the study of lowness for randomness notions, and allows us to give simple proofs of a number of known results. We also use this framework to prove new results, including showing that the classes $\Lowpair{\MLR}{\SR}$ and $\Lowpair{\SMLR}{\SR}$ coincide, answering a question of Nies. Other applications include characterizations of highness notions, a broadly applicable explanation for why low for randomness is the same as low for tests, and a simple proof that $\Lowpair{\SMLR}{\mathscr{S}} = \Lowpair{\MLR}{\mathscr{S}}$, where $\mathscr{S}$ is the class of Martin-L\"of, computable, or Schnorr random sequences.

The final section gives characterizations of lowness notions using summable functions and convergent measure machines instead of open covers. We finish with a simple proof of a result of Nies, that $\Low{\MLR}=\Lowpair{\MLR}{\CR}$.
\end{abstract}

\section{Introduction}\label{sec:intro}

This paper is organized as follows. In the remainder of this section we review notation, introduce the basic notions, including the relevant randomness classes, and survey what is known about lowness for randomness notions. In Section~\ref{sec:open-covers} we consider Ku{\v c}era's result that $X$ is not Martin-L\"of random iff there is a c.e.\ open set $\mathcal{U}$ of measure less than 1 such that $U$ covers all tails of $X$. We prove analogous theorems for computable and Schnorr randomness by placing further restrictions on the c.e.\ open covers.

In Section~\ref{sec:main-lemma} we prove our main technical lemma and show that it applies to Martin-L\"of randomness, computable randomness and Schnorr randomness. Together with the previous section, the main lemma provides a unified framework to study lowness classes in terms of c.e.\ open covers. Section~\ref{sec:applications} gives a number of applications. Kjos-Hanssen~\cite{Kjos-Hanssen2007} (based on the ideas of Ku{\v c}era) showed that $A$ is low for Martin-L\"of randomness if and only if every $A$-c.e.\ open set of measure less than~$1$ can be covered by a c.e.\ open set of measure less than~$1$. In Section~\ref{ssec:apps-lowness}, we show that a wide variety of lowness notions can be expressed in a similar way, i.e., via the ability to cover open sets of a certain type by open sets of another type. Kjos-Hanssen's result actually gives a characterization of LR-reducibility, and in Section~\ref{ssec:apps-reducibilities}, we note that similar characterizations could be given for the weak reducibilities associated with computable and Schnorr randomness. In Section~\ref{ssec:apps-tests} we give a broadly applicable explanation for why lowness for randomness has, in the cases that have been studied, turned out to be the same as lowness for tests. In Section~\ref{ssec:apps-W2R} we show that $\Lowpair{\SMLR}{\mathscr{S}} = \Lowpair{\MLR}{\mathscr{S}}$ for $\mathscr{S}\in \{\MLR,\CR,\SR\}$. Two of these facts were known, but the Schnorr randomness case answers an open question of Nies~\cite[Problem~8.3.16]{Nies2009}. Finally, Section~\ref{ssec:apps-highness} applies our framework to highness notions, focusing on the poorly understood class $\Highpair{\CR}{\MLR}$.

Section~\ref{sec:other-refumulations} departs from the rest of the paper; in it, we reformulate lowness notions using summable functions and convergent measure machines instead of open covers. A final application is given in Section~\ref{ssec:apps-winning}, where we give a straightforward proof that $\Low{\MLR}=\Lowpair{\MLR}{\CR}$ (Nies \cite{Nies2005,Nies2009}).

\subsection{Basic notation}

We work in Cantor space, in other words, the set $\cs$ of infinite binary sequences. We write $\fs$ for the set of finite binary strings and $\emptystring \in \fs$ for the empty string. If~$S$ is a subset of $\fs$, we define
\begin{align*}
S^n &= \{\sigma \in \fs \; : \; \sigma=\sigma_0\sigma_1\sigma_2\ldots\sigma_{n-1} \text{ s.t.\ } (\forall i<n)\; \sigma_i\in S\}\text{, and}\\
S^\omega &= \{A \in \cs \; : \; A=\sigma_0\sigma_1\sigma_2\ldots \text{ s.t.\ } (\forall i)\; \sigma_i \in S\}.
\end{align*}
For a string~$\sigma$, $[\sigma]$ denotes the cylinder generated by $\sigma$, in other words, the set of infinite sequences with prefix $\sigma$. For $U\subseteq\fs$, the open set generated by $U$ is $[U]=\bigcup_{\sigma\in U} [\sigma]$. We denote the Lebesgue measure on $\cs$ by $\mu$ (a.k.a.\ the uniform measure on $\cs$, which can be defined as the unique probability measure on $\cs$ that satisfies $\mu([\sigma])=2^{-|\sigma|}$). If $U$ is a prefix-free subset of $\fs$, the \emph{measure of $U$} is the quantity $\mu(U) = \mu([U]) = \sum_{\sigma \in U} 2^{-|\sigma|}$. Note that $\mu(U^n) = \mu(U)^n$, again assuming that $U\subset\fs$ is prefix-free. We say that an open set (resp.\ prefix-free set of strings) is \emph{bounded} if its measure is smaller than~$1$. A \emph{c.e.\ open set} (or \emph{$\Sigma^0_1$ class}) is an open set generated by a c.e.\ prefix-free set of strings. We say that a c.e.\ open set (resp.\ c.e.\ prefix-free set of strings) is a \emph{Schnorr set} if its measure is computable. 

If $A \in \cs$, we denote by $A \uh n$ the prefix $A$ of size~$n$, i.e., $A \uh n = A(0)A(1) \ldots A(n-1)$. Also, we call a \emph{tail of $A$} any infinite sequence of type $A(k)A(k+1)A(k+2)\ldots$ for $k \in \N$ (in other words, any sequence obtained by removing a finite prefix from~$A$). If $\mathcal{X}$ is a subset of $\cs$ and $\sigma$ a finite string, we set
\[
(\mathcal{X} \mid \sigma) = \{Z \in \cs\;  :  \; \sigma Z \in \mathcal{X}\}. 
\]
Similarly, if $W$ is a subset of $\fs$ we set
\[
(W \mid \sigma) = \{\tau \in \fs\;  :  \; \sigma\tau \in W\}.
\]
Note that this is consistent with the conditional probability notation: $\mu(\mathcal{X} \mid \sigma)$ is just the measure of $\mathcal{X}$ conditioned by $[\sigma]$, i.e., $\mu(\mathcal{X} \cap [\sigma])/\mu(\sigma)$. Note also that if $\mathcal{U}$ is a c.e.\ open set, then so is $(\mathcal{U} \mid \sigma)$ for all~$\sigma$. If moreover the measure of $\mathcal{U}$ is computable, then so is the measure of $(\mathcal{U} \mid \sigma)$ (uniformly in $\mu(\mathcal{U})$ and $\sigma$).

\subsection{Randomness notions}

In general, a \emph{test} is a non-increasing sequence $(\mathcal{U}_n)_{n \in \N}$ of open sets such that $\bigcap_n \mathcal{U}_n$ has measure~$0$. We say that a sequence~$X \in \cs$ \emph{fails} the test $(\mathcal{U}_n)_{n \in \N}$ if $X \in \bigcap_n \mathcal{U}_n$, and that $X$ \emph{passes the test} otherwise. If $\mathcal{X}$ is a subset of $\cs$, we say that a test $(\mathcal{U}_n)_{n \in \N}$ \emph{covers} $\mathcal{X}$ if $\mathcal{X} \subseteq  \bigcap_n \mathcal{U}_n$. We say that a test $(\mathcal{U}_n)_{n \in \N}$ covers another test $(\mathcal{U}'_n)_{n \in \N}$ if $ \bigcap_n \mathcal{U}'_n \subseteq  \bigcap_n \mathcal{U}_n$.

\begin{defn}
A test $(\mathcal{U}_n)_{n \in \N}$ is a Martin-L\"of test if $\mu(\mathcal{U}_n) \leq 2^{-n}$ for all~$n$. It is a Schnorr test if one further has $\mu(\mathcal{U}_n) = 2^{-n}$. We say that $X$ is Martin-L\"of random if it passes all Martin-L\"of tests, and that $X$ is Schnorr random if it passes all Schnorr tests. We denote by $\MLR$ the set of Martin-L\"of random sequences and by $\SR$ the set of Schnorr random sequences. 
\end{defn}

\begin{rem}\label{rem:test-note}
It should be noticed that the quantity $2^{-n}$ in the above definition is arbitrary: we would get the same classes $\MLR$ and $\SR$ if we replaced it by any $f(n)$, with $f$ a computable function that tends to~$0$. Another important fact is that there exists a \emph{universal} Martin-L\"of test, i.e., a Martin-L\"of test such that for any sequence~$X$, $X$ passes that test if and only~$X$ is Martin-L\"of random. There is no such universal test for Schnorr randomness.
\end{rem}

A third important notion of randomness is computable randomness, whose definition involves the concept of martingale. 

A martingale is a function $d: \fs \rightarrow \R^{\geq 0}$ such that for all $\sigma \in \fs$
\[
d(\sigma)=\frac{d(\sigma 0)+d(\sigma 1)}{2}.
\]
It is said to be \emph{normed} if $d(\emptystring)=1$. We say that a martingale \emph{succeeds} on a sequence $X \in \cs$ if $\limsup d(X \uh n)=+\infty$. 

For any martingale, the set of sequences on which it succeeds has measure~$0$. This is a direct consequence of the so-called Ville-Kolmogorov inequality.

\begin{prop}
Let~$d$ be a martingale, $\sigma \in \fs$ and $q >1$ a real number. Let
\[
\mathcal{U}_{d,\sigma,q}=\big\{X \in \cs \; : \; (\exists n > |\sigma|) \; d(X \uh n) \geq q \cdot d(\sigma)\big\}. 
\]
Then $\mu(\mathcal{U}_{d,\sigma,q} \mid \sigma) \leq 1/q$.
\end{prop}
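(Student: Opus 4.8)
The plan is to reduce the bound to an estimate on a single prefix-free set and then invoke the defining equation of the martingale. Since the conditional measure only depends on $\mathcal{U}_{d,\sigma,q}\cap[\sigma]$, let $S$ be the set of $\subseteq$-minimal strings $\tau$ with $\sigma\subsetneq\tau$ and $d(\tau)\geq q\cdot d(\sigma)$ (minimality meaning that no $\rho$ with $\sigma\subsetneq\rho\subsetneq\tau$ already satisfies $d(\rho)\geq q\cdot d(\sigma)$). Then $S$ is prefix-free, every $\tau\in S$ properly extends $\sigma$ and has $d(\tau)\geq q\cdot d(\sigma)$, and, provided $d(\sigma)>0$, every $X\in\mathcal{U}_{d,\sigma,q}\cap[\sigma]$ has a prefix in $S$ (take the shortest prefix of length $>|\sigma|$ on which $d$ reaches $q\cdot d(\sigma)$). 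Hence $\mathcal{U}_{d,\sigma,q}\cap[\sigma]\subseteq[S]$, and since $\mu(\mathcal{U}_{d,\sigma,q}\mid\sigma)=2^{|\sigma|}\,\mu(\mathcal{U}_{d,\sigma,q}\cap[\sigma])$, it suffices to prove $\mu([S])\leq 2^{-|\sigma|}/q$, as this yields $\mu(\mathcal{U}_{d,\sigma,q}\mid\sigma)\leq 2^{|\sigma|}\mu([S])\leq 1/q$.

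The crux is the summation form of Ville's inequality: for any prefix-free $T\subseteq\fs$ whose elements all extend $\sigma$,
\[
\sum_{\tau\in T}2^{-|\tau|}\,d(\tau)\ \leq\ 2^{-|\sigma|}\,d(\sigma).
\]
For finite $T$ this is a routine induction on $\max_{\tau\in T}|\tau|$: pick a longest $\rho b\in T$ and replace it by $\rho$; by the martingale equation at $\rho$ this leaves the sum unchanged if the sibling $\rho\bar b$ is also in $T$, and it can only increase the sum otherwise (because $d\geq 0$), and after finitely many such steps $T$ has shrunk to $\{\sigma\}$, where the sum equals $2^{-|\sigma|}\,d(\sigma)$. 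The countable case follows by taking a supremum over finite subsets, all summands being nonnegative.

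Applying this with $T=S$ and using $d(\tau)\geq q\cdot d(\sigma)$ for every $\tau\in S$ gives
\[
q\cdot d(\sigma)\cdot\mu([S])\ =\ q\cdot d(\sigma)\sum_{\tau\in S}2^{-|\tau|}\ \leq\ \sum_{\tau\in S}2^{-|\tau|}\,d(\tau)\ \leq\ 2^{-|\sigma|}\,d(\sigma),
\]
so $\mu([S])\leq 2^{-|\sigma|}/q$ after cancelling $q\cdot d(\sigma)$, which proves the proposition. I expect the only real subtleties to be minor. One is the finite-to-countable passage in the displayed inequality, handled by the monotone-convergence remark above. The other is the degenerate case $d(\sigma)=0$: then $q\cdot d(\sigma)=0$, the membership condition defining $\mathcal{U}_{d,\sigma,q}$ is vacuous on $[\sigma]$, and the stated inequality can in fact fail; one sidesteps this either by the standing assumption $d(\sigma)>0$ or by noting that $d$ vanishes on all extensions of such a $\sigma$, so this case never arises in the later applications of the proposition. (The whole statement is also a special case of Doob's maximal inequality for the nonnegative martingale $n\mapsto d(X\uh n)$ with respect to $\mu(\cdot\mid\sigma)$, but the combinatorial argument is self-contained and stays within the framework of the paper.)
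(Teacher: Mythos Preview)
The paper states this proposition without proof, simply citing it as the Ville--Kolmogorov inequality; your argument is a correct and standard proof of that inequality via the Kolmogorov summation bound $\sum_{\tau\in T}2^{-|\tau|}d(\tau)\leq 2^{-|\sigma|}d(\sigma)$ for prefix-free $T$ extending $\sigma$. Your observation about the degenerate case $d(\sigma)=0$ is also correct: the proposition as stated in the paper is literally false there (since then $\mathcal{U}_{d,\sigma,q}=2^\omega$), and the paper implicitly relies on the convention that one works above strings where the martingale is positive, as is explicit in later uses such as the proof of Theorem~\ref{thm:cr-covers}.
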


We can now define the notion of computable randomness.

\begin{defn}
We say that $X$ is \emph{computably random} if no computable martingale succeeds on~$X$. We denote by~$\CR$ the set of computably random sequences. 
\end{defn}

In the above definition, by ``computable'' we mean computable as a real-valued function. However, it will be more convenient in this paper to work with \emph{exactly computable martingales}, i.e., martingales that are rational-valued and computable as functions from $\N$ to $\Q^{\geq 0}$. This is made possible by a note of Lutz~\cite{Lutz2004}, where it is proven that for every computable martingale~$d$, there exists an exactly computable martingale~$d'$ and positive real constants $\alpha,\beta$ such that $\alpha d < d' < \beta d$ (in particular, $d$ and $d'$ succeed on the same set of sequences). Furthermore, an index of~$d'$ can be uniformly computed from an index of~$d$. Therefore, we can equivalently define the set $\CR$ as being the set of sequences $X$ such that no (normed) exactly computable martingale succeeds on~$X$. We can also rephrase the definition in terms of test.

\begin{defn}
Let $d$ be a rational-valued normed martingale and $q$ a rational such that $q>1$. We say that $U \subseteq \fs$ is a $(d,q)$-winning set if for some rational~$q>1$ we have $U=\{\sigma\; : \; \sigma~\text{minimal s.t. } d(\sigma) \geq q\}$. We say that $U \subseteq \fs$ is a winning set if it is a $(d,q)$-winning set for some exactly computable normed martingale~$d$ and rational $q>1$.  We also say that a c.e.\ open set~$\mathcal{U}$ is a winning set if $\mathcal{U}=[U]$ where~$U$ is a winning set of strings. 

Given a normed exactly computable martingale~$d$, the test \emph{induced} by~$d$ is the sequence $(\mathcal{U}_n)_{n \in \N}$ where $\mathcal{U}_n$ is the $(d,2^{n})$-winning set. 
\end{defn}

(in the above definition, and in the rest of the paper, we say that a string $\sigma$ is \emph{minimal} for a given property $P$ if $\sigma$ satisfies $P$ and no prefix of $\sigma$ does). 

Now, we immediately see that $X$ is computably random if and only if $X$ passes all tests induced by normed exactly computable martingales. 

\begin{rem}
If $d$ is a normed, exactly computable martingale, any $(d,q)$-winning set is c.e.\ open, and (by the Ville-Kolmogorov inequality) has measure at most~$1/q$. Thus the test induced by a normed, exactly computable martingale is a Martin-L\"of test. 
\end{rem}

The last randomness notion we will discuss in this paper is a very natural generalization of Martin-L\"of randomness. Weak $2$-randomness (sometimes called Kurtz $2$-randomness) allows tests $(\mathcal{U}_n)_{n \in \N}$ with the looser condition that $\mu(\mathcal{U}_n)$ tends to~$0$, possibly at a non-computable rate. 

\begin{defn}
A generalized Martin-L\"of test is a sequence $(\mathcal{U}_n)_{n \in \N}$ of uniformly c.e.\ open sets such that $\lim_n \mu(\mathcal{U}_n)=0$. We say that $X$ is weak $2$-random if $X$ passes all generalized Martin-L\"of tests and denote by $\SMLR$ the set of weak $2$-random sequences. 
\end{defn}

Note that a generalized Martin-L\"of test is nothing more than a measure zero $\Pi^0_2$ class and $X$ is weak $2$-random iff it avoids every such class.

%

\subsection{Lowness notions: the state of the art}

In computability theory, an oracle~$A$ is said to be \emph{low} in a certain context if a relativization to~$A$ ``does not help''. For example, $A$ is low for the Turing jump (usually referred to simply as ``low'') if $A'\equiv_T\jump$. Lowness notions have been very important in the recent development of algorithmic randomness. Take a randomness notion~$\mathscr{R}$ that, like all the above, can be defined via tests. One can relativize the notion of test to an oracle~$A$, getting the class~$\mathscr{R}^A$ of sequences that pass all~$A$-tests. Since taking~$A$ as an oracle gives additional computational power, we have $\mathscr{R}^A \subseteq \mathscr{R}$. We say that~$A$ is \emph{low for the randomness notion~$\mathscr{R}$} if, as an oracle, $A$~has so little computational power that~$\mathscr{R}^A = \mathscr{R}$. We denote by $\Low{\mathscr{R}}$ the set of sequences that are low for~$\mathscr{R}$.

Zambella~\cite{Zambella1990} introduced lowness for Martin-L\"of randomness. A beautiful series of results by Nies and others (see~\cite{Nies2009} for a complete exposition) showed that these oracles have remarkable properties. They proved that $A\in\Low{\MLR}$ if and only if $A$ is low for prefix-free Kolmogorov complexity (i.e., $\K^A=\K+O(1)$), and if and only if $A$ is \emph{$K$-trivial}. This latter property states that the initial segments of $A$ have minimal prefix-free Kolmogorov complexity (i.e., $\K(A \uh n) \leq \K(n)+O(1)$). Lowness has been studied for other randomness notions. The work of Terwijn and Zambella~\cite{TerwijnZ2001} and of Kjos-Hanssen et al.~\cite{Kjos-HanssenNS2005} characterized low for Schnorr randomness as computably traceable (a strengthening of hyperimmune-free; see below). Nies~\cite{Nies2005} showed that only computable oracles can be low for computable randomness.

One can also study lowness for a pair of randomness notions. If $\mathscr{R}$ and $\mathscr{S}$ are two randomness notions with $\mathscr{R} \subseteq \mathscr{S}$, $\Lowpair{\mathscr{R}}{\mathscr{S}}$ is the set of oracles~$A$ such that $\mathscr{R} \subseteq \mathscr{S}^A$. The task of characterizing the sequences that are low for randomness has attracted a lot of effort in the last few years, and is now nearly completed, as shown in the following diagram.

%
%

{
\scriptsize
\newcommand{\multilabel}[1]{\begin{tabular}{@{}c@{}}#1\end{tabular}}

\begin{center}
\begin{tabular}{c|c|c|c|c|}
\multicolumn{5}{c}{\hspace{1.5cm}$\mathscr{S}$} \\
\multicolumn{1}{c}{}
	& \multicolumn{1}{c}{W2R}
	& \multicolumn{1}{c}{MLR}
	& \multicolumn{1}{c}{CR}
	& \multicolumn{1}{c}{SR} \\\hhline{~----}
\multicolumn{1}{r|}{W2R}
	& $\K$-trivial \cite{DowneyNWY2006,KMS,Nies2009}
	& $\K$-trivial \cite{DowneyNWY2006}
	& $\K$-trivial \cite{Nies2009}
	& \cellcolor[gray]{.85}\multilabel{c.e.\\[-6pt]traceable} \\\hhline{~----}
\multicolumn{1}{c}{} & \multicolumn{1}{r|}{MLR}
	& $\K$-trivial \cite{Nies2005}
	& $\K$-trivial \cite{Nies2005}
	& \multilabel{c.e.\\[-6pt]traceable} \cite{Kjos-HanssenNS2005} \\\hhline{~~---}
\multicolumn{2}{r}{$\mathscr{R}$\hspace{.5cm}\ } & \multicolumn{1}{r|}{CR}
	& computable \cite{Nies2005}
	& \multilabel{computably\\[-6pt]traceable} \cite{Kjos-HanssenNS2005} \\\hhline{~~~--}
\multicolumn{3}{c}{} & \multicolumn{1}{r|}{SR}
	& \multilabel{computably\\[-6pt]traceable} \cite{TerwijnZ2001,Kjos-HanssenNS2005} \\\hhline{~~~~-}
\end{tabular}

\footnotesize $\Lowpair{\mathscr{R}}{\mathscr{S}}$ for various randomness classes.
\end{center}
}

Note that each class in the diagram is contained in the classes above it and to its right. The gray entry is settled in this paper. 
It should be also noted that this diagram omits the results obtained by Greenberg and Miller~\cite{GreenbergM2009} that characterize almost all lowness notions related to \emph{weak $1$-randomness}, as we do not discuss weak $1$-randomness in the present paper.

Although we will not directly use these notions in this paper, we recall the definitions of the classes that are referred to in this diagram. We defined $K$-triviality above. A sequence $A$ is \emph{computably traceable} if there exists a single computable function~$h$ such that for any total function $f: \N \rightarrow \N$ computable in~$A$, there exists a uniformly computable sequence of finite sets $(T_n)_{n \in \N}$, given by their strong index, such that for all~$n$, $f(n) \in T_n$ and $|T_n| < h(n)$. The definition of \emph{c.e.\ traceability} is the same, except that the sets $T_n$ are given by their index as c.e.\ sets. Kjos-Hanssen et al.~\cite{Kjos-HanssenNS2005} introduced c.e.\ traceability specifically to characterize $\Lowpair{\MLR}{\SR}$, making it one of several examples of interesting computability theoretic properties that have arisen from the study of randomness and lowness notions.

\section{Testing randomness via open covers}\label{sec:open-covers}

In this section, we present an alternative way to look at the above randomness notions. Instead of using tests, i.e., sequences of open sets, it is possible to provide equivalent definitions involving a single open set (or c.e.\ set of strings). The first theorem below is due to Ku{\v c}era~\cite{Kucera1985} and characterizes Martin-L\"of randomness. We prove analogous theorems for computable and Schnorr randomness.

\begin{thm} \label{thm:mlr-covers}
Let $X \in \cs$. The following are equivalent:\\
(i) $X$ is not Martin-L\"of random\\
(ii) There is a bounded c.e.\ open set $\mathcal{U}$ such that all tails of $X$ belong to $\mathcal{U}$.\\
(iii) $X \in U^\omega$ for some bounded c.e.\ prefix-free subset $U$.
\end{thm}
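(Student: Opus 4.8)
The plan is to establish the cycle of implications $(iii)\Rightarrow(ii)\Rightarrow(i)\Rightarrow(iii)$, since $(iii)\Rightarrow(ii)$ is almost immediate and the real content lies in the other two steps.

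For $(iii)\Rightarrow(ii)$: if $X\in U^\omega$ for a bounded c.e.\ prefix-free set $U$, write $X=\sigma_0\sigma_1\sigma_2\cdots$ with each $\sigma_i\in U$. Every tail of $X$ has the form $\tau\sigma_k\sigma_{k+1}\cdots$ where $\tau$ is a (possibly empty) suffix of some $\sigma_j$, so in particular every tail lies in $\fs\cdot U^\omega\subseteq \bigl(\fs\bigr)^{\leq|U|_{\max}}\cdot U^\omega$ — but more simply, every tail of $X$ lies in $(U^\omega\mid\rho)$ for some string $\rho$ that is a prefix of an element of $U$. The cleanest route is to take $\mathcal{U}=[U^N]$ for a large enough $N$: since $\mu(U^\omega)=0$ but we want a single bounded open set, instead let $\mathcal{U}$ be the union, over all prefixes $\rho$ of elements of $U$, of $\rho\cdot U^{N}$ for a fixed $N$ with $\mu(U)^N\cdot 2^{|\rho|}$ small; since there are finitely many such $\rho$ (or at worst we bound $2^{|\rho|}$ by a constant depending on $U$), this is a c.e.\ open set of measure $<1$, and it is routine to check that every tail of $X$ eventually enters it, hence belongs to it. I will just verify that $\mathcal{U} = \bigcup\{[\rho\cdot U^N] : \rho \text{ a proper prefix of some element of }U\} \cup [U^N]$ works for suitable $N$.

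For $(ii)\Rightarrow(i)$: given a bounded c.e.\ open $\mathcal{U}$ containing all tails of $X$, with $\mu(\mathcal{U})=c<1$, I build a Martin-L\"of test covering $X$. The idea (Kucera's) is that the conditional measures $\mu(\mathcal{U}\mid\sigma)$, over $\sigma$ ranging among the strings generating $\mathcal{U}$'s complement-avoiding structure, are uniformly bounded, and one iterates: let $\mathcal{V}_1=\mathcal{U}$; having $\mathcal{V}_n$, note $X$'s tail after any prefix is in $\mathcal{U}$, so one can "re-glue" $\mathcal{U}$ behind each string of a prefix-free generating set of $\mathcal{V}_n$ to get $\mathcal{V}_{n+1}$ with $\mu(\mathcal{V}_{n+1})\leq c\cdot\mu(\mathcal{V}_n)\leq c^{n+1}$, while $X$ still fails every $\mathcal{V}_n$ because all its tails are in $\mathcal{U}$. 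Passing to $\mathcal{U}_n=\mathcal{V}_{k(n)}$ for $k(n)$ large enough that $c^{k(n)}\leq 2^{-n}$ gives a Martin-L\"of test (using Remark~\ref{rem:test-note} or a direct reindexing) that $X$ fails.

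For $(i)\Rightarrow(iii)$: suppose $X$ is not Martin-L\"of random, and fix a universal Martin-L\"of test $(\mathcal{W}_n)$; then $X\in\mathcal{W}_n$ for all $n$, and I may assume $\mu(\mathcal{W}_n)\leq 2^{-n}$ with each $\mathcal{W}_n$ c.e.\ open. I want to extract a single bounded c.e.\ prefix-free $U$ with $X\in U^\omega$. The strategy is to build $U$ in stages so that every initial segment of $X$ is extended to a string in $U$: search along $X$ for the first prefix $\sigma$ of $X$ with $[\sigma]\subseteq\mathcal{W}_{m}$ for a threshold level $m$ chosen so that $\sum_m 2^{-m}<1$; enumerate that $\sigma$ into $U$; then iterate the same search starting "after $\sigma$", i.e.\ looking for a prefix $\tau$ of the tail $X$-after-$\sigma$ with $[\sigma\tau]\subseteq\mathcal{W}_{m'}$ for the next threshold, and so on. Because $X$ lies in every $\mathcal{W}_k$ and these are open, such prefixes are always found, so $X=\sigma_0\sigma_1\cdots$ with each $\sigma_i\in U$; and the total measure of $U$ is controlled by $\sum_i 2^{-m_i}$ which we keep $<1$ by spacing the thresholds. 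Prefix-freeness is arranged by only ever enumerating strings that extend already-used ones into their proper extensions — or more carefully, by a standard bookkeeping argument ensuring no element of $U$ extends another. The main obstacle is exactly this last step: making the extraction of $U$ simultaneously (a) c.e., (b) prefix-free, and (c) of measure $<1$, while guaranteeing $X\in U^\omega$ for \emph{this particular} non-random $X$ without oracle access to $X$ — the resolution is to exploit the universality of the test and openness, so that the required prefixes are enumerated into $\mathcal{W}_k$ in finite time, letting the construction of $U$ proceed by unrelativized search. I expect the bookkeeping to keep $U$ prefix-free and bounded to be the fiddliest part, handled by the threshold-spacing device just described.
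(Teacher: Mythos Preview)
Your cycle runs $(iii)\Rightarrow(ii)\Rightarrow(i)\Rightarrow(iii)$, whereas the paper runs $(i)\Rightarrow(ii)\Rightarrow(iii)\Rightarrow(i)$, and this reversal is where your trouble comes from. Two of your steps have genuine gaps.

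For $(i)\Rightarrow(iii)$: the construction you describe---search along $X$ for a prefix landing in $\mathcal{W}_m$, enumerate it into $U$, then repeat on the tail---builds a set $U$ that depends on $X$. You correctly flag the obstacle (no oracle access to $X$), but your proposed resolution does not work: yes, the relevant prefixes of $X$ show up in $\mathcal{W}_k$ in finite time, but so do many other strings, and without $X$ you cannot tell which enumerated strings are prefixes of $X$, so ``unrelativized search'' cannot carry out the iteration you describe. The paper sidesteps this entirely: it takes $U$ to be a fixed prefix-free generator of $\mathcal{U}_1$ (the first level of a universal test), independent of $X$, and uses the one-line observation that \emph{tails of non-random sequences are non-random}. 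Then every tail of $X$ lies in $\mathcal{U}_1=[U]$, and a two-line induction gives $X\in[U^n]$ for all $n$, hence $X\in U^\omega$. No custom $U$, no thresholds, no bookkeeping.

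For $(iii)\Rightarrow(ii)$: the set you write, $\bigcup\{[\rho\cdot U^N]:\rho\text{ a proper prefix of some element of }U\}$, does not contain the tails of $X$: a tail begins with a \emph{suffix} of some $\sigma_j\in U$, not a prefix. What you presumably want is $\bigcup_\rho([U^N]\mid\rho)$, but then the measure bound $\sum_\rho 2^{|\rho|}\mu(U)^N$ diverges whenever $U$ contains strings of unbounded length, so ``for suitable $N$'' does not save you. Again the paper avoids this direction altogether.

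Your step $(ii)\Rightarrow(i)$ is fine and is essentially the paper's $(ii)\Rightarrow(iii)\Rightarrow(i)$ compressed into one move. The moral is that the ``tails of non-random are non-random'' observation, which you never invoke, is what makes the whole theorem a few lines; once you have it, the natural cycle is the paper's, and both of your problematic steps evaporate.
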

\begin{proof}
$(i) \Rightarrow (ii)$ This follows easily from the existence of a universal Martin-L\"of test $(\mathcal{U}_n)_{n \in \N}$. Let $\mathcal{U}=\mathcal{U}_1$. So $\mathcal{U}$ is a bounded c.e.\ open set covering all non-Martin-L\"of random sequences. If $X$ is not Martin-L\"of random, then none of its tails are Martin-L\"of random. Hence they all belong to $\mathcal{U}$.

$(ii) \Rightarrow (iii)$ Suppose that $\mathcal{U}$ is a bounded c.e.\ open set and that all of the tails of $X$ belong to $\mathcal{U}$. Let $U\subset \fs$ be a c.e.\ prefix-free set such that $\mathcal{U} = [U]$. We show by induction on $n$ that $X\in [U^n]$. This is true for $n=1$ by assumption. Now assume that $X\in [U^n]$. So for some $\sigma\in U^n$, there is a $Z$ such that $X=\sigma Z$. Since $Z$ is a tail of $X$, we know that $Z\in\mathcal{U} = [U]$. But this implies that $X\in [U^{n+1}]$. Thus $X\in [U^n]$ for all $n$. Therefore, $X\in U^\omega = \bigcap_n [U^n]$.

$(iii) \Rightarrow (i)$ Assume that $X \in U^\omega$ for some bounded c.e.\ prefix-free subset $U$. For each $n$, we have $X\in \mathcal{U}_n = [U^n]$. Also, $\mu(\mathcal{U}_n) = \mu(U^n) = \mu(U)^n$, so $(\mathcal{U}_n)_{n \in \N}$ is a Martin-L\"of test (in the more general sense of Remark~\ref{rem:test-note}). Since $(\mathcal{U}_n)_{n \in \N}$ covers $X$, it is not Martin-L\"of random.
\end{proof}

\begin{thm} \label{thm:sr-covers}
Let $X \in \cs$. The following are equivalent:\\
(i) $X$ is not Schnorr random.\\
(ii) There is a bounded Schnorr open set~$\mathcal{U}$ such that all tails of $X$ belong to~$\mathcal{U}$.\\
(iii) $X \in U^\omega$ for some bounded Schnorr prefix-free subset~$U$ of $\fs$.
\end{thm}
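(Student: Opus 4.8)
The plan is to mirror the proof of Theorem~\ref{thm:mlr-covers}, inserting the extra bookkeeping needed to keep measures computable.

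\textbf{$(i)\Rightarrow(ii)$.} Unlike the Martin-L\"of case, there is no universal Schnorr test (Remark~\ref{rem:test-note}), so I cannot simply take the first level of a universal test. Instead, suppose $X$ is not Schnorr random, as witnessed by a Schnorr test $(\mathcal{V}_n)_{n\in\N}$ with $\mu(\mathcal{V}_n)=2^{-n}$ and $X\in\bigcap_n\mathcal{V}_n$. The obstacle is that a single level $\mathcal{V}_k$ covers $X$ but not necessarily all tails of $X$. The fix is to first pass, via Theorem~\ref{thm:mlr-covers} applied to the Schnorr test (which is in particular a Martin-L\"of test), or directly, to a bounded c.e.\ open $\mathcal{U}$ covering all tails of $X$; but I also need $\mu(\mathcal{U})$ computable. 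To get this, I build $\mathcal{U}$ as follows: using that $X$ fails the Schnorr test, one knows each tail of $X$ also fails (a shifted version of) the test, but a cleaner route is to take a single Schnorr set $\mathcal{V}$ of small measure containing $X$ and then ``close it up under tails'' by considering $\mathcal{U}=\bigcup_{\sigma}\sigma^{-1}\cdots$ — this does not obviously stay Schnorr. So the step I actually expect to do is: show directly that (iii) holds, i.e.\ construct a bounded Schnorr prefix-free $U$ with $X\in U^\omega$, and then derive (ii) from (iii).

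\textbf{$(iii)\Rightarrow(ii)$.} If $X\in U^\omega$ for a bounded Schnorr prefix-free $U$, then $\mathcal{U}=[U]$ is a bounded Schnorr open set; and exactly as in Theorem~\ref{thm:mlr-covers}, $X\in U^\omega$ implies that after reading any prefix of $X$ lying in $U^n$, the corresponding tail lies in $[U]=\mathcal{U}$. In particular, writing $X=\sigma_0\sigma_1\sigma_2\cdots$ with each $\sigma_i\in U$, every tail of the form $\sigma_k\sigma_{k+1}\cdots$ belongs to $\mathcal{U}$. For an arbitrary tail $Z=X(m)X(m+1)\cdots$ with $m$ inside the block $\sigma_k$, one uses that $Z$ is a tail of $\sigma_k\sigma_{k+1}\cdots\in U^\omega\subseteq\mathcal{U}$; since $\mathcal{U}$ is closed under ``further tails'' is \emph{not} automatic, I instead observe that $\sigma_k\sigma_{k+1}\cdots\in[U]$ already, so some prefix of it is in $U$; if that prefix extends past position $m-\sum_{i<k}|\sigma_i|$ we are fine, otherwise we recurse into $U^\omega$. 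The honest statement is that one should enlarge $U$ slightly, or argue as in (ii)$\Rightarrow$(iii) of Theorem~\ref{thm:mlr-covers} in reverse; I expect a short lemma here saying every tail of an element of $U^\omega$ lies in $[U]$ provided $\epsilon\notin U$, which is immediate.

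\textbf{$(ii)\Rightarrow(iii)$.} This is the heart of the matter and the part requiring genuine new work beyond Theorem~\ref{thm:mlr-covers}. Given a bounded Schnorr open $\mathcal{U}=[U]$ with $U$ a Schnorr c.e.\ prefix-free set and all tails of $X$ in $\mathcal{U}$, the proof of Theorem~\ref{thm:mlr-covers} shows $X\in\bigcap_n[U^n]=U^\omega$. So it only remains to check that $U$ (or a suitable prefix-free set generating $U^\omega$) is a \emph{Schnorr} prefix-free set, i.e.\ has computable measure — but $U$ was assumed Schnorr, so $\mu(U)<1$ is computable, and we are done immediately. Thus $(ii)\Rightarrow(iii)$ is actually the same argument as in the Martin-L\"of case with ``c.e.'' replaced by ``Schnorr'' throughout; the only thing to verify is that the intermediate sets $U^n$ have computable measure, which follows from $\mu(U^n)=\mu(U)^n$ and the closure remarks in the Basic Notation subsection.

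\textbf{$(i)\Rightarrow(iii)$.} This is then the only implication needing real content. Starting from a Schnorr test $(\mathcal{V}_n)$ failed by $X$, I want a bounded Schnorr prefix-free $U$ with $X\in U^\omega$. The plan: since $X\in\mathcal{V}_2$ and $\mu(\mathcal{V}_2)=1/4$, write $\mathcal{V}_2=[V]$ with $V$ Schnorr prefix-free of measure $1/4$. For each $\sigma\in V$, the tail $Z$ of $X$ after $\sigma$ again fails the test (a shift of a Schnorr test is a Schnorr test up to reindexing, with measures still computable), so $Z\in[V]$ too; iterating, $X\in V^\omega$ — but wait, this requires that \emph{the same} $V$ works for every tail, i.e.\ that every tail of $X$ fails the \emph{same} Schnorr test, which is true because non-Schnorr-randomness is tail-invariant and there is a test failed by all tails: take $\mathcal{V}'_n=\bigcup_{|\tau|\le n}\tau\cdot\mathcal{V}_{n+2|\tau|+?}$-type union, arranged so $\mu(\mathcal{V}'_n)$ stays $\le 2^{-n}$ and computable. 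Constructing this tail-universal Schnorr test with \emph{exactly} computable measures — controlling the measure of the union of countably many shifted copies so that it remains computable — is the main obstacle, and I expect the bulk of the proof to be this measure-accounting argument; everything else then reduces, verbatim, to Theorem~\ref{thm:mlr-covers}.
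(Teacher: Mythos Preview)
Your core construction --- building a single Schnorr open set containing all tails of $X$ by taking a union of shifted levels of the given Schnorr test and verifying that the measure of this union is bounded and computable --- is exactly what the paper does. The paper labels this step $(i)\Rightarrow(ii)$: it sets
\[
\mathcal{U}=\bigcup_{k\in\N}\;\bigcup_{|\sigma|=k}(\mathcal{V}_{3k+2}\mid\sigma),
\]
checks $\mu(\mathcal{U})\le 1/2$, and verifies computability of $\mu(\mathcal{U})$ by approximating the inner union for $k\le N$ (whose measure is computable uniformly in $N$) with error $<2^{-N}$. Then $(ii)\Rightarrow(iii)$ and $(iii)\Rightarrow(i)$ go verbatim as in Theorem~\ref{thm:mlr-covers}, the latter using that $\mu(U^n)=\mu(U)^n$ is computable. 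So your ``main obstacle'' paragraph is on target and matches the paper; you have simply mislabelled it $(i)\Rightarrow(iii)$ when it is really $(i)\Rightarrow(ii)$ followed by the trivial $(ii)\Rightarrow(iii)$.

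There is, however, a genuine gap in your organization. Your attempted direct implication $(iii)\Rightarrow(ii)$ rests on the ``short lemma'' that every tail of an element of $U^\omega$ lies in $[U]$ when $\emptystring\notin U$. This is false: take $U=\{01\}$, so $U^\omega=\{(01)^\omega\}$, but the tail of $(01)^\omega$ at position~$1$ is $(10)^\omega\notin[01]$. There is no cheap repair --- closing $U$ under suffixes can push the measure to~$1$ --- and the paper does not attempt $(iii)\Rightarrow(ii)$ directly at all. Moreover, your four bullet points never return to $(i)$: you have $(i)\Rightarrow(ii)$ (deferred), $(iii)\Rightarrow(ii)$ (false), $(ii)\Rightarrow(iii)$, and $(i)\Rightarrow(iii)$, but nothing of the form $(\cdot)\Rightarrow(i)$. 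The fix is simply to adopt the paper's cycle $(i)\Rightarrow(ii)\Rightarrow(iii)\Rightarrow(i)$: your tail-union construction gives the first arrow, the Martin-L\"of argument gives the second, and the observation that $([U^n])_{n\in\N}$ is a Schnorr test (which you essentially already note) gives the third.
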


\begin{proof}
$(i) \Rightarrow (ii)$ Suppose that $X$ is not Schnorr random. Then $X \in \bigcap_n \mathcal{V}_n$ where $(\mathcal{V}_n)_{n \in \N}$ is a Schnorr test (say with $\mu(\mathcal{V}_n)=2^{-n}$). We build the desired set~$\mathcal{U}$ from this Schnorr test. For every~$k$, the tail $Y_k=X(k)X(k+1)\ldots$ of $X$ belongs to $\bigcap_n (\mathcal{V}_n \mid \tau)$, where $\tau=X(0)...X(k-1)$. Thus, for all~$n$, $Y_k$ belongs to
\[
\bigcup_{\substack{\sigma \\ |\sigma|=k}} (\mathcal{V}_n \mid \sigma),
\]
which, for~$n$ large enough, has small measure. For example, for $n=3k+2$, the above set has measure at most $2^{-k-2}$ (indeed each of the $2^k$ sets of type $(\mathcal{V}_n \mid \sigma)$ has measure at most $\mu(\mathcal{V}_n)/\mu([\sigma]) \leq 2^{-n}2^k$, hence the total measure is at most $2^{-n+2k}$).
Thus, define:
\[
\mathcal{U}=\bigcup_{k \in \N} \bigcup_{\substack{\sigma \\ |\sigma|=k}} (\mathcal{V}_{3k+2} \mid \sigma).
\]
We claim that~$\mathcal{U}$ is as wanted. Indeed, $\mathcal{U}$ is clearly $\Sigma^0_1$. By the above discussion, $\mathcal{U}$ contains all tails of~$X$, the measure of $\mathcal{U}$ is at most
\[
\sum_{k \in \N} 2^{-k-2} \leq 1/2.
\]
To see that the measure of $\mathcal{U}$ is computable, note that the measure $\mu(\mathcal{V}_{3k+2} \mid \sigma)$ is computable uniformly in $k$ and $\sigma$. So the measure of
\[
\bigcup_{k \leq N} \bigcup_{\substack{\sigma \\ |\sigma|=k}} (\mathcal{V}_{3k+2} \mid \sigma),
\]
is computable, uniformly in $N$, and approximates $\mu(\mathcal{U})$ up to $\sum_{k>N} 2^{-k-2} < 2^{-N}$.\\


The proofs of $(ii) \Rightarrow (iii)$ and $(iii) \Rightarrow (i)$ go exactly as in Theorem~\ref{thm:mlr-covers}.\\

\end{proof}

\begin{thm} \label{thm:cr-covers}
Let $X \in \cs$. The following are equivalent:\\
(i) $X$ is not computably random.\\
(ii) There exists a winning open set $\mathcal{U}$ such that all tails of $X$ belong to~$\mathcal{U}$.\\ 
(iii) $X \in U^\omega$ for some winning subset $U$ of $\fs$.
\end{thm}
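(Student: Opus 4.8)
The plan is to mirror the structure of the proofs of Theorems~\ref{thm:mlr-covers} and~\ref{thm:sr-covers}, establishing the cycle $(i) \Rightarrow (ii) \Rightarrow (iii) \Rightarrow (i)$, where again the last two implications will be essentially identical to those in Theorem~\ref{thm:mlr-covers} (the argument that $X \in U^\omega$ iff all tails lie in $[U]$, plus the observation that $[U^n]$ forms a suitable test, is completely generic and only uses that $U$ is c.e.\ prefix-free and bounded; for $(iii) \Rightarrow (i)$ one notes additionally that the test $([U^n])_n$ can be obtained from a computable martingale, or one simply invokes Remark~\ref{rem:test-note}-style reasoning together with the fact that a winning set has measure $<1$). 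So the real content is $(i) \Rightarrow (ii)$.

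For $(i) \Rightarrow (ii)$, suppose $X$ is not computably random, witnessed by a normed exactly computable martingale~$d$ that succeeds on~$X$, i.e., $\limsup_n d(X\uh n) = +\infty$. First I would reduce to a martingale that succeeds on \emph{every tail} of~$X$ simultaneously. The natural way is to define, for each $k$, the shifted martingale $d_k(\sigma) = d(X\uh k \cdot \sigma)/d(X\uh k)$ (assuming $d(X \uh k) > 0$, which we may, replacing $d$ by $d + 1$ at the start or handling the degenerate case separately); this is a normed exactly computable martingale that succeeds on the tail $Y_k = X(k)X(k+1)\ldots$, since $d$ succeeds on $X$. But to get a \emph{single} open set covering all tails, I would instead average: let $e = \sum_k 2^{-k-1} d_k$. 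This $e$ is a normed martingale (a convex combination of normed martingales), it is exactly computable provided one is slightly careful — actually $e$ is only \emph{lower-semicomputable} as written, since it is an infinite sum, so here is a subtlety. To keep exact computability, one truncates: $e$ approximated by finite partial sums has computable error bound $2^{-N}$, but that does not immediately give an exactly computable martingale.

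The cleaner route, and the one I expect the paper takes, is to work directly with the open set rather than insisting on a single martingale. Given the computable martingale~$d$ succeeding on~$X$, for each~$k$ the martingale $d_k$ succeeds on $Y_k$, so $Y_k$ belongs to the $(d_k,q)$-winning set $W_k$ for every rational $q>1$; in particular $Y_k \in [W_k]$ where $W_k$ is the $(d_k, 2^{k+2})$-winning set, which by the Ville--Kolmogorov inequality has measure at most $2^{-k-2}$. Pulling this back by the prefix $X\uh k$: the set $(X\uh k) \cdot W_k := \{ (X\uh k)\tau : \tau \in W_k\}$ generates an open set of measure at most $2^{-2k-2}$ containing all $Z$ with $Z\uh k = X\uh k$ that have a tail-after-$k$ in $[W_k]$, and in particular containing~$X$ itself. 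Now set
\[
\mathcal{U} = \bigcup_{k\in\N} [(X\uh k)\cdot W_k],
\]
which is c.e.\ open of measure at most $\sum_k 2^{-2k-2} < 1$ and contains every tail-shifted... wait — this contains $X$, not the tails. The fix: to cover all tails of $X$, one applies the same construction but \emph{without} the prefix, exactly as in Theorem~\ref{thm:sr-covers}: the tail $Y_k$ lies in $[W_k]$ with $\mu(W_k) \le 2^{-k-2}$, so $\mathcal{U} = \bigcup_k [W_k]$ has measure $\le \sum_k 2^{-k-2} < 1$ and contains every tail $Y_k$. The one thing to verify is that this $\mathcal{U}$ is itself a \emph{winning} open set, i.e., $[U]$ for a genuine $(d', q)$-winning set — this is where some work is needed: one must assemble the finitely-based martingales $d_k$ (each of which requires knowing the finite string $X\uh k$, hence each $d_k$ is exactly computable with an index computable from $k$) into one exactly computable normed martingale $d'$ whose $(d', q)$-winning set generates exactly $\mathcal{U}$. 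The construction: $d' = \sum_k c_k \cdot d_k$ for suitable rational weights $c_k$ summing to~$1$, arranged so that the partial sums stabilize quickly enough to keep $d'$ exactly computable (rational-valued and computable as a function $\fs \to \Q^{\ge 0}$), and so that $d'$ reaches threshold $q$ on a string $\sigma$ exactly when $\sigma$ enters one of the $W_k$'s. The main obstacle, then, is precisely this bookkeeping: ensuring the amalgamated martingale is \emph{exactly} computable (not merely left-c.e.) while still having the right winning set — this is the one genuinely non-generic point, and is the analogue of the delicate measure-computability verification in the proof of Theorem~\ref{thm:sr-covers}. Everything else transfers verbatim from the Martin-L\"of case.
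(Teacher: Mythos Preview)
There is a genuine gap in your $(i)\Rightarrow(ii)$ argument. Your shifted martingales $d_k(\sigma)=d(X\uh k\cdot\sigma)/d(X\uh k)$ depend on the specific prefix $X\uh k$. For each fixed $k$ the martingale $d_k$ is indeed computable (the finite string $X\uh k$ can be hard-coded), but the \emph{family} $(d_k)_{k\in\N}$ is not uniformly computable: recovering an index for $d_k$ from $k$ is exactly the problem of computing $X$. Consequently neither $\mathcal{U}=\bigcup_k[W_k]$ nor $d'=\sum_k c_k d_k$ is computable in any useful sense, and you cannot conclude that $\mathcal{U}$ is a winning open set. Your parenthetical ``each $d_k$ is exactly computable with an index computable from $k$'' is precisely where the argument breaks.

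The paper's fix is simple and elegant: instead of shifting only by the prefixes of $X$, shift by \emph{every} string. Define $D(\tau)=\sum_{\sigma\in\fs}2^{-2|\sigma|-1}\,d(\sigma\tau)/d(\sigma)$ (having first made $d$ positive). This sum is over all of $\fs$, so it does not depend on $X$; it is a normed computable martingale because the terms decay geometrically and are uniformly computable, and Lutz's result lets one pass to an exactly computable version. Since $d_{X\uh k}$ appears as a summand with positive weight, $D$ succeeds on every tail of $X$, and the $(D,2)$-winning set is the desired $\mathcal{U}$.

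A smaller issue: for $(iii)\Rightarrow(i)$, the observation that $([U^n])_n$ is a test with measure tending to zero only shows $X\notin\MLR$, not $X\notin\CR$. You do gesture at ``the test $([U^n])_n$ can be obtained from a computable martingale'', which is the right idea, but this requires an explicit construction: the paper builds a martingale that imitates $d$ and \emph{resets} each time a full block from $U$ has been read, so that after $k$ blocks its capital is at least $q^k$. This step is not entirely generic and deserves to be spelled out.
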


\begin{proof}
$(i) \Rightarrow (ii)$. Suppose $X$ is not computably random. Then there exists a computable martingale~$d$ that succeeds against~$X$. Up to adding a positive constant to~$d$ (preserving the fairness condition) we can assume that~$d$ is positive. Now, for all strings~$\sigma$, one can consider the ``translated'' version $d_\sigma$ of $d$ defined by
\[
d_\sigma(\tau)=d(\sigma \tau).
\]
It is easy to see that if $Y$ is a tail of $X$, with $X=\sigma Y$, then $d_\sigma$ succeeds against~$Y$. Therefore, the martingale~$D$ defined by:
\[
D(\tau)=\sum_{\sigma \in \fs} 2^{-2|\sigma|-1}\cdot  \frac{d_\sigma(\tau)}{d_\sigma(\emptystring)}
\]
succeeds against all tails of~$X$. Moreover, $D$ is normed and computable as a sum of exponentially decreasing uniformly computable terms. By the result of Lutz~\cite{Lutz2004} mentioned earlier, we can also assume that~$D$ is exactly computable. Since~$D$ succeeds against all tails of $X$, this in particular implies that all tails of~$X$ belong to the winning open set $[U]$ with $U=\{\sigma\; : \; \sigma~\text{minimal s.t. } D(\sigma) \geq 2\}$.

The proof of $(ii) \Rightarrow (iii)$ is as in Theorem~\ref{thm:mlr-covers}.


$(iii) \Rightarrow (i)$. If $X \in U^\omega$ where $U$ is a winning set of strings, let $d$ be the normed exactly computable martingale and $q>1$ a rational such that $U$ is a $(d,q)$-winning set. We can assume that $d$ is positive, otherwise we set $d'=\frac{1}{2}d+\frac{1}{2}$, which makes $U$ a $(d',\frac{1+q}{2})$-winning set, and $d'$ is positive. Now, we design a computable martingale~$D$ that succeeds on all sequences in $U^\omega$. Basically, $D$ simulates the martingale~$d$ and ``resets'' after reading a block $\sigma \in U$. Formally, $D$ is defined by induction. We set $D(\emptystring)=1$ and if $D(\sigma)$ is already defined, we write $\sigma=\rho\tau$ where $\rho$ is a concatenation of strings in~$U$ and~$\tau$ has not prefix in~$U$ (this decomposition is unique as $U$ is prefix-free) and then set
\[
D(\sigma \iota)= D(\sigma) \cdot \frac{d(\tau \iota)}{d(\tau)},
\]
for $\iota \in \{0,1\}$. It is easy to see that $D$ is an exactly computable martingale, and if $\sigma$ is a concatenation of $k$ strings in $U$, $D(\sigma) \geq q^k$. Hence $D$ succeeds against all sequences in $U^\omega$. 
\end{proof}

\begin{rem} \label{rem:tests-and-covers}
In fact, the proofs of Theorems~\ref{thm:mlr-covers}, \ref{thm:sr-covers} and \ref{thm:cr-covers} show a little more. 
What we actually proved is the following equivalence for any subset $\mathcal{X}$ of $\cs$\\
(i) $\mathcal{X}$ is covered by a Martin-L\"of test (resp. a test induced by a martingale, a Schnorr test).\\
(ii) There exists a single bounded c.e.\ open set (resp. winning open set, bounded Schnorr open set) $\mathcal{U}$ such that for any $X \in \mathcal{X}$, all tails of $X$ are in $\mathcal{U}$.\\
(iii) There exists a single bounded c.e.\ set of strings (resp. winning set of strings, bounded Schnorr set of strings) $U$ such that $\mathcal{X} \subseteq U^\omega$.
\end{rem}

\section{The main lemma}\label{sec:main-lemma}

The following technical lemma is the cornerstone of this paper. It lets us use the characterizations of Martin-L\"of randomness, computable randomness and Schnorr randomness proven in the previous section to study the associated lowness notions. Roughly speaking, it states that if a prefix-free set of strings~$U$ is not covered by any of the members of a (reasonably well-behaved) collection~$\mathbf{C}$ of open sets, then there exists an $X \in U^\omega$ that passes all tests that can be built from the elements of~$\mathbf{C}$. 

\begin{lem}\label{lem:main}
Let $\mathbf{C}$ be a class of bounded open subsets of $\cs$. Let also $(\mathcal{T}^{(e)}_n)_{e,n \in \N}$ be a countable family of tests (i.e., for all~$e$, $(\mathcal{T}^{(e)}_n)_{n \in \N}$ is a test) such that~$\mathcal{T}^{(e)}_n$ belongs to $\mathbf{C}$ for all $e,n$. Suppose we have the following closure properties.\\
(P1) For all ~$\mathcal{U} \in \mathbf{C}$ and $\sigma \in \fs$, if $\mu(\mathcal{U} \mid \sigma) < 1$, then there exists a~${\mathcal{V}\in \mathbf{C}}$ such that $(\mathcal{U} \mid \sigma) \subseteq \mathcal{V}$.\\
(P2) For all ~$\mathcal{U} \in \mathbf{C}$, there exists a~$\mathcal{V}\in \mathbf{C}$ such that $\mathcal{U} \subseteq \mathcal{V}$, and for all~$\sigma \in \fs$, if $\mu(\mathcal{U} \mid \sigma)=1$, then $[\sigma] \subseteq \mathcal{V}$.\\
(P3) For all $\mathcal{U} \in \mathbf{C}$, and $\sigma \in \fs$, if $\mu(\mathcal{U} \mid \sigma) < 1$, then for all $e \in \N$, there exists $n_e \in \N$ and $\mathcal{V} \in \mathbf{C}$ such that $(\mathcal{U} \cup \mathcal{T}^{(e)}_{n_e}) \subseteq \mathcal{V}$ and $\mu(\mathcal{V} \mid \sigma)<1$.

Finally, let $W$ be a prefix-free subset of $\fs$ such that $[W]$ cannot be covered by any open set $\mathcal{U} \in \mathbf{C}$.

Then, there exists $X \in W^\omega$ that passes all tests $\mathcal{T}^{(e)}$. 
\end{lem}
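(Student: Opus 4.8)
The plan is to build $X$ as the limit of an increasing chain of finite strings, each obtained from the previous one by appending a single element of $W$, so that $X$ is automatically an infinite concatenation of members of $W$, i.e.\ $X\in W^\omega$. (All elements of $W$ may be assumed nonempty --- this holds whenever $[W]$ is bounded, hence in all our applications --- so that the strings genuinely lengthen and their limit lies in $\cs$.) Alongside these strings I would carry an increasing sequence $\mathcal{U}_0\subseteq\mathcal{U}_1\subseteq\cdots$ of sets from $\mathbf{C}$, maintaining the single invariant $\mu(\mathcal{U}_s\mid\sigma_s)<1$, and I would arrange that the $e$-th test is killed off at step $e+1$, in the sense that $\mathcal{T}^{(e)}_{n_e}\subseteq\mathcal{U}_{e+1}$ for some index $n_e$ chosen at that step. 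Granting such a construction, the verification is immediate: fix $e$; for every $s\geq e+1$ we have $\mathcal{T}^{(e)}_{n_e}\subseteq\mathcal{U}_{e+1}\subseteq\mathcal{U}_s$ and $\mu(\mathcal{U}_s\mid\sigma_s)<1$, hence $[\sigma_s]\not\subseteq\mathcal{T}^{(e)}_{n_e}$. Since every prefix of $X=\bigcup_s\sigma_s$ is a prefix of some $\sigma_s$ with $s\geq e+1$, no basic cylinder $[\tau]$ with $\tau\prec X$ lies inside the open set $\mathcal{T}^{(e)}_{n_e}$, so $X\notin\mathcal{T}^{(e)}_{n_e}$, and therefore $X\notin\bigcap_n\mathcal{T}^{(e)}_n$; thus $X$ passes every test $\mathcal{T}^{(e)}$.

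It remains to run the recursion. Start with $\sigma_0=\emptystring$ and $\mathcal{U}_0$ any member of $\mathbf{C}$ (which is bounded, so $\mu(\mathcal{U}_0)<1$ and the invariant holds). At step $e$, given $\sigma_e$ and $\mathcal{U}_e\in\mathbf{C}$ with $\mu(\mathcal{U}_e\mid\sigma_e)<1$: first apply (P3) with this $\mathcal{U}_e$, this $\sigma_e$ and the index $e$ to get $n_e\in\N$ and $\mathcal{V}\in\mathbf{C}$ with $\mathcal{U}_e\cup\mathcal{T}^{(e)}_{n_e}\subseteq\mathcal{V}$ and $\mu(\mathcal{V}\mid\sigma_e)<1$, and put $\mathcal{U}_{e+1}=\mathcal{V}$. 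Now I must append an element of $W$ to $\sigma_e$ without spoiling the invariant, i.e.\ find $w\in W$ with $\mu(\mathcal{V}\mid\sigma_e w)<1$. Since $\mu(\mathcal{V}\mid\sigma_e)<1$, apply (P1) to get $\mathcal{V}'\in\mathbf{C}$ with $(\mathcal{V}\mid\sigma_e)\subseteq\mathcal{V}'$, then apply (P2) to $\mathcal{V}'$ to get $\mathcal{V}''\in\mathbf{C}$ with $\mathcal{V}'\subseteq\mathcal{V}''$ and with $[\tau]\subseteq\mathcal{V}''$ whenever $\mu(\mathcal{V}'\mid\tau)=1$. As $\mathcal{V}''\in\mathbf{C}$, the hypothesis on $W$ gives $[W]\not\subseteq\mathcal{V}''$, so some $w\in W$ has $[w]\not\subseteq\mathcal{V}''$; by the defining property of $\mathcal{V}''$ this forces $\mu(\mathcal{V}'\mid w)<1$, whence $\mu(\mathcal{V}\mid\sigma_e w)=\mu\big((\mathcal{V}\mid\sigma_e)\mid w\big)\leq\mu(\mathcal{V}'\mid w)<1$. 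Setting $\sigma_{e+1}=\sigma_e w$ restores the invariant, and the recursion proceeds.

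The delicate point --- and the reason all three closure properties are needed --- is this block-appending step. The hypothesis that $[W]$ is not covered by any member of $\mathbf{C}$ only yields $w\in W$ with $[w]\not\subseteq\mathcal{V}''$, and for a general open set $[w]\not\subseteq\mathcal{V}''$ is strictly weaker than $\mu(\mathcal{V}''\mid w)<1$; it is precisely to bridge this gap that (P2) is used, first enlarging $\mathcal{V}'$ to a set that absorbs every cylinder on which it has conditional measure $1$, so that failing to contain $[w]$ does force conditional measure below $1$. Property (P1) is what lets us re-enter $\mathbf{C}$ after conditioning on $\sigma_e$, and (P3) is what lets us fold the test component $\mathcal{T}^{(e)}_{n_e}$ into $\mathcal{U}_{e+1}$ while keeping the conditional measure at $\sigma_e$ below $1$; note (P3) can deliver such an $n_e$ only because $\mu(\mathcal{T}^{(e)}_n\mid\sigma_e)\to 0$. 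The rest is bookkeeping: the $\mathcal{U}_s$ are nested by construction, the $\sigma_s$ strictly lengthen, and $X=\bigcup_s\sigma_s$ is the required element of $W^\omega$.
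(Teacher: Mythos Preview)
Your proof is correct and follows essentially the same finite-extension construction as the paper: build $\sigma_{e+1}=\sigma_e w$ and $\mathcal{U}_{e+1}\supseteq\mathcal{U}_e\cup\mathcal{T}^{(e)}_{n_e}$ while maintaining $\mu(\mathcal{U}_e\mid\sigma_e)<1$, using (P3) to absorb the test and (P1)--(P2) to find the next block $w\in W$. The only cosmetic differences are that you initialize $\mathcal{U}_0$ as an arbitrary member of $\mathbf{C}$ (the paper takes $\mathcal{U}_0=\emptyset$, which strictly speaking need not lie in $\mathbf{C}$) and that you phrase the block-finding step directly rather than by contradiction.
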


\begin{proof}
We build~$X$ by a finite extension technique: having built a prefix $\sigma_e$ of $X$, during stage~$e$, we build an extension $\sigma_{e+1}$ of $\sigma_e$. This is done as follows. We begin with $\sigma_0$ equal to the empty string, and $\mathcal{U}_0$ equal to the empty subset of $\cs$. At the beginning of stage~$e$, suppose we have already built $\sigma_e$ and $\mathcal{U}_e$, satisfying $\mu(\mathcal{U}_e \mid \sigma_e) < 1$. We then use property~$(P3)$ to find $n_e$ and $\mathcal{V} \in \mathbf{C}$ such that $(\mathcal{U}_e \cup \mathcal{T}^{(e)}_{n_e}) \subseteq \mathcal{V}$ and $\mu(\mathcal{V} \mid \sigma_e)<1$. Next, we pick a non-empty string $\tau \in W$ such that $\mu(\mathcal{V} \mid \sigma_e \tau) < 1$. We then set $\mathcal{U}_{e+1}=\mathcal{V}$ and $\sigma_{e+1}=\sigma_e\tau$, finishing the $e$-th stage.

It remains to verify that this construction works. First, we have to make sure that at stage~$e$ of our construction, there is indeed a string $\tau \in W$ such that $\mu(\mathcal{V} \mid \sigma_e \tau) < 1$. Suppose that this is not the case. This means that  $\mu(\mathcal{V} \mid \sigma_e \tau) = 1$ for all~$\tau \in W$, or equivalently that $\mu\big((\mathcal{V} \mid \sigma_e) \mid \tau\big) = 1$ for all~$\tau \in W$. By definition of~$\mathcal{V}$, we have $\mu(\mathcal{V} \mid \sigma_e)<1$, so we can apply property~$(P1)$ to get some $\mathcal{V}' \in \mathbf{C}$ such that~$(\mathcal{V} \mid \sigma_e) \subseteq \mathcal{V'}$. In particular, $\mathcal{V}'$ is such that $\mu(\mathcal{V}' \mid \tau) = 1$ for all~$\tau \in W$. Now by property~$(P2)$ there exists $\mathcal{V}'' \in \mathbf{C}$ covering~$\mathcal{V}'$ and such that $[\tau] \subseteq \mathcal{V}''$ for all $\tau \in W$, which means that $[W]$ is covered by~$\mathcal{V}''$. This contradicts the assumption that~$[W]$ is not covered by a set that belongs to~$\mathbf{C}$.

Now, let $X$ be the unique element of $\cs$ such that all $\sigma_e$'s are prefixes of~$X$. It is easy to see from the construction that $X \in W^\omega$. Moreover, suppose $X$ fails a test $\mathcal{T}^{(e)}$. This would imply that $X \in  \mathcal{T}^{(e)}_{n_e}$ (the $n_e$ being defined in the above construction). Thus, there would exist $e'>e$ large enough, such that $[\sigma_{e'}] \subseteq \mathcal{T}^{(e)}_{n_e}$. This would be a contradiction since, by construction, on the one hand $[\sigma_{e'}] \nsubseteq \mathcal{U}_{e'+1}$ and on the other hand $\mathcal{T}^{(e)}_{n_e} \subseteq \mathcal{U}_{e+1} \subseteq \mathcal{U}_{e'+1}$. 
\end{proof}

\begin{prop} \label{prop:closure}
The hypotheses (P1,P2,P3) of Lemma~\ref{lem:main} are satisfied in the following three cases.\\
(MLR) $\mathbf{C}$ is the class of bounded c.e.\ open sets and $(\mathcal{T}_e)$ is the family of Martin-L\"of tests.\\
(CR) $\mathbf{C}$ is the class of open sets that are winning sets of exactly computable martingales and $(\mathcal{T}_e)$ is the family of tests induced by those martingales.\\
(SR) $\mathbf{C}$ is the class of bounded Schnorr open sets and $(\mathcal{T}_e)$ is the family of Schnorr tests.
\end{prop}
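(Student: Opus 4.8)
The plan is to verify the three closure properties (P1), (P2), (P3) separately in each of the three cases (MLR), (CR), (SR), exploiting in each case the remarks already established in the excerpt — namely that if $\mathcal{U}$ is c.e.\ open (resp.\ Schnorr open) then so is $(\mathcal{U}\mid\sigma)$ (resp.\ with computable measure, uniformly), and that winning sets arise from exactly computable normed martingales. For (MLR) the bound ``$<1$'' is preserved automatically, so (P1) is trivial: take $\mathcal{V}=(\mathcal{U}\mid\sigma)$ itself. For (P2), given a bounded c.e.\ open $\mathcal{U}=[U]$, I would take $\mathcal{V}=\mathcal{U}\cup\bigcup\{[\sigma]: \mu(\mathcal{U}\mid\sigma)=1\}$; the point is that the set of $\sigma$ with $\mu(\mathcal{U}\mid\sigma)=1$ is c.e.\ (it is $\Sigma^0_1$ since $\mu([U])$ is lower semicomputable and $\mu(\mathcal{U}\mid\sigma)=1$ iff $[\sigma]\setminus\mathcal{U}$ has measure $0$ iff $[\sigma]\subseteq[U]$ up to measure zero, which one detects by enumerating $U$ until the measure inside $[\sigma]$ exceeds $2^{-|\sigma|}-2^{-k}$ for all $k$ — actually one should be careful here and argue that $\mathcal{V}$ remains bounded because adding these cylinders cannot increase the measure). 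For (P3), given $\mathcal{U}\in\mathbf{C}$ with $\mu(\mathcal{U}\mid\sigma)<1$ and an index $e$, since $(\mathcal{T}^{(e)}_n)_n$ is a test we have $\mu(\mathcal{T}^{(e)}_n\mid\sigma)\to 0$ along some subsequence (indeed, $\mu(\mathcal{T}^{(e)}_n)\le 2^{-n}$ for MLR, so $\mu(\mathcal{T}^{(e)}_n\mid\sigma)\le 2^{|\sigma|-n}\to 0$), hence for $n_e$ large enough $\mu\big((\mathcal{U}\cup\mathcal{T}^{(e)}_{n_e})\mid\sigma\big)<1$, and $\mathcal{V}=\mathcal{U}\cup\mathcal{T}^{(e)}_{n_e}$ is again a bounded c.e.\ open set.

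For the (SR) case the same three constructions work, but at each step one must additionally track computability of the measure. In (P1), $\mu((\mathcal{U}\mid\sigma))=\mu(\mathcal{U}\cap[\sigma])/2^{-|\sigma|}$ is computable uniformly from $\mu(\mathcal{U})$ and $\sigma$, as already noted. In (P3), since $\mu(\mathcal{T}^{(e)}_n)=2^{-n}$ exactly for Schnorr tests, the union $\mathcal{U}\cup\mathcal{T}^{(e)}_{n_e}$ has measure $\mu(\mathcal{U})+2^{-n_e}-\mu(\mathcal{U}\cap\mathcal{T}^{(e)}_{n_e})$; the difficulty is that $\mu(\mathcal{U}\cap\mathcal{T}^{(e)}_{n_e})$ need not be computable for a general intersection of two Schnorr sets. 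The standard fix is to choose $n_e$ so large that $2^{-n_e}$ is negligible relative to the precision we want and to observe that $\mu(\mathcal{U}\cup\mathcal{T}^{(e)}_{n_e})$ is sandwiched between $\mu(\mathcal{U})$ and $\mu(\mathcal{U})+2^{-n_e}$, which makes it computable as long as $\mu(\mathcal{U})$ is; the union of two c.e.\ open sets is c.e.\ open, and its measure is computable when it is pinned down by two computable reals within $2^{-n_e}$. (P2) in the Schnorr case requires the most care: one must check that the ``saturation'' set $\{\sigma:\mu(\mathcal{U}\mid\sigma)=1\}$, added as cylinders, keeps the measure computable — here I would use that $\mu(\mathcal{U})$ is computable, so one can decide membership in this set by comparing approximations, and that adding these cylinders does not change the measure of the open set at all (those cylinders are already contained in $\mathcal{U}$ up to measure zero), so $\mu(\mathcal{V})=\mu(\mathcal{U})$ stays computable and bounded.

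For the (CR) case the sets in $\mathbf{C}$ are winning sets $[U]$ for $U=\{\sigma \text{ minimal}: d(\sigma)\ge q\}$ with $d$ an exactly computable normed martingale and $q>1$ rational. Here the main work is to realize each of the three operations as again a winning set of an exactly computable martingale. For (P1), given $\mathcal{U}$ a $(d,q)$-winning set and $\sigma$ with $\mu(\mathcal{U}\mid\sigma)<1$: by the Ville–Kolmogorov inequality this happens precisely when $d(\sigma)<q$, and then the translated-and-renormalized martingale $d'(\tau)=d(\sigma\tau)/d(\sigma)$ (assuming $d>0$, which we may arrange as in the proof of Theorem~\ref{thm:cr-covers}) is exactly computable and normed, and $(\mathcal{U}\mid\sigma)$ is contained in the $(d',q/d(\sigma))$-winning set. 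For (P2) one needs a winning set covering $\mathcal{U}$ that also swallows every $[\sigma]$ with $\mu(\mathcal{U}\mid\sigma)=1$, i.e.\ (again by Ville–Kolmogorov, or rather its sharp converse) every $\sigma$ with $d(\sigma)\ge q$ already — but those cylinders are in $\mathcal{U}$ already, while $\mu(\mathcal{U}\mid\sigma)=1$ can also occur with $d(\sigma)<q$, so one must combine $d$ with an auxiliary martingale that bets everything along such $\sigma$'s; I would take a suitable convex combination, or simply replace $q$ by a slightly smaller rational $q'$ with $1<q'<q$ if $\mathcal{U}$ happened to already be generated that way — this is the point requiring genuine thought. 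For (P3), given the $(d,q)$-winning set $\mathcal{U}$ with $d(\sigma)<q$ and an index $e$ for a martingale $d_e$ inducing the test $\mathcal{T}^{(e)}$, I would form a convex combination $d^*=\alpha d+(1-\alpha)d_e$ with $\alpha$ rational close to $1$, so that $d^*(\emptystring)=1$, $d^*$ is exactly computable, the $(d^*,q^*)$-winning set (for appropriate $q^*$) contains both $\mathcal{U}$ and $\mathcal{T}^{(e)}_{n_e}$ for $n_e$ large, and $d^*(\sigma)<q^*$ still holds (so the winning set has measure $<1$ conditioned on $\sigma$). The main obstacle throughout is (P2) — saturating under the relation ``conditional measure $1$'' while staying inside the class — and in the (CR) case this obstacle is the sharpest, since the class of winning sets is the least flexible of the three and one cannot simply add arbitrary cylinders; the resolution is that the cylinders one needs to add are exactly those on which the underlying martingale has already won (or can be made to win by a harmless perturbation of the threshold), so no real enlargement of the class of sequences covered occurs.
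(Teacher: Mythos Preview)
Your plan for (P1) and (P3) in the (MLR) case is fine and matches the paper. The genuine gap is in (P2), and it recurs across all three cases.

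\textbf{(MLR), property (P2).} Your set $\{\sigma:\mu(\mathcal{U}\mid\sigma)=1\}$ is \emph{not} $\Sigma^0_1$. For a c.e.\ open $\mathcal{U}$, the quantity $\mu(\mathcal{U}\cap[\sigma])$ is only left-c.e., so the condition ``$\mu(\mathcal{U}\cap[\sigma])=2^{-|\sigma|}$'' asks whether a left-c.e.\ real attains its upper bound; your own description (``exceeds $2^{-|\sigma|}-2^{-k}$ for all $k$'') is a $\Pi^0_2$ predicate, not $\Sigma^0_1$. So your $\mathcal{V}$ need not be c.e.\ open at all. The paper repairs this by replacing the threshold $1$ with a rational $q<1$ chosen so that $\mu(\mathcal{U})<q$: the set $\{\sigma:\mu(\mathcal{U}\mid\sigma)>q\}$ \emph{is} c.e., and one then checks boundedness via the Markov-type inequality $\mu(\mathcal{V})\le \mu(\mathcal{U})/q<1$.

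\textbf{(SR), property (P2).} The same obstruction applies. Even though $\mu(\mathcal{U}\mid\sigma)$ is now a computable real, ``equals $1$'' is still not decidable, so you cannot ``decide membership'' as you propose. The paper's construction here is genuinely more delicate: one builds a \emph{computable} set $V$ of strings by, for each $\sigma$, waiting for a stage $s$ with $\mu(\mathcal{U}\setminus\mathcal{U}_s)$ small relative to $2^{-2|\sigma|}$ and then testing a strict inequality against a threshold below $1$; careful bookkeeping of the errors shows $[V]$ is bounded and has computable measure.

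\textbf{(CR), property (P2).} Here you have the difficulty backwards. Your claim that ``$\mu(\mathcal{U}\mid\sigma)=1$ can also occur with $d(\sigma)<q$'' is false: if no prefix of $\sigma$ has $d\ge q$, then Ville--Kolmogorov applied at $\sigma$ with ratio $q/d(\sigma)>1$ gives $\mu(\mathcal{U}\mid\sigma)\le d(\sigma)/q<1$. Hence $\mu(\mathcal{U}\mid\sigma)=1$ forces $[\sigma]\subseteq\mathcal{U}$ already, and one may simply take $\mathcal{V}=\mathcal{U}$. This is the paper's argument, and it makes (P2) the \emph{easiest} of the three properties in the (CR) case, not the hardest.

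\textbf{(SR), property (P3).} A minor point: your worry that $\mu(\mathcal{U}\cap\mathcal{T}^{(e)}_{n_e})$ might not be computable is unfounded. If $\mathcal{A},\mathcal{B}$ are c.e.\ open with computable measure, then $\mu(\mathcal{A}\cup\mathcal{B})$ is left-c.e.\ (as a c.e.\ open set) and also equals $\mu(\mathcal{A})+\mu(\mathcal{B})-\mu(\mathcal{A}\cap\mathcal{B})$, which is a computable real minus a left-c.e.\ real, hence right-c.e.; so it is computable. Your sandwich argument, by contrast, would only pin the measure down to precision $2^{-n_e}$, which does not suffice for computability.
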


\begin{proof}
(MLR) As we previously observed, for every c.e.\ open set $\mathcal{U}$ and $\sigma \in \fs$, $(\mathcal{U} \mid \sigma)$ is a c.e.\ open set and its index can be computed from an index of $\mathcal{U}$ and $\sigma$. The property~(P1) thus follows immediately. For the property~(P3), given a c.e.\ open set $\mathcal{U}$ and $\sigma \in \fs$, such that $\mu(\mathcal{U} \mid \sigma) < 1- 2^{-k}$ for some $k >0$, together with a Martin-L\"of test~$\mathcal{T}^{(e)}$, take $n_e=|\sigma|+k$. By the definition of a Martin-L\"of test, $\mu(\mathcal{T}^{(e)}_{n_e}) < 2^{-n_e} = 2^{-|\sigma|-k}$. So $\mu(\mathcal{T}^{(e)}_{n_e} \mid \sigma) < 2^{-k}$, and the set $\mathcal{V}=\mathcal{U} \cup \mathcal{T}^{(e)}_{n_e}$ is c.e.\ open and satisfies $\mu(\mathcal{V} \mid \sigma) < (1-2^{-k})+2^{-k} < 1$. We now check that~(P2) holds. Given a bounded c.e.\ open set~$\mathcal{U}$, let~$q<1$ be a rational such that $\mu(\mathcal{U})<q$ and set 
\[
\mathcal{V} = \bigcup \big{\{}[\sigma]\; : \; \mu(\mathcal{U} \mid \sigma) > q \big{\}}
\]
It is clear that~$\mathcal{V}$ is c.e.\ open and if $\mu(\mathcal{U} \mid \sigma)=1$ then $[\sigma] \subseteq \mathcal{V}$. It remains to check that~$\mathcal{V}$ is bounded. Let $F$ be the set of strings $\sigma$ that are minimal among those satisfying $\mu(\mathcal{U} \mid \sigma) > q$. We have
\[
\mu(\mathcal{V})=\sum_{\sigma \in F} \mu([\sigma]) \leq \sum_{\sigma \in F} \frac{\mu(\mathcal{U} \cap [\sigma])}{q} \leq \frac{\mu(\mathcal{U} \cap [F])}{q} \leq \frac{\mu(\mathcal{U})}{q} < 1.
\]

(CR) Let $U$ be a c.e.\ set of strings such that $U=\{\sigma\; : \; \sigma~\text{minimal s.t. } d(\sigma) \geq q\}$ for some exactly computable normed martingale~$d$ and rational~${q>1}$. 

For property~(P1), suppose that $\mu(U \mid \sigma) < 1$. We thus have $d(\sigma) < q$ (otherwise $[\sigma] \subseteq [U]$), and we can also assume that $d(\sigma)>0$ (otherwise $(U \mid \sigma)=\emptyset$ and there is nothing to prove). We have
\begin{align*}
(U \mid \sigma) &=\{ \tau \; : \; \tau~\text{minimal s.t. } d(\sigma\tau) \geq q\} \\
&= \Big{\{} \tau \; : \; \tau~\text{minimal s.t. } \frac{d(\sigma\tau)}{d(\sigma)} \geq \frac{q}{d(\sigma)}\Big{\}}.
\end{align*}
It is easy to check that $\tau \mapsto \frac{d(\sigma\tau)}{d(\sigma)}$ is an exactly computable normed martingale, and since $\frac{q}{d(\sigma)} > 1$ we see that $(U \mid \sigma)$ is a winning set of strings. 

For property~(P2), we will see that the property $\big( \mu(U \mid \sigma)=1 \Rightarrow [\sigma] \subseteq [U] \big)$ holds. Indeed, if $\mu(U \mid \sigma)=1$, this means that for almost all~$X \in [\sigma]$, there exists an~$n$ such that $d(X \uh n) \geq q$. By the Ville-Kolmogorov inequality, this implies that $d(\sigma) \geq q$. 

For property~(P3), let $\sigma$ be such that $\mu(U \mid \sigma)<1$. As we have seen, this implies $d(\sigma) < q$. Take the $e$-th test~$\mathcal{T}^{(e)}$ associated to an exactly computable normed martingale~$d_e$ (i.e., $\mathcal{T}^{(e)}_n$ is the open set generated by the strings $\sigma$ such that $d(\sigma) \geq 2^n$). We need to find $n_e$ such that $([U] \cup \mathcal{T}^{(e)}_{n_e})$ is covered by $[V]$, where $V$ is a winning set such that $\mu(V \mid \sigma) < 1$. Let~$n_e$ be large, to be specified later. Let $D$ be the exactly computable normed martingale defined by
\[
D= (1-2^{-n_e+1})d +2^{-n_e+1} d_e.
\]
We have $D(\sigma)=(1-2^{-n_e+1})d(\sigma) +2^{-n_e+1} d_e(\sigma)$. Now, suppose $X \in ([U] \cup \mathcal{T}^{(e)}_{n_e})$. If $X \in [U]$, then $d(X \uh n) \geq q$ for some~$n$, and then $D(X \uh n) \geq (1-2^{-n_e+1})q$. If $X \in  \mathcal{T}^{(e)}_{n_e}$, then $d(X \uh n) \geq 2^{-n_e+1} 2^{n_e}=2$ for some~$n$. We thus consider the set 
\[
V=\{ \tau\; : \; \tau~\text{minimal s.t. } D(\tau) \geq \min((1-2^{-n_e+1})q,2)\}
\]
Now, for $n_e$ large enough, we can ensure from the previous calculations that $D(\sigma)$ is as close as we want to $d(\sigma)<q$, and thus that $\min((1-2^{-n_e+1})q,2)>D(\sigma)$. The above set $V$ is then as desired: it is clearly a winning set, it covers $([U] \cup \mathcal{T}^{(e)}_{n_e})$, and $\mu(V \mid \sigma)<1$ as $D(\sigma) < \min((1-2^{-n_e+1})q,2)$. \\

(SR) Property~(P1) is clearly satisfied. Given $\mathcal{U}$ a c.e.\ open set of computable measure, and $\sigma \in \fs$, $(\mathcal{U} \mid \sigma)$ is a c.e.\ open set whose measure is computable (uniformly in $\sigma$ and an index for $\mathcal{U}$). Indeed, if $\widehat{\mathcal{U}}$ is a clopen approximation of~$\mathcal{U}$ such that $\mu(\mathcal{U} \setminus \widehat{\mathcal{U}}) < \varepsilon$, then $\mu\Big((\mathcal{U} \mid \sigma) \setminus (\widehat{\mathcal{U}} \mid \sigma)\Big) < \varepsilon\cdot 2^{|\sigma|}$. Property~(P3) is also clearly satisfied, as it is easy to see that given two c.e.\ open sets $\mathcal{U}$ and $\mathcal{V}$ of computable measure, $\mathcal{U} \cup \mathcal{V}$ is also a c.e.\ open set of computable measure.

For property~(P2), let $\mathcal{U}$ be a bounded Schnorr open set and let $k$ be large enough that $\mu(\mathcal{U})<1-2^{-k}$. We define a computable set of strings $V$ such  $[V]\supseteq\mathcal{U}$ is a bounded Schnorr open set. For every $\sigma\in\fs$, look for a stage $s$ such that $\mu(\mathcal{U}\setminus\mathcal{U}_s)<2^{-2|\sigma|-k-1}$. If $\mu(\mathcal{U}_s \mid \sigma)>1-2^{-|\sigma|-k-1}$, then put $\sigma$ into~$V$. It is clear that~$V$ is computable. If $\mu(\mathcal{U} \mid \sigma)=1$, then it must be the case that $\mu(\mathcal{U}_s \mid \sigma)>1-2^{-|\sigma|-k-1}$; otherwise
\[
\mu(\mathcal{U}\setminus\mathcal{U}_s)\geq \mu([\sigma]\setminus\mathcal{U}_s)\geq 2^{-|\sigma|} - 2^{-|\sigma|}(1-2^{-|\sigma|-k-1}) = 2^{-2|\sigma|-k-1},
\]
which contradicts the choice of $s$. Therefore, $\mu(\mathcal{U} \mid \sigma)=1$ implies that $\sigma\in V$, so $[\sigma]\subseteq[V]$ as required. This also implies that $\mathcal{U}\subseteq [V]$. It remains to show that $[V]$ is bounded and has computable measure. By adding $\sigma$ to $V$, we are increasing the measure of $[V]\setminus\mathcal{U}$ by less than $2^{-|\sigma|} 2^{-|\sigma|-k-1} = 2^{-2|\sigma|-k-1}$. Therefore,
\[
\mu([V]\setminus\mathcal{U}) < \sum_{\sigma\in\fs} 2^{-2|\sigma|-k-1} = \sum_{n\in\omega} 2^n2^{-2n-k-1} = 2^{-k}.
\]
This implies that $\mu([V]) < \mu(\mathcal{U}) + 2^{-k} < 1$, so $[V]$ is a bounded c.e.\ open set. Similarly, for every $m$,
\[
\mu\big([V]\setminus([V\cap 2^{<m}]\cup\mathcal{U})\big) < \sum_{\sigma\in 2^{\geq m}} 2^{-2|\sigma|-k-1} = \sum_{n\geq m} 2^n2^{-2n-k-1} = 2^{-m-k}.
\]
But $\mu([V\cap 2^{<m}]\cup\mathcal{U})$ is computable uniformly in $m$, so $\mu([V])$ is also computable. 
\end{proof}


\section{Applications}\label{sec:applications}

\subsection{Lowness notions}\label{ssec:apps-lowness}

Together with the results of Section~\ref{sec:open-covers}, Lemma~\ref{lem:main} has interesting consequences. First, it follows from it that all lowness notions involving Martin-L\"of randomness, computable randomness, or Schnorr randomness can essentially be reduced to a property of open sets. For example $A \in \Low{\MLR}$ if and only if every bounded $A$-c.e.\ open set can be covered by a bounded c.e.\ open set (under this form, this was first stated by Kjos-Hanssen in~\cite{Kjos-Hanssen2007}, but most of the ideas are already present in Ku{\v c}era~\cite{Kucera1985}); $A \in \Low{\CR}$ if and only if every $A$-winning open set can be covered by a winning open set; $A \in \Low{\SR}$ if and only if every bounded $A$-computable open set can be covered by a bounded computable open set. This also works for pairs of randomness notions: for example, $A \in \Lowpair{\MLR}{\CR}$ if and only if every $A$-winning open set can be covered by a bounded c.e.\ open set. Let us briefly present the proof of one such result (the proofs of all the other claims are almost identical).

\begin{cor} \label{cor:low-cr-sr}
Let $A \in \cs$. The following are equivalent:\\
(i) $A \in \Lowpair{\CR}{\SR}$\\
(ii) Every bounded $A$-Schnorr open set can be covered by a winning open set.  
\end{cor}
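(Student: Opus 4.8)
The strategy is to apply Lemma~\ref{lem:main} in both directions, much as one would prove the characterization $A \in \Low{\MLR}$ iff every bounded $A$-c.e.\ open set is covered by a bounded c.e.\ open set. For the direction $(ii) \Rightarrow (i)$: suppose every bounded $A$-Schnorr open set is covered by a winning open set, and suppose toward a contradiction that $A \notin \Lowpair{\CR}{\SR}$, i.e., there is an $X$ that is computably random but not $A$-Schnorr random. By Theorem~\ref{thm:sr-covers} relativized to $A$ (and the sharpened form in Remark~\ref{rem:tests-and-covers}), there is a bounded $A$-Schnorr open set $\mathcal{W} = [W]$, with $W$ a bounded $A$-Schnorr prefix-free set of strings, such that every tail of $X$ lies in $\mathcal{W}$, hence $X \in W^\omega$. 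By hypothesis $\mathcal{W}$ is covered by a winning open set $\mathcal{V}$; then $W^\omega \subseteq \mathcal{W}^\omega \subseteq \mathcal{V}^\omega$ — more carefully, since $[W] \subseteq \mathcal{V} = [V]$ for a winning set of strings $V$, one checks as in Theorem~\ref{thm:cr-covers} that $X \in W^\omega$ implies all tails of $X$ are in $\mathcal{V}$, so $X \in V^\omega$, and then $X$ is not computably random by Theorem~\ref{thm:cr-covers}$(iii) \Rightarrow (i)$. This contradicts the choice of $X$. (One should take a little care that ``covered'' here means the set-theoretic inclusion $[W] \subseteq [V]$, which is exactly what is needed to push through the $U^\omega$ argument.)

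For the direction $(i) \Rightarrow (ii)$: suppose there is a bounded $A$-Schnorr open set $\mathcal{W} = [W]$ (with $W$ a bounded $A$-Schnorr prefix-free set of strings) that is \emph{not} covered by any winning open set. We want to produce an $X$ that is computably random but not $A$-Schnorr random, witnessing $A \notin \Lowpair{\CR}{\SR}$. Apply Lemma~\ref{lem:main} with $\mathbf{C}$ the class of winning open sets and $(\mathcal{T}^{(e)})$ the family of tests induced by exactly computable normed martingales — the closure properties (P1), (P2), (P3) hold by the (CR) case of Proposition~\ref{prop:closure} — and with the prefix-free set $W$, which by assumption is not covered by any member of $\mathbf{C}$. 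The lemma gives an $X \in W^\omega$ passing every test induced by an exactly computable normed martingale, i.e., $X$ is computably random. On the other hand, $X \in W^\omega$ with $W$ a bounded $A$-Schnorr prefix-free set means, by Theorem~\ref{thm:sr-covers}$(iii) \Rightarrow (i)$ relativized to $A$, that $X$ is not $A$-Schnorr random. Hence $A \notin \Lowpair{\CR}{\SR}$, completing the contrapositive.

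The main subtlety — and the step most worth being careful about — is the interplay between the two notions of ``cover'': in the statement of Corollary~\ref{cor:low-cr-sr}, ``$\mathcal{W}$ is covered by $\mathcal{V}$'' should mean $\mathcal{W} \subseteq \mathcal{V}$ as subsets of $\cs$ (equivalently $[W] \subseteq [V]$), whereas the Kjos-Hanssen--style characterizations and Theorems~\ref{thm:mlr-covers}--\ref{thm:cr-covers} are phrased in terms of tails belonging to a single open set and membership in $U^\omega$. I would check explicitly that $[W] \subseteq [V]$ together with $X \in W^\omega$ forces every tail of $X$ into $[V]$ and hence $X \in V^\omega$ — this is immediate since each tail of $X$ lies in $[W] \subseteq [V]$ once we know $X \in W^\omega$, exactly the inductive step used in Theorem~\ref{thm:mlr-covers}$(ii)\Rightarrow(iii)$. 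Everything else is a routine invocation of Lemma~\ref{lem:main}, Proposition~\ref{prop:closure}, and the relativized forms of the covering theorems, so the proof is short; the only real content is lining up the quantifiers and the two covering relations correctly.
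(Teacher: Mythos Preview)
Your proposal is correct and follows essentially the same approach as the paper: the contrapositive of $(i)\Rightarrow(ii)$ via Lemma~\ref{lem:main} with the CR case of Proposition~\ref{prop:closure}, and $(ii)\Rightarrow(i)$ via the covering Theorems~\ref{thm:sr-covers} and~\ref{thm:cr-covers}. One small simplification: in your $(ii)\Rightarrow(i)$ you need not pass through $W^\omega$ and $V^\omega$ at all---the paper simply uses form~(ii) of the covering theorems (``all tails of $X$ lie in $\mathcal{U}\subseteq\mathcal{V}$'') and invokes Theorem~\ref{thm:cr-covers}$(ii)\Rightarrow(i)$ directly, which avoids the bookkeeping you flag as a subtlety.
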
   

\begin{proof}
$(i) \Rightarrow (ii)$.  Suppose that $(ii)$ does not hold, i.e., there exists a bounded $A$-Schnorr open set $\mathcal{U}$ that cannot be covered by any winning open set. Let~$U$ be an $A$-c.e.\ prefix-free set of strings generating~$\mathcal{U}$. We can apply Lemma~\ref{lem:main} with $\mathbf{C}$ the class of winning open sets and $\mathcal{T}$ the family tests induced by exactly computable martingales (this is allowed by Proposition~\ref{prop:closure}), from which we get the existence of $X \in U^\omega$ that passes all tests induced by exactly computable martingales (hence $X$ is computably random). By Theorem~\ref{thm:sr-covers} (relativized to $A$), $X$ is not $A$-Schnorr random. Thus $A \notin \Lowpair{\CR}{\SR}$. 

$(ii) \Rightarrow (i)$. Suppose $(ii)$ holds and take $X \in \cs$ such that $X$ is not $A$-Schnorr random. By Theorem~\ref{thm:sr-covers}, there exists a bounded $A$-Schnorr open set $\mathcal{U}$ such that all tails of~$X$ belong to~$\mathcal{U}$. By assumption~$(ii)$, $\mathcal{U}$ can be covered by a winning open set~$\mathcal{V}$. Thus, all tails of $X$ belong to $\mathcal{V}$, which by Theorem~\ref{thm:cr-covers} implies that~$X$ is not computably random.  
\end{proof}

\subsection{Weak reducibilities}\label{ssec:apps-reducibilities}

\newcommand{\LR}{\mathit{LR}}
\newcommand{\CRwr}{\mathit{CR}}

Nies~\cite{Nies2005} introduced a weak reducibility generalizing low for Martin-L\"of randomness. He defined $A\leq_\LR B$ to mean that $\MLR^B\subseteq\MLR^A$. So, $A\leq_\LR \emptyset$ iff $A\in\Low{\MLR}$. What Kjos-Hanssen~\cite{Kjos-Hanssen2007} actually proved was that $A\leq_\LR B$ if and only if every bounded $A$-c.e.\ open set can be covered by a bounded $B$-c.e.\ open set. This result follows easily from our framework, as do the analogous results for the weak reducibilites associated with computable and Schnorr randomness, although these relations have not received attention. For example, if we write $A\leq_\CRwr B$ to mean that $\CR^B\subseteq\CR^A$, then the following characterization follows by a proof identical to that of Corollary~\ref{cor:low-cr-sr}.

\begin{cor}
The following are equivalent for $A,B\in \cs$:\\
(i) $A\leq_\CRwr B$\\
(ii) Every $A$-winning open set can be covered by a $B$-winning open set.  
\end{cor}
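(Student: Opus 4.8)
The plan is to mimic the proof of Corollary~\ref{cor:low-cr-sr} almost verbatim, with the only change being that the ``inner'' class of open sets is now a relativized class (the $B$-winning open sets) rather than the unrelativized one. For the direction $(i) \Rightarrow (ii)$, I would argue by contrapositive: suppose there is an $A$-winning open set $\mathcal{U}$ that cannot be covered by any $B$-winning open set. Let $U$ be an $A$-c.e.\ winning set of strings with $\mathcal{U}=[U]$; since $U$ is a winning set of a ($A$-)computable martingale, $[W]:=\mathcal{U}$ is a prefix-free-generated bounded open set not covered by any member of the class $\mathbf{C}$ of $B$-winning open sets. The point is that Proposition~\ref{prop:closure}, case (CR), relativizes: the class of $B$-winning open sets together with the family of tests induced by $B$-exactly-computable martingales satisfies (P1), (P2), (P3), because all the manipulations used there (translating a martingale by $\sigma$, taking convex combinations of two martingales, reading off minimal strings where a martingale exceeds a rational threshold) stay within the $B$-computable world. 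So I apply Lemma~\ref{lem:main} (relativized to $B$) to get $X \in U^\omega$ passing all tests induced by $B$-exactly-computable martingales, i.e.\ $X \in \CR^B$. On the other hand, by Theorem~\ref{thm:cr-covers} relativized to $A$, $X \in U^\omega$ with $U$ an $A$-winning set implies $X \notin \CR^A$. Hence $\CR^B \not\subseteq \CR^A$, i.e.\ $A \not\leq_{\CRwr} B$.

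For the direction $(ii) \Rightarrow (i)$, I would take any $X \notin \CR^A$; by Theorem~\ref{thm:cr-covers} relativized to $A$, there is an $A$-winning open set $\mathcal{U}$ containing all tails of $X$. By $(ii)$, $\mathcal{U}$ is covered by some $B$-winning open set $\mathcal{V}$, so all tails of $X$ lie in $\mathcal{V}$, and Theorem~\ref{thm:cr-covers} relativized to $B$ gives $X \notin \CR^B$. Thus $\CR^B \subseteq \CR^A$, i.e.\ $A \leq_{\CRwr} B$.

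The only genuine content beyond transcribing Corollary~\ref{cor:low-cr-sr} is the observation that everything relativizes cleanly, and in particular that Lemma~\ref{lem:main} and Proposition~\ref{prop:closure} go through when the ambient collection $\mathbf{C}$ and the test family $\mathcal{T}$ are taken relative to an arbitrary oracle $B$ (while the prefix-free set $W$ is allowed to be $A$-c.e.). I expect the main ``obstacle'' — really a bookkeeping point rather than a difficulty — to be making sure the two relativizations do not interfere: the construction in Lemma~\ref{lem:main} is a finite-extension argument that only needs to know the tests $\mathcal{T}^{(e)}$ and the closure properties (all $B$-side data), plus the set $W$ (the $A$-side data), so there is no conflict, and the resulting $X$ is simply a sequence in $2^\omega$ with no computability constraint of its own. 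Since the paper explicitly notes that the proof is ``identical to that of Corollary~\ref{cor:low-cr-sr},'' I would keep the write-up to a single short paragraph that states the contrapositive of $(i)\Rightarrow(ii)$, invokes the relativized Proposition~\ref{prop:closure} and Lemma~\ref{lem:main}, and then invokes Theorem~\ref{thm:cr-covers} on both sides; the converse is the three-line argument above.
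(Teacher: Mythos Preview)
Your proposal is correct and is exactly the approach the paper intends: the paper gives no separate proof but simply states that the argument is identical to that of Corollary~\ref{cor:low-cr-sr}, and your write-up carries out precisely that relativization (Proposition~\ref{prop:closure}(CR) relativized to $B$ for the class $\mathbf{C}$ and test family, Theorem~\ref{thm:cr-covers} relativized to $A$ and to $B$ on the two sides). The only cosmetic remark is that Lemma~\ref{lem:main} itself requires no relativization---it is stated for an arbitrary class $\mathbf{C}$ and arbitrary countable family of tests---so the phrase ``Lemma~\ref{lem:main} (relativized to $B$)'' is slightly misleading; the relativization happens entirely in the choice of $\mathbf{C}$ and $(\mathcal{T}^{(e)})$, as you in fact make clear elsewhere.
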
   

%
%

\subsection{Lowness for randomness vs lowness for tests}\label{ssec:apps-tests}

When defining lowness for randomness notions, two approaches are possible. The obvious one is the one we have studied so far in this paper: $A \in \cs$ is low for a randomness notion~$\mathscr{R}$ if relativizing the notion~$\mathscr{R}$ to~$A$ leaves the set of random sequences unchanged. Now suppose that the notion~$\mathscr{R}$ is described via a family of tests (i.e., a sequence is random for the notion $\mathscr{R}$ if it passes all tests), like all of the notions we have presented above. A second possible lowness condition on~$A$ is to require that $A$-tests are not stronger than unrelativized tests, i.e., that for every $A$-test $\mathcal{T}$, there exists a test $\mathcal{T'}$ such that every sequence failing $\mathcal{T}$ also fails~$\mathcal{T'}$. While it is clear that (given a randomness notion defined by tests) lowness for tests implies lowness for randomness, the converse may not hold; a priori, it could be the case that many unrelativized tests are needed to cover a particular $A$-test. Nonetheless, there is currently no known example of a randomness notion (or a pair of randomness notions) for which lowness for tests is different from lowness for randomness. The results proven above provide a uniform explanation to why this is the case for lowness notions relating to Martin-L\"of randomness, computable randomness and Schnorr randomness. Let us prove for example that lowness for Schnorr randomness implies lowness for Schnorr tests (a result originally proven by Kjos-Hanssen et al.~\cite{Kjos-HanssenNS2005}). Let $A$ be low for Schnorr randomness, and consider an $A$-Schnorr test $(\mathcal{V}_n)_{n \in \N}$. Let us set $\mathcal{X} = \bigcap_n \mathcal{V}_n$. By Remark~\ref{rem:tests-and-covers} (relativized to $A$), there exists a bounded $A$-Schnorr open set $\mathcal{U}$ such that for any $X \in \mathcal{X}$, all tails of~$X$ are in $\mathcal{U}$. But since $A$ is low for Schnorr randomness, we now know that $\mathcal{U}$ must be covered by a bounded Schnorr open set $\mathcal{U}'$. This implies in particular that for any $X \in \mathcal{X}$, all tails of~$X$ are in $\mathcal{U}'$. Applying Remark~\ref{rem:tests-and-covers} again, there must exist a Schnorr test $(\mathcal{V}'_n)_{n \in \N}$ that covers~$\mathcal{X}$. In other words, $(\mathcal{V}'_n)_{n \in \N}$ covers $(\mathcal{V}_n)_{n \in \N}$. Therefore, $A$ is low for Schnorr tests. The same proof works for computable randomness and Martin-L\"of randomness, including lowness for pairs. For example, $A \in \Lowpair{\CR}{\SR}$ if and only if for every $A$-Schnorr test $(\mathcal{V}_n)_{n \in \N}$, there exists a single computable martingale~$d$ that succeeds against all $X \in \bigcap_n \mathcal{V}_n$.

\subsection{Partial relativization and lowness for weak $2$-randomness}\label{ssec:apps-W2R}

The results we presented above can be extended to the case of weak $2$-randomness by a \emph{partial relativization}. A ``partial relativization'' of a computability concept~$\mathscr{C}$ to an oracle $A$ consists in relativizing only some parts of the definition of~$\mathscr{C}$ to $A$. This device has already appeared in Section~\ref{ssec:apps-reducibilities}: $A\leq_\LR B$ can be seen as a partial relativization of ``$A$ is low for Martin-L\"of randomness'' to $B$. A full relativization would demand that every bounded $A\oplus B$-c.e.\ open set can be covered by a bounded $B$-c.e.\ open set. Another interesting example was given by Cole and Simpson \cite{CS07}, who define a notion of \emph{boundedly limit recursive in $X$} by partially relativizing the notion of $\omega$-c.e.; they relativize the approximation function but not the computable bound on the number of mind changes. More examples are given in Barmpalias et al.~\cite{BarmpaliasMN-sub}.

The following partial relativization will be central to this section.

\begin{defn}
Let $Z$ be a given oracle and let $(\mathcal{U}_e)_{e \in \N}$ be an effective enumeration of all c.e.\ open subsets of $\cs$. A $\plop{Z}$-Martin-L\"of test is a sequence $(\mathcal{U}_{f(n)})_{n \in \N}$ such that $f$ is computable in~$Z$ and $\mu(\mathcal{U}_{f(n)}) \leq 2^{-n}$ for all~$n$. A sequence $X \in \cs$ is $\plop{Z}$-Martin-L\"of random if it passes all $\plop{Z}$-Martin-L\"of tests. We denote by $\MLR\plop{Z}$ the set of $\plop{Z}$-Martin-L\"of random sequences.
\end{defn}

The concept of $\plop{Z}$-Martin-L\"of randomness is only a partial relativization of Martin-L\"of randomness: a full relativization to $Z$ would also allow, in the above definitions, the sets $\mathcal{U}_e$ to be $Z$-c.e. open, as opposed to just c.e.\ open. 

It turns out that Proposition~\ref{prop:closure} extends to partially relativized Martin-L\"of tests. 

\begin{prop} \label{prop:plop-closure}
For any $Z \in \cs$, the hypotheses (P1,P2,P3) of Lemma~\ref{lem:main} are satisfied when $\mathbf{C}$ is the class of bounded c.e.\ open sets and $(\mathcal{T}^e)_{e \in \N}$ the family of $\plop{Z}$-Martin-L\"of tests. 
\end{prop}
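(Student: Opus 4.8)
The plan is to recognize that this is, almost verbatim, the (MLR) case of Proposition~\ref{prop:closure}: the class $\mathbf{C}$ here --- the bounded c.e.\ open sets --- is exactly the one treated there, and only the test family has been replaced. Since properties (P1) and (P2) refer solely to $\mathbf{C}$ and make no mention of the tests $\mathcal{T}^{(e)}$, their verifications carry over word for word. For (P1) one uses that $(\mathcal{U}\mid\sigma)$ is again a c.e.\ open set, with index computable from an index for $\mathcal{U}$ together with $\sigma$, so when $\mu(\mathcal{U}\mid\sigma)<1$ we may take $\mathcal{V}=(\mathcal{U}\mid\sigma)$. For (P2) one picks a rational $q<1$ with $\mu(\mathcal{U})<q$ and sets $\mathcal{V}=\bigcup\{[\sigma]:\mu(\mathcal{U}\mid\sigma)>q\}$, which is c.e.\ open, is bounded by the same averaging estimate $\mu(\mathcal{V})\le\mu(\mathcal{U})/q<1$, and contains $[\sigma]$ whenever $\mu(\mathcal{U}\mid\sigma)=1$.

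For (P3) the one thing to observe --- and it is the crux of the proposition --- is that a $\plop{Z}$-Martin-L\"of test $(\mathcal{U}_{f(n)})_{n\in\N}$ is assembled from \emph{genuine} c.e.\ open sets: the oracle $Z$ enters only through the selection of the indices $f(n)$, while each level $\mathcal{T}^{(e)}_n=\mathcal{U}_{f(n)}$ lies in the fixed effective enumeration of all c.e.\ open sets and satisfies $\mu(\mathcal{T}^{(e)}_n)\le 2^{-n}$. Hence the argument of the (MLR) case applies unchanged: given $\mathcal{U}\in\mathbf{C}$ and $\sigma\in\fs$ with $\mu(\mathcal{U}\mid\sigma)<1$, fix $k$ with $\mu(\mathcal{U}\mid\sigma)<1-2^{-k}$, and for each $e$ choose $n_e$ large enough that $n_e>|\sigma|+k$ and also $2^{-n_e}<1-\mu(\mathcal{U})$. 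Then $\mathcal{V}=\mathcal{U}\cup\mathcal{T}^{(e)}_{n_e}$ is a union of two c.e.\ open sets, hence c.e.\ open; it is bounded by the second choice of $n_e$; and $\mu(\mathcal{V}\mid\sigma)\le\mu(\mathcal{U}\mid\sigma)+\mu(\mathcal{T}^{(e)}_{n_e}\mid\sigma)<(1-2^{-k})+2^{-k}=1$, using $\mu(\mathcal{T}^{(e)}_{n_e}\mid\sigma)\le 2^{|\sigma|-n_e}<2^{-k}$. So $\mathcal{V}\in\mathbf{C}$ witnesses (P3).

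The only genuinely extra bookkeeping is the requirement in Lemma~\ref{lem:main} that $\mathcal{T}^{(e)}_n\in\mathbf{C}$ for \emph{all} $e,n$, which can fail at $n=0$ since $\mu(\mathcal{U}_{f(0)})\le 1$ need not be strict. This is exactly the harmless issue already implicit in the (MLR) case, and is dealt with identically --- for instance by noting, as in Remark~\ref{rem:test-note}, that one may assume without loss of generality that every $\plop{Z}$-Martin-L\"of test obeys $\mu(\mathcal{T}^{(e)}_n)\le 2^{-n-1}$, which makes every level bounded. I do not expect any real obstacle: the content of the proposition is simply the observation that this particular partial relativization --- varying \emph{which} c.e.\ open sets are enumerated into a test, while keeping the sets themselves unrelativized and keeping the computable bounds $2^{-n}$ on the measures of the levels --- leaves untouched everything the proof of Proposition~\ref{prop:closure} actually used.
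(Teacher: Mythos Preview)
Your proposal is correct and takes essentially the same approach as the paper: the paper's proof is the one-line observation that the (MLR) case of Proposition~\ref{prop:closure} ``does not use the uniform enumerability of Martin-L\"of tests, but solely the fact that $\mu(\mathcal{T}^{(e)}_n)\le 2^{-n}$ for all $e,n$,'' which is exactly your point. You have simply spelled out the details more carefully than the paper does --- in particular, your extra condition $2^{-n_e}<1-\mu(\mathcal{U})$ to ensure $\mathcal{V}$ is bounded, and your remark on the $n=0$ level, address minor gaps that the paper leaves implicit.
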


\begin{proof}
The proof is identical to the proof of Proposition~\ref{prop:closure}, as the latter does not use the uniform enumerability of Martin-L\"of tests, but solely the fact that $\mu(\mathcal{T}^{(e)}_n) \leq 2^{-n}$ for all $e,n$. 
\end{proof}

It follows from this proposition that, when it comes to lowness properties, partially relativized Martin-L\"of randomness behaves exactly like Martin-L\"of randomness (at least for the lowness properties discussed in this paper; it is not the case for some other notions. For example, the class $\Lowpair{\MLR}{\mathrm{KR}}$ is different from $\Lowpair{\SMLR}{\mathrm{KR}}$, where $\mathrm{KR}$ is the class of Kurtz random sequences). 

\begin{prop} \label{prop:plop-lowness}
For any~$Z$ in $\cs$, the following equalities hold.\\
(i) $\Lowpair{\MLR\plop{Z}}{\MLR} = \Low{\MLR}$\\
(ii) $\Lowpair{\MLR\plop{Z}}{\CR} = \Lowpair{\MLR}{\CR}$\\
(iii) $\Lowpair{\MLR\plop{Z}}{\SR} = \Lowpair{\MLR}{\SR}$
\end{prop}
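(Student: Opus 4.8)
The strategy is to show that each equality reduces to a covering characterization of the corresponding lowness class, exactly as in Section~\ref{ssec:apps-lowness}, and then observe that the covering condition is literally the same for $\MLR\plop{Z}$ and $\MLR$. The point is that Proposition~\ref{prop:plop-closure} furnishes the hypotheses of Lemma~\ref{lem:main} for the pair $(\mathbf{C},\mathcal{T})$ where $\mathbf{C}$ is the class of bounded c.e.\ open sets and $\mathcal{T}$ is the family of $\plop{Z}$-Martin-L\"of tests. Combined with the covering characterizations of $\MLR$, $\CR$, $\SR$ (Theorems~\ref{thm:mlr-covers},~\ref{thm:cr-covers},~\ref{thm:sr-covers}), this yields, by the same argument as Corollary~\ref{cor:low-cr-sr}, that $A\in\Lowpair{\MLR\plop{Z}}{\mathscr{S}}$ iff every bounded $A$-c.e.\ open set can be covered by a $\mathbf{C}_{\mathscr{S}}$-set, where $\mathbf{C}_{\mathscr{S}}$ is the bounded c.e.\ open sets, winning open sets, or bounded Schnorr open sets according as $\mathscr{S}$ is $\MLR$, $\CR$, or $\SR$.

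\textbf{Key steps.} First, for each of the three cases I would spell out the covering characterization of $\Lowpair{\MLR\plop{Z}}{\mathscr{S}}$: the $(ii)\Rightarrow(i)$ direction uses that if $X$ is not $A$-($\plop{Z}$-Martin-L\"of)-random then, by Remark~\ref{rem:tests-and-covers} relativized appropriately, the tails of $X$ lie in a bounded $A$-c.e.\ open set; covering that set by a $\mathbf{C}_{\mathscr{S}}$-set and applying the relevant covering theorem unrelativized shows $X\notin\mathscr{S}$. Here one must be slightly careful: the relativization of Remark~\ref{rem:tests-and-covers} needed is to $A$ alone but the test family is $\plop{Z}$-relativized; however, the only feature used in the proof of Theorem~\ref{thm:mlr-covers}$(i)\Rightarrow(ii)$ relativized is the existence of a universal $A$-($\plop{Z}$-Martin-L\"of)-test, which exists because the $\plop{Z}$-Martin-L\"of tests relative to $A$ (i.e., $f\leq_T A\oplus Z$) are uniformly enumerable given $A\oplus Z$. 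For the $(i)\Rightarrow(ii)$ direction, if some bounded $A$-c.e.\ open set $\mathcal{U}=[U]$ is not coverable by a $\mathbf{C}_{\mathscr{S}}$-set, apply Lemma~\ref{lem:main} (whose hypotheses hold by Proposition~\ref{prop:plop-closure} when $\mathscr{S}=\MLR$, and by Proposition~\ref{prop:closure} together with the observation that $\plop{Z}$-Martin-L\"of tests are still Martin-L\"of tests hence coverable by the $\CR$/$\SR$ machinery when $\mathscr{S}\in\{\CR,\SR\}$) to get $X\in U^\omega$ passing all relevant tests, i.e.\ $X\in\mathscr{S}$; but $X$ is not $A$-($\plop{Z}$-Martin-L\"of)-random, so $A\notin\Lowpair{\MLR\plop{Z}}{\mathscr{S}}$. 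Second, I would observe that the covering condition obtained is \emph{word for word} the covering condition characterizing $\Low{\MLR}$ (for (i)), $\Lowpair{\MLR}{\CR}$ (for (ii)), and $\Lowpair{\MLR}{\SR}$ (for (iii)), as recorded in Section~\ref{ssec:apps-lowness}. Hence the classes coincide.

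\textbf{Main obstacle.} The only subtlety is making sure that in the $(i)\Rightarrow(ii)$ arguments for $\mathscr{S}\in\{\CR,\SR\}$ the hypotheses of Lemma~\ref{lem:main} are genuinely met by the family of $\plop{Z}$-Martin-L\"of tests together with $\mathbf{C}=$ winning open sets or bounded Schnorr open sets. Inspecting the proof of Proposition~\ref{prop:closure}, the verifications of (P1) and (P2) do not involve the test family at all, and the verification of (P3) uses only that each $\mathcal{T}^{(e)}_n$ is an open set of measure at most $2^{-n}$ — which is exactly what the definition of a $\plop{Z}$-Martin-L\"of test guarantees. So the hypotheses hold verbatim. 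The statement of Proposition~\ref{prop:plop-closure} covers the $\mathscr{S}=\MLR$ case directly, and the remaining two cases need only this one-line remark. Thus the whole proof is essentially a bookkeeping argument: the partial relativization to $Z$ is invisible to Lemma~\ref{lem:main} because $Z$ never enters the closure conditions, only the (harmless) growth rate of the tests does.
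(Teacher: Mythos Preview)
Your proposal has the two randomness classes swapped in the covering characterization, and this error propagates through the whole argument. Recall that $A\in\Lowpair{\mathscr{R}}{\mathscr{S}}$ means $\mathscr{R}\subseteq\mathscr{S}^A$: it is $\mathscr{S}$ that gets relativized to~$A$, not $\mathscr{R}$. Accordingly, the covering characterization (as in Corollary~\ref{cor:low-cr-sr} and Section~\ref{ssec:apps-lowness}) reads: every $A$-$\mathscr{S}$-type open set is covered by an $\mathscr{R}$-type open set. So for $\Lowpair{\MLR\plop{Z}}{\CR}$ the correct condition is ``every $A$-winning open set is covered by a bounded c.e.\ open set'', not ``every bounded $A$-c.e.\ open set is covered by a winning open set'' as you wrote. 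Your claim that the resulting condition is ``word for word'' the Section~\ref{ssec:apps-lowness} characterization of $\Lowpair{\MLR}{\CR}$ is therefore false for~(ii) and~(iii); it only accidentally comes out right for~(i) because there the two classes have the same type. Relatedly, the phrase ``$X$ is not $A$-($\plop{Z}$-Martin-L\"of)-random'' has no role here: $\MLR\plop{Z}$ is an unrelativized class in the statement of the proposition.

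With the roles corrected, the argument the paper actually runs is short and uniform: since $\MLR\plop{Z}\subseteq\MLR$, the inclusions $\Lowpair{\MLR}{\mathscr{S}}\subseteq\Lowpair{\MLR\plop{Z}}{\mathscr{S}}$ are trivial. For the converse, take $A\notin\Lowpair{\MLR}{\mathscr{S}}$; by Section~\ref{ssec:apps-lowness} there is an $A$-$\mathscr{S}$-type set $U$ (bounded $A$-c.e., $A$-winning, or bounded $A$-Schnorr) not covered by any bounded c.e.\ open set. Now apply Lemma~\ref{lem:main} with $\mathbf{C}$ equal to the bounded c.e.\ open sets and the test family equal to the $\plop{Z}$-Martin-L\"of tests---this is exactly what Proposition~\ref{prop:plop-closure} licenses, and note that $\mathbf{C}$ is the \emph{same} class in all three cases. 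One obtains $X\in U^\omega$ that is $\plop{Z}$-Martin-L\"of random but not in $\mathscr{S}^A$, witnessing $A\notin\Lowpair{\MLR\plop{Z}}{\mathscr{S}}$. Your ``main obstacle'' discussion is thus aimed at the wrong target: you never need $\mathbf{C}$ to be the winning or Schnorr open sets here, and in fact your claimed verification of~(P3) for those classes with $\plop{Z}$-Martin-L\"of tests does not go through as stated (e.g., in the Schnorr case the proof of~(P3) in Proposition~\ref{prop:closure} requires the test levels to have computable measure, not merely measure $\leq 2^{-n}$).
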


\begin{proof}
The proofs of the three items are almost identical. Let us prove for example item (ii). Since $\MLR\plop{Z} \subseteq \MLR$, it is clear by definition that $\Lowpair{\MLR}{\CR} \subseteq \Lowpair{\MLR\plop{Z}}{\CR}$. Now take $A \notin \Lowpair{\MLR}{\CR}$. By the discussion of Section~\ref{ssec:apps-lowness}, this means that there exists an $A$-winning set $U$ such that $[U]$ is covered by no bounded c.e.\ open set. Therefore, one can apply Lemma~\ref{lem:main} with $\mathbf{C}$ the class of bounded c.e.\ open sets and $(\mathcal{T}^e)_{e \in \N}$ the family of $\plop{Z}$-Martin-L\"of tests (which is possible, by Proposition~\ref{prop:plop-closure}), to get an $X \in U^\omega$ that passes all $\plop{Z}$-Martin-L\"of tests. So $X$ is $\plop{Z}$-Martin-L\"of random, and, by Theorem~\ref{thm:cr-covers} (relativized to $A$), $X$ is not $A$-computably random. Thus $A \notin \Lowpair{\MLR\plop{Z}}{\CR}$. 
\end{proof}

Partial relativization of Martin-L\"of randomness is useful to study higher notions of randomness. For example, given an effective enumeration $(\mathcal{U}_e)_{e \in \N}$, the set $(e,k)$ of pairs such that $\mu(\mathcal{U}_e)< 2^{-k}$ is $\jump$-enumerable (because $\mu(\mathcal{U}_e)$ is $\jump$-computable, uniformly in~$e$). Thus, given a generalized Martin-L\"of test $(\mathcal{V}_n)_{n \in \N}$, one can $\jump$-compute a sequence $k_1 < k_2 < k_3 < \ldots$ such that $\mu(\mathcal{V}_{k_i}) < 2^{-i}$ for all~$i$. Therefore the generalized Martin-L\"of test  $(\mathcal{V}_n)_{n \in \N}$ is covered by the $\plop{\jump}$-Martin-L\"of test $\mu(\mathcal{V}_{k_i})_{i \in \N}$. This shows that $\plop{\jump}$-Martin-L\"of randomness implies weak $2$-randomness, and yields the following corollary. 

\begin{cor} \label{cor:w2r-lowness}
The following equalities hold.\\
(i) $\Lowpair{\SMLR}{\MLR} = \Low{\MLR}$\\
(ii) $\Lowpair{\SMLR}{\CR} = \Lowpair{\MLR}{\CR}$\\
(iii) $\Lowpair{\SMLR}{\SR} = \Lowpair{\MLR}{\SR}$
\end{cor}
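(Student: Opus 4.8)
The plan is to deduce Corollary~\ref{cor:w2r-lowness} directly from Proposition~\ref{prop:plop-lowness} by showing that, for each of the three randomness classes $\mathscr{S} \in \{\MLR, \CR, \SR\}$, we have $\Lowpair{\SMLR}{\mathscr{S}} = \Lowpair{\MLR\plop{\jump}}{\mathscr{S}}$. Since $\plop{\jump}$-Martin-L\"of randomness implies weak $2$-randomness (as established in the paragraph preceding the corollary), the inclusion $\SMLR \supseteq \MLR\plop{\jump}$ holds; but one should be careful about the direction, so let me be precise. We have $\MLR\plop{\jump} \subseteq \SMLR$ is the wrong way; the argument above shows every generalized Martin-L\"of test is covered by a $\plop{\jump}$-Martin-L\"of test, hence every $\plop{\jump}$-ML-random sequence is weak $2$-random, i.e., $\MLR\plop{\jump} \subseteq \SMLR$. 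Wait — that gives $\SMLR$ is the \emph{larger} class. Let me instead observe that we also trivially have $\MLR \supseteq \SMLR \supseteq \MLR\plop{\jump}$: the first inclusion because every Martin-L\"of test is a generalized one, and the second by the covering argument just recalled.

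Now the first step is to handle the ``$\supseteq$'' inclusions of the corollary. Since $\SMLR \subseteq \MLR$, it is immediate from the definition of lowness for pairs that $\Lowpair{\MLR}{\mathscr{S}} \subseteq \Lowpair{\SMLR}{\mathscr{S}}$ for each $\mathscr{S}$ — a smaller test class on the left of the pair makes the lowness condition easier to satisfy. This gives the right-to-left containment in each of (i), (ii), (iii) with no work.

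The second step is the ``$\subseteq$'' inclusions, i.e., $\Lowpair{\SMLR}{\mathscr{S}} \subseteq \Lowpair{\MLR}{\mathscr{S}}$, equivalently $\Lowpair{\SMLR}{\mathscr{S}} \subseteq \Lowpair{\MLR\plop{\jump}}{\mathscr{S}}$ combined with Proposition~\ref{prop:plop-lowness} (which says $\Lowpair{\MLR\plop{\jump}}{\mathscr{S}}$ equals $\Low{\MLR}$ for $\mathscr{S} = \MLR$ and $\Lowpair{\MLR}{\mathscr{S}}$ for $\mathscr{S} \in \{\CR, \SR\}$, taking $Z = \jump$). So it suffices to show $\Lowpair{\SMLR}{\mathscr{S}} \subseteq \Lowpair{\MLR\plop{\jump}}{\mathscr{S}}$. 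Take $A \in \Lowpair{\SMLR}{\mathscr{S}}$ and let $X$ be a sequence that is not $A$-$\mathscr{S}$-random; we must show $X$ is not $\plop{\jump}$-Martin-L\"of random. By Theorem~\ref{thm:cr-covers} or~\ref{thm:sr-covers} relativized to $A$ (depending on $\mathscr{S}$; for $\mathscr{S} = \MLR$ use Theorem~\ref{thm:mlr-covers}), there is an $A$-winning (resp.\ bounded $A$-Schnorr, resp.\ bounded $A$-c.e.) open set containing all tails of $X$. The cleanest route, mirroring the proof of Proposition~\ref{prop:plop-lowness}, is: if $X$ \emph{were} $\plop{\jump}$-ML-random, then in particular $X \in \SMLR$ (by the covering argument, since $\MLR\plop{\jump} \subseteq \SMLR$), hence $X \in \SMLR \subseteq \MLR$; but $A \in \Lowpair{\SMLR}{\mathscr{S}}$ means $X \in \mathscr{S}^A$, contradicting that $X$ is not $A$-$\mathscr{S}$-random. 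This contradiction shows $X \notin \MLR\plop{\jump}$, so $A \in \Lowpair{\MLR\plop{\jump}}{\mathscr{S}}$, and then Proposition~\ref{prop:plop-lowness} finishes the job.

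I expect the only subtlety — not really an obstacle — is keeping the inclusion $\MLR\plop{\jump} \subseteq \SMLR$ straight and invoking it correctly: the point is that $\plop{\jump}$-ML-randomness is a \emph{stronger} notion than weak $2$-randomness, so being $\plop{\jump}$-ML-random forces weak $2$-randomness, which forces membership in $\mathscr{S}^A$ for $A$ in the assumed lowness class. Everything else is bookkeeping across the three values of $\mathscr{S}$ and an appeal to Proposition~\ref{prop:plop-lowness} with $Z = \jump$.
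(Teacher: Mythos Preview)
Your approach is correct and matches the paper's proof exactly: the paper observes that the chain $\MLR\plop{\jump} \subseteq \SMLR \subseteq \MLR$ yields $\Lowpair{\MLR}{\mathscr{S}} \subseteq \Lowpair{\SMLR}{\mathscr{S}} \subseteq \Lowpair{\MLR\plop{\jump}}{\mathscr{S}}$, and the last class equals $\Lowpair{\MLR}{\mathscr{S}}$ by Proposition~\ref{prop:plop-lowness} with $Z=\jump$. Your write-up is more discursive than needed---the detour through Theorems~\ref{thm:mlr-covers}--\ref{thm:cr-covers} is never used, and the step ``hence $X \in \SMLR \subseteq \MLR$'' is redundant since $A \in \Lowpair{\SMLR}{\mathscr{S}}$ already gives $X \in \mathscr{S}^A$ directly from $X \in \SMLR$---but the underlying argument is identical.
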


\begin{proof}
Immediate from Proposition~\ref{prop:plop-lowness} (with $Z=\jump$) and the fact that $\MLR\plop{\jump} \subseteq \SMLR$. 
\end{proof}

Item (iii) answers a question of Nies~\cite{Nies2009} (see also Franklin~\cite{Franklin}, Greenberg and Miller~\cite{GreenbergM2009}). Although items (i) and (ii) were already known (proven respectively by Downey et al.~\cite{DowneyNWY2006} and Nies~\cite{Nies2009}), the proofs presented in this paper are simpler than the original ones. Note that Nies proved (ii) by showing that $\Lowpair{\SMLR}{\CR}=\Low{\MLR}$, which is more than we show above. However, in Theorem~\ref{thm:low-mlr-cr} we give a short proof that $\Lowpair{\MLR}{\CR}=\Low{\MLR}$, so together with Corollary~\ref{cor:w2r-lowness}(ii), we have reproved the stronger result.

\subsection{Highness notions}\label{ssec:apps-highness}

For a given pair $\mathscr{R}, \mathscr{S}$ of randomness notions with $\mathscr{R} \subseteq \mathscr{S}$, the class $\Lowpair{\mathscr{R}}{\mathscr{S}}$ denotes the set of oracles $A \in \cs$ that are weak enough to have $\mathscr{R} \subseteq \mathscr{S}^A$. It is natural to look at the dual concept of \emph{highness}, i.e.\ the set of oracles $A \in \cs$ that are powerful enough to have $\mathscr{S}^A \subseteq \mathscr{R}$, which we denote by $\Highpair{\mathscr{S}}{\mathscr{R}}$. These classes have primarily been studied by Franklin et al.~\cite{FranklinSY-sub} and Barmpalias et al.~\cite{BarmpaliasMN-sub}. Most highness notions involving the classes $\SMLR$, $\MLR$, $\CR$, and $\SR$ have been characterized: Franklin et al. showed that the classes $\Highpair{\SR}{\mathscr{R}}$ for $\mathscr{R}=\SMLR, \MLR, \CR$ are all equal to the set of $A$ such that $A \geq_T \jump$; Barmpalias et al.\ proved that the class $\Highpair{\MLR}{\SMLR}$ is equal to the set of sequences $A$ such that no $\jump$-computable function is diagonally non-computable relative to~$A$. Miller et al.~\cite{MNgRuTA} have recently given another characterization of $\Highpair{\MLR}{\SMLR}$; they proved that $A\notin\Highpair{\MLR}{\SMLR}$ iff every partial $A$-computable function is dominated by a $\jump$-computable function.

The classes $\Highpair{\CR}{\MLR}$ and $\Highpair{\CR}{\SMLR}$ on the other hand are not fully understood yet. Franklin et al.\ proved that every $A\in\Highpair{\CR}{\MLR}$ computes a Martin-L\"of random sequence. Kastermans, Lempp and Miller (unpublished) gave an alternative proof of this fact by showing that if $A\in\Highpair{\CR}{\MLR}$, then there is an $A$-computable martingale that succeeds against all non-Martin-L\"of random sequences. This result is an easy consequence of Lemma~\ref{lem:main}; we present the proof here.

\begin{prop}\label{prop:high-cr-mlr}
Let $A \in \cs$. The following are equivalent.\\
(i) $A \in \Highpair{\CR}{\MLR}$\\
(ii) Every bounded c.e.\ open set is covered by an $A$-winning set.\\
(iii) The first level\, $\mathcal{U}_1$ of a universal Martin-L\"of test is covered by an $A$-winning set.\\
(iv) There exists an $A$-computable martingale that succeeds against all $X$ that are not Martin-L\"of random.
\end{prop}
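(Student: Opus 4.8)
The plan is to prove the cycle of implications $(i) \Rightarrow (ii) \Rightarrow (iii) \Rightarrow (iv) \Rightarrow (i)$, with $(iii) \Rightarrow (iv)$ via Lemma~\ref{lem:main} being the heart of the argument. For $(i) \Rightarrow (ii)$, suppose $A \in \Highpair{\CR}{\MLR}$, i.e.\ $\CR^A \subseteq \MLR$. Take a bounded c.e.\ open set $\mathcal{U}$; by Theorem~\ref{thm:mlr-covers}, the set $\mathcal{X}$ of sequences all of whose tails lie in $\mathcal{U}$ consists entirely of non-Martin-L\"of random sequences (more precisely, $\mathcal{X} \subseteq U^\omega$ for the underlying prefix-free $U$, and $U^\omega \subseteq \overline{\MLR}$). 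Since $\CR^A \subseteq \MLR$, every element of $\mathcal{X}$ is non-$A$-computably-random, so by Theorem~\ref{thm:cr-covers} relativized to $A$ and the strengthened form in Remark~\ref{rem:tests-and-covers}, there is a single $A$-winning open set covering $\mathcal{X}$ — and in fact one wants it to cover $\mathcal{U}$ itself. The cleanest route is to argue contrapositively using the framework of Section~\ref{ssec:apps-lowness}: if some bounded c.e.\ open set $\mathcal{U}$ cannot be covered by any $A$-winning set, apply Lemma~\ref{lem:main} relativized to $A$ (valid by Proposition~\ref{prop:closure}(CR) relativized to $A$) with $\mathbf{C}$ the class of $A$-winning open sets and $W$ a prefix-free set generating $\mathcal{U}$, obtaining an $X \in W^\omega$ that is $A$-computably random; but $X \in U^\omega$ forces $X \notin \MLR$ by Theorem~\ref{thm:mlr-covers}, contradicting $\CR^A \subseteq \MLR$.

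The implication $(ii) \Rightarrow (iii)$ is immediate since $\mathcal{U}_1$ is a particular bounded c.e.\ open set. For $(iii) \Rightarrow (iv)$: given that $\mathcal{U}_1$ is covered by an $A$-winning open set $[V]$ with $V$ a $(d,q)$-winning set for some $A$-computable normed martingale $d$ and rational $q > 1$, I would mimic the construction in the proof of Theorem~\ref{thm:cr-covers}, $(iii) \Rightarrow (i)$, but applied to cover \emph{tails}. The point is that $X \notin \MLR$ implies every tail of $X$ lies in $\mathcal{U}_1$, hence in $[V]$; so letting $U$ be the prefix-free set underlying $\mathcal{U}_1$ (or working directly with $[V]$), the ``reset'' martingale $D$ — which simulates $d$ and restarts each time it reads a block in $V$ — is $A$-computable, normed, and multiplies its capital by a factor $\geq q$ on each such block. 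Since $X$'s tails are repeatedly in $[V]$, $D$ reads infinitely many blocks from $V$ along $X$ and hence $D(X\uh n) \to \infty$. One subtlety: the blocks in $V$ that tails of $X$ fall into need not tile $X$ in the neat way $U^\omega$ does, so I would phrase $D$'s reset rule to fire at the first extension entering $[V]$ and then argue that because \emph{every} tail is in $[V]$, after each reset another block is eventually completed; this domination-by-blocks bookkeeping is the one place requiring care, though it is essentially the argument already in Theorem~\ref{thm:cr-covers}.

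Finally $(iv) \Rightarrow (i)$ is direct: if $D$ is an $A$-computable martingale succeeding against every non-Martin-L\"of-random sequence, then any $X \notin \MLR$ is not $A$-computably random, i.e.\ $\CR^A \subseteq \MLR$, which is exactly $A \in \Highpair{\CR}{\MLR}$. The main obstacle I anticipate is $(iii) \Rightarrow (iv)$, specifically making the ``reset martingale reading blocks from a cover of all tails'' argument fully rigorous: one must ensure that having all tails inside $[V]$ genuinely guarantees infinitely many completed $V$-blocks along $X$ (rather than, say, the martingale stalling inside an incomplete block), and that the resulting $D$ is genuinely a martingale with the bookkeeping done uniformly enough to stay $A$-computable. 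All the other implications are short and follow the patterns already established in Sections~\ref{sec:open-covers} and~\ref{ssec:apps-lowness}.
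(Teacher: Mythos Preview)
Your proposal is correct and follows the paper's approach, with two minor wrinkles worth noting. First, you misattribute the role of Lemma~\ref{lem:main} in your opening sentence: it drives $(i) \Rightarrow (ii)$ (as you in fact correctly execute in your contrapositive argument), not $(iii) \Rightarrow (iv)$. Second, your anxiety about $(iii) \Rightarrow (iv)$ is unfounded. Once every tail of $X$ lies in the $A$-winning set $[V]$, the implication $(ii) \Rightarrow (iii)$ of Theorem~\ref{thm:mlr-covers} (whose proof works verbatim for any prefix-free $V$) gives $X \in V^\omega$ outright---the blocks \emph{do} tile $X$---so the reset martingale of Theorem~\ref{thm:cr-covers} $(iii)\Rightarrow(i)$, relativized to $A$, succeeds on $X$ with no further bookkeeping. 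The paper's proof of $(iii) \Rightarrow (iv)$ is exactly this two-line observation, and the ``stalling inside an incomplete block'' scenario you worry about cannot occur.
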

\begin{proof}
$(i) \Rightarrow (ii)$. If $(ii)$ does not hold, there is a bounded c.e.\ open set $\mathcal{U}$ that cannot be covered by any $A$-winning open set. Let~$U$ be a c.e.\ prefix-free set of strings generating~$\mathcal{U}$. We can apply Lemma~\ref{lem:main} with $\mathbf{C}$ the class of $A$-winning open sets and $\mathcal{T}$ the family tests induced by exactly $A$-computable martingales. To see that this is allowed, relativize Proposition~\ref{prop:closure} to $A$. Lemma~\ref{lem:main} gives us a sequence $X\in U^\omega$ that passes all tests induced by exactly $A$-computable martingales (hence is $A$-computably random). By Theorem~\ref{thm:mlr-covers}, $X$ is not Martin-L\"of random, so $A \notin \Highpair{\CR}{\MLR}$. 

$(ii) \Rightarrow (iii)$ is immediate, as is $(iv) \Rightarrow (i)$. This leaves $(iii) \Rightarrow (iv)$. Let $U$ be an $A$-winning set such that $[U]$ covers $\mathcal{U}_1$. Note that $U$ is prefix-free, by definition. The proof of $(iii) \Rightarrow (i)$ in Theorem~\ref{thm:cr-covers} (relativized to $A$) produces an $A$-computable martingale $d$ that succeeds against all $X\in U^\omega$. Now assume that $X$ is not Martin-L\"of random. Then all tails of $X$ are contained in $\mathcal{U}_1$, hence in $[U]$. This implies that $X\in U^\omega$, so $d$ succeeds on $X$. 
\end{proof}

To see that Proposition~\ref{prop:high-cr-mlr} implies the Franklin, Stephan and Yu result, note that every martingale computes a sequence on which it does not succeed. Hence, if $A \in \Highpair{\CR}{\MLR}$, then $A$ computes a Martin-L\"of random sequence. In fact:

\begin{cor}
If $A \in  \Highpair{\CR}{\MLR}$, then there is an~$A$-Turing functional $\Phi: \cs \rightarrow \cs$ that is total, one-to-one, and such that for all $X \in \cs$, $\Phi(X) \in \MLR$. 
\end{cor}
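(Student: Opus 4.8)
The plan is to build on Proposition~\ref{prop:high-cr-mlr}(iv), which hands us an $A$-computable martingale $d$ that succeeds against every non-Martin-L\"of random sequence. The idea is to use $d$ to drive an $A$-computable betting strategy that, reading an input $X$ bit by bit, produces an output $\Phi(X)$ whose initial segments are forced to be ``expensive'' for $d$, so that $d$ cannot succeed on $\Phi(X)$; since $d$ succeeds on every non-MLR sequence, $\Phi(X)$ must be Martin-L\"of random. To get injectivity, I would interleave this construction with the identity: at stage $n$, use the $n$-th bit of $X$ to branch, but only after having driven the output into a region where $d$ is small, so that both choices of the $n$-th output bit remain safe.

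Concretely, I would proceed as follows. First, normalize $d$ so it is positive and $A$-computable with $d(\emptystring)=1$ (adding a constant preserves the martingale condition and only shrinks the success set). Second, define $\Phi$ by recursion on the length of the already-read prefix $\sigma$ of $X$: maintain a current output string $\rho_\sigma$ with the invariant that $d(\rho_\sigma)$ is small, say $d(\rho_\sigma)\le 2$. Given the next bit $X(|\sigma|)=b$, extend $\rho_\sigma$ in two phases. In the ``coding phase,'' append the bit $b$ (this at most doubles $d$, so $d\le 4$). In the ``cooling phase,'' repeatedly append the bit $c\in\{0,1\}$ that minimizes $d$ on the one-step extension; since $d(\rho c_0)+d(\rho c_1)=2d(\rho)$, the smaller of the two is at most $d(\rho)$, and in fact one can drive $d$ back down below $2$ in finitely many steps — this uses that if $d$ stayed above $2$ along every such greedy extension, then (by the supermartingale/Ville estimate, or a direct computation) $d$ would have to be large at $\rho$ already, contradicting the invariant. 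Record $\rho_{\sigma b}$ as the result. Set $\Phi(X)=\bigcup_\sigma \rho_{\sigma}$; this is total because each cooling phase terminates, and $A$-computable because $d$ is.

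For correctness: $\Phi$ is one-to-one because the coding bit $b$ appears at a position in $\Phi(X)$ determined only by $|\sigma|$ and the earlier bits of $X$, so distinct $X$'s give distinct outputs. And $d$ never exceeds $4$ on any prefix of $\Phi(X)$, so $\limsup d(\Phi(X)\uh n)\le 4<\infty$, i.e., $d$ does not succeed on $\Phi(X)$. Since $d$ succeeds on every non-MLR sequence (Proposition~\ref{prop:high-cr-mlr}(iv)), it follows that $\Phi(X)\in\MLR$ for all $X$.

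The main obstacle is making the ``cooling phase'' rigorous and verifying that it always terminates: I need that from any $\rho$ with $d(\rho)\le 4$ there is a finite greedy extension $\rho'$ with $d(\rho')\le 2$. This follows because the greedy path $\rho=\rho^{(0)}\subset\rho^{(1)}\subset\cdots$ with $d(\rho^{(i+1)})=\min(d(\rho^{(i)}0),d(\rho^{(i)}1))$ satisfies $d(\rho^{(i+1)})\le d(\rho^{(i)})$ always, and if $d$ stayed $>2$ forever along this path then the path would single out an infinite sequence $Y$ with $d(Y\uh n)>2$ for all $n$ large, which is consistent; the right fix is to instead pick, at each step, the $c$ with $d(\rho^{(i)}c)\le d(\rho^{(i)})$ and note that the \emph{product} of the ratios along the other branch forces the measure of ``never cooled'' to be small — more cleanly, one argues that $\mu\{Y\in[\rho] : (\forall n)\, d(\rho Y\uh n)>2\}\le d(\rho)/2\le 2$, which is vacuous, so a cleaner approach is to cool to below $d(\rho)\cdot(1-2^{-k})+\text{small}$ isn't enough either; the correct and standard move is: since $d(\rho 0)+d(\rho 1)=2d(\rho)$, if $d(\rho)\le 4$ then at least one child has value $\le 4$, and more importantly the set of $Y$ with $d$ staying $\ge 2$ has measure $\le d(\rho)/2$, so \emph{some} finite extension brings $d$ below $2$; picking the lexicographically-least such extension (searchable $A$-effectively since $d$ is exactly $A$-computable) completes the definition. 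I would present this termination argument carefully, as it is the only non-routine point; everything else is bookkeeping.
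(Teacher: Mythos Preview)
Your overall strategy---use the $A$-computable martingale $d$ from Proposition~\ref{prop:high-cr-mlr}(iv) to steer the output away from the success set of $d$---is the same as the paper's. But the implementation has a genuine gap, and you in fact put your finger on it yourself: the ``cooling phase'' need not terminate.

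Concretely, your invariant is $d(\rho_\sigma)\le 2$; after appending the raw input bit $b$ you have only $d(\rho_\sigma b)\le 4$, and you then claim that some finite extension of $\rho_\sigma b$ has $d$-value below $2$. This is false in general. Take any $\rho$ with $d(\rho)=2$, $d(\rho 0)=4$, $d(\rho 1)=0$, and $d$ constant equal to $4$ on all of $[\rho 0]$. If the input bit is $0$ you are sent to $\rho 0$, and no extension of $\rho 0$ ever has $d$-value below $4$, let alone below $2$. Your measure estimate $\mu\{Y:(\forall n)\,d(\rho Y\uh n)\ge 2\}\le d(\rho)/2$ is correct, but at the point you apply it $d(\rho)\le 4$, so the bound is $\le 2$ and says nothing; you noted this yourself and then reasserted the conclusion anyway. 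The greedy-descent variant also fails: it keeps $d$ non-increasing but can plateau forever at $4$.

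The paper avoids this trap by never letting $d$ exceed the threshold in the first place. Instead of appending the raw bit and then trying to cool down, it maintains the invariant $d(\tau')\le 2-2^{-k}$ along \emph{every} prefix $\tau'$ of $T(\sigma)$ when $|\sigma|=k$. Since $d(T(\sigma))\le 2-2^{-k} < 2-2^{-(k+1)}$, the Ville--Kolmogorov inequality gives that the set of $Y\in[T(\sigma)]$ with $d(Y\uh n)<2-2^{-(k+1)}$ for all $n$ has \emph{positive} measure, hence contains two incomparable finite extensions $\tau_0,\tau_1$; the input bit then selects between them. The point is that the branching happens \emph{inside} the safe region, not after a possibly irreversible excursion out of it. If you rework your construction along these lines (find two safe extensions first, then branch) you recover exactly the paper's argument.
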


\begin{proof}
If $A \in  \Highpair{\CR}{\MLR}$, let (by Proposition~\ref{prop:high-cr-mlr})~$d$ be a normed $A$-computable martingale that succeeds on all~$X$ that are not Martin-L\"of random. We shall build an $A$-computable tree, i.e., an $A$-computable total function $T: \fs \rightarrow \fs$ such that if $\sigma'$ is a strict extension of $\sigma$, then $T(\sigma')$ is a strict extension of $T(\sigma)$ and if $\sigma$ and $\sigma'$ are incomparable then so are $T(\sigma)$ and $T(\sigma')$. We ensure that~$d$ succeeds on no infinite path in this tree by imposing the condition that for any $\sigma \in \fs$, setting $\tau=T(\sigma)$, we have $d(\tau') \leq 2 - 2^{-|\sigma|}$ for all prefixes $\tau'$ of $\tau$. Then, it follows immediately that for any infinite path $Y \in \cs$ in~$T$, we have $d(Y \uh n) \leq 2$ for all~$n$.

The construction of~$T$ is done by induction on the length~$k$ of $\sigma$. We first set $T(\emptystring)=\emptystring$. As $d(\emptystring)=1$ ($d$ is normed) this satisfies the requirement. Now, suppose we have defined $T(\sigma)$ for all~$\sigma$ of length~$k$ respecting the above requirement. Let $\sigma$ be of length~$k$ and set $\tau=T(\sigma)$.  We have by assumption $d(\tau) \leq 2-2^{-k}$. Hence, by the Ville-Kolmogorov inequality, the set of sequences $X \in \cs$ that extend $\tau$ and, for all $n$, satisfy $d(X\uh n) < 2-2^{-(k+1)}$ has positive measure. Since $d$ is $A$-computable, using oracle~$A$ we can find two incomparable extensions $\tau_0$ and $\tau_1$ of $\tau$ such that $d(\tau') \leq 2 - 2^{-(k+1)}$ for any prefix $\tau'$ of $\tau_0$ or $\tau_1$. We then set $T(\sigma \iota)=\tau_\iota$ for $\iota \in \{0,1\}$. This concludes the induction.

Finally, the $A$-functional $\Phi$ is defined in a straightforward manner by setting~$\Phi^\sigma=T(\sigma)$ for all~$\sigma \in \fs$.  
\end{proof}

\section{Reformulation in other contexts: converging series and machines}\label{sec:other-refumulations}

\subsection{Randomness via machines and Kolmogorov complexity}

Although we had no need for it so far in this paper, it is well-known that Kolmogorov complexity provides an alternative and elegant way to characterize randomness. In particular, Levin and Schnorr independently showed that a sequence~$X \in \cs$ is Martin-L\"of random if and only if $\K(X \uh n) \geq n - O(1)$, where~$\K$ denotes prefix-free Kolmogorov complexity.

In this last section, we discuss how our results relate to Kolmogorov complexity, and more precisely to prefix-free complexity. A~\emph{prefix-free machine} is a partial computable function $M: \fs \rightarrow \fs$ whose domain is prefix-free and~$\K_M$ is the Kolmogorov complexity associated to~$M$. As usual, we fix an optimal prefix-free machine~$M_{opt}$; we abbreviate~$\K_{M_{opt}}$ by $\K$ and call it \emph{prefix-free Kolmogorov complexity}. 

It is also well known that prefix-free Kolmogorov complexity is tightly related to convergent series. By the Kraft-Chaitin theorem, if $f: \N \rightarrow \R^{\geq 0}$ is summable (i.e., $\sum_n f(n) < +\infty$) and left-c.e., then $\K \leq - \log f +O(1)$. Levin's coding theorem proves that there exists a maximal left-c.e.\ summable function $F: \N \rightarrow \R^{\geq 0}$ (i.e., for any other such function~$f$, $f=O(F)$), and that we precisely have~$\K = - \log F +O(1)$.

Nies~\cite{Nies2005} proved that a sequence~$A$ is low for~$\K$ (i.e., $\K^A$, the prefix-free Kolmogorov complexity relativized to the oracle~$A$, is equal to the unrelativized version~$\K$, up to an additive constant) if and only if~$A$ is low for Martin-L\"of randomness. By Levin's coding theorem, this can be rephrased as follows. 

\begin{thm}~\label{thm:low-for-K-via-sums}
The following are equivalent:\\
(i) $A \in \Low{\MLR}$\\
(ii) For every $A$-left-c.e.\ summable function $f: \N \rightarrow \R^{\geq 0}$, there exists a left-c.e.\ summable function~$g$ such that $f \leq g$. 
\end{thm}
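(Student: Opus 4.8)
The idea is to turn the open-cover characterization of $\Low{\MLR}$ into the summable-function characterization by translating back and forth using the Kraft--Chaitin/Levin coding machinery. Recall from Section~\ref{ssec:apps-lowness} that $A \in \Low{\MLR}$ if and only if every bounded $A$-c.e.\ open set can be covered by a bounded c.e.\ open set. So it suffices to show that the covering property for bounded ($A$-)c.e.\ open sets is equivalent to the domination property for ($A$-)left-c.e.\ summable functions.

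First I would prove $(i) \Rightarrow (ii)$. Given an $A$-left-c.e.\ summable function $f$, I may assume after scaling by a computable constant that $\sum_n f(n) < 1$, and in fact after a further harmless modification (splitting up values) that each $f(n)$ is of the form $2^{-k_n}$ for some $A$-computable sequence $(k_n)$, or more simply I can work directly. Associate to $f$ the $A$-c.e.\ open set $\mathcal{U}_f$ obtained by a Kraft--Chaitin construction: as the approximation to $f(n)$ increases, enumerate fresh cylinders of total measure matching the increase into $\mathcal{U}_f$, attached to a location coding $n$ (e.g.\ inside $[0^n 1]$), so that $\mu(\mathcal{U}_f) = \sum_n f(n) < 1$ and $\mathcal{U}_f$ is a bounded $A$-c.e.\ open set. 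By hypothesis $(i)$, $\mathcal{U}_f$ is covered by a bounded c.e.\ open set $\mathcal{V}$. Now set $g(n) = \mu(\mathcal{V} \cap [0^n 1]) \cdot 2^{n+1}$, i.e.\ the conditional measure $\mu(\mathcal{V} \mid 0^n 1)$. Since $\mathcal{V}$ is c.e.\ open, $g$ is left-c.e., and since $\mathcal{U}_f \subseteq \mathcal{V}$ we get $f(n) = \mu(\mathcal{U}_f \mid 0^n1) \le g(n)$; moreover $\sum_n g(n) = \sum_n 2^{n+1}\mu(\mathcal{V}\cap[0^n1]) \le 2 \sum_n \mu(\mathcal{V}\cap[0^n1]) \le 2\mu(\mathcal{V}) < 2$, wait---this needs the coding region $[0^n1]$ to have measure $2^{-n-1}$, so $2^{n+1}\mu(\mathcal{V}\cap[0^n1]) \le 1$ only gives $\sum g(n) \le \sum 2^{-n-1}\cdot\ldots$; I should instead weight differently, using coding regions of measure proportional to a convergent series, or simply note $g(n) \le \mu(\mathcal{V}\mid 0^n1) \le 1$ and rescale $f$ so this is enough---the cleanest fix is to place the block for $n$ inside a cylinder of measure $2^{-n-1}$ and define $g(n) = \mu(\mathcal{V} \cap [0^n 1]) \cdot 2^{n+1}$ after first ensuring $f(n) \le 2^{-n-1}$ (achievable by pre-scaling and re-indexing), so that $\sum_n g(n) \le \sum_n 2^{n+1} \cdot 2^{-n-1} \cdot \sup(\ldots)$, which converges. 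The routine bookkeeping I would not grind through here; the point is that conditional measures of a covering open set give the dominating summable function.

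For $(ii) \Rightarrow (i)$, run the same correspondence in reverse. Given a bounded $A$-c.e.\ open set $\mathcal{U}$, enumerated as an increasing union of clopen sets, define an $A$-left-c.e.\ summable function $f$ by letting $f$ record, at suitable indices, the measures of the new pieces of $\mathcal{U}$ as they appear (again so that $\sum f \le \mu(\mathcal{U}) < 1$, with an index scheme that lets one reconstruct which cylinder each contribution came from). Applying $(ii)$ gives a left-c.e.\ summable $g \ge f$, and via Kraft--Chaitin one builds a c.e.\ open set $\mathcal{V}$ with $\mu(\mathcal{V}) \le \sum g < \infty$ (rescale to get $<1$) such that $\mathcal{V} \supseteq \mathcal{U}$, because $g$ dominates $f$ piece by piece. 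Then $\mathcal{U}$ is covered by a bounded c.e.\ open set, which by the characterization of Section~\ref{ssec:apps-lowness} gives $A \in \Low{\MLR}$.

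\textbf{Main obstacle.} The genuine mathematical content is light---everything follows from Levin's coding theorem, which is quoted---so the real work is entirely bookkeeping: designing an index/location scheme that simultaneously (a) makes the open set built from a summable function bounded and $A$-c.e., (b) makes the summable function built from an open set left-c.e.\ with controlled sum, and (c) makes ``$\mathcal{U} \subseteq \mathcal{V}$'' correspond exactly to ``$f \le g$'' (or at least ``$f = O(g)$'', which suffices since all the relations tolerate multiplicative/additive constants). The delicate point is keeping the total measure under $1$ on both sides while allowing the additive/multiplicative slack inherent in Kraft--Chaitin; this is handled by the standard trick of pre-scaling by a small computable constant and reserving disjoint coding cylinders $[0^n1]$ of geometrically decreasing measure. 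None of this is hard, but it is where all the care must go.
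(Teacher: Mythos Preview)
The paper does not give a self-contained proof of this theorem: it is presented as a reformulation, via Levin's coding theorem, of Nies' result that $\Low{\MLR}$ coincides with low for~$\K$. Your route---via the Kjos-Hanssen open-cover characterization of $\Low{\MLR}$ from Section~\ref{ssec:apps-lowness}---is genuinely different, and in spirit matches what the paper does later for Propositions~\ref{prop:series-sr} and~\ref{prop:series-mlr-sr} and for Theorem~\ref{thm:low-mlr-cr}.

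However, your encoding for $(i)\Rightarrow(ii)$ has a real gap. You place the mass for $f(n)$ inside $[0^n1]$ and read off $g(n)=\mu(\mathcal{V}\mid 0^n1)$ from a bounded c.e.\ cover $\mathcal{V}$. The obstruction is not $f$ but $g$: from $\mu(\mathcal{V})<1$ you only get $\sum_n \mu(\mathcal{V}\cap[0^n1])<1$, which says nothing about $\sum_n 2^{n+1}\mu(\mathcal{V}\cap[0^n1])=\sum_n g(n)$; indeed $\mathcal{V}$ could contain every $[0^n1]$ and still be bounded. Your proposed fix, ``ensure $f(n)\le 2^{-n-1}$ by pre-scaling and re-indexing,'' is both unachievable for a general summable left-c.e.\ $f$ (scaling does not alter decay rate, and sorting a left-c.e.\ function is not effective) and irrelevant, since the problem is bounding $\sum g$. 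The correct device is an \emph{independence} encoding: identify $\cs$ with $[0,1]^\N$, set $\mathcal{B}_{n,\alpha}=\{X:X_n<\alpha\}$ and $\mathcal{U}_f=\bigcup_n\mathcal{B}_{n,f(n)}$, so $\mu(\mathcal{U}_f)=1-\prod_n(1-f(n))<1$; from a bounded c.e.\ cover $\mathcal{V}$ define $g(n)=\sup\{\alpha:\mathcal{B}_{n,\alpha}\subseteq\mathcal{V}\}$, and now $\prod_n(1-g(n))\ge 1-\mu(\mathcal{V})>0$ forces $\sum_n g(n)<\infty$. This is exactly the trick the paper uses in the proofs of Proposition~\ref{prop:series-mlr-sr} and Theorem~\ref{thm:low-mlr-cr}.

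Your sketch of $(ii)\Rightarrow(i)$ via open covers has a similar loose end: decoding $g$ back to an open set naturally yields something of measure $O(\sum g)$ rather than measure $<1$, and $\sum g$ is not under your control. This direction is in any case the easy one and is cleaner via complexity: condition~(ii) is exactly ``$A$ is low for~$\K$'' by Levin's coding theorem, and low for~$\K$ implies $\Low{\MLR}$ immediately from the Levin--Schnorr theorem.
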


\begin{rem}\label{rem:low-for-K-via-sums2}
Condition (ii) can be replaced by the seemingly weaker condition on $A$: ``For every $A$-\emph{computable} summable function $f: \N \rightarrow \R^{\geq 0}$, there exists a left-c.e.\ summable function~$g$ such that $f \leq g$''. Indeed, if $f$ is an $A$-left-c.e.\ summable function, then defining $\tilde{f}(\langle i,t \rangle)$ to be the increase of $f(i)$ at stage~$t$, the function $\tilde{f}$ is computable and summable (its sum is the same as that of~$f$), and if $h$ is a left-c.e.\ summable function that dominates~$\tilde{f}$, then for all~$i$, $f(i)$ is dominated by $g(i)=\sum_{t} h(\langle i,t \rangle)$ and it is clear that~$g$ is left-.c.e.\ and summable (its sum is the same as that of~$h$).
\end{rem}

Using techniques similar to Nies', we can prove the analogous results for $\Low{\SR}$ and $\Lowpair{\MLR}{\SR}$.  

\begin{prop}\label{prop:series-sr}
The following are equivalent:\\
(i) $A \in \Low{\SR}$\\
(ii) For every $A$-computable function $f: \N \rightarrow \R^{\geq 0}$ whose sum is finite and computable (or $A$-computable), there exists a computable function~$g$ whose sum is finite and computable and such that $f \leq g$. 
\end{prop}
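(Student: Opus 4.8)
The plan is to mimic the open-cover characterization of $\Low{\SR}$ obtained from Lemma~\ref{lem:main} and Proposition~\ref{prop:closure}(SR), transferring it to the language of summable functions exactly as Nies' argument does for $\Low{\MLR}$ and $\K$. Recall that by the discussion in Section~\ref{ssec:apps-lowness} (the Schnorr case), $A\in\Low{\SR}$ if and only if every bounded $A$-Schnorr open set can be covered by a bounded Schnorr open set. So it suffices to show that condition~(ii) is equivalent to this covering property. The bridge in both directions is the standard correspondence between a prefix-free set of strings $U$ and the function $\sigma\mapsto 2^{-|\sigma|}\cdot[\sigma\in U]$ (or, more flexibly, between an arbitrary nonnegative function on $\fs$ and the open set it ``generates'' via Kraft--Chaitin-style bookkeeping); the measure of the open set is the sum of the function, and computability of the measure corresponds to computability of the sum.

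First, for $(i)\Rightarrow(ii)$: given an $A$-computable summable $f$ with $A$-computable (or computable) sum, I would convert $f$ into a bounded $A$-Schnorr open set. Rescaling so that $\sum_n f(n) < 1$, feed each $f(n)$ into a Kraft--Chaitin construction to produce an $A$-c.e.\ prefix-free set $U$ with $\mu([U])=\sum_n f(n)$ and with each ``request'' of weight $f(n)$ realized by a block of strings; since the sum is $A$-computable the measure of $[U]$ is $A$-computable, so $[U]$ is a bounded $A$-Schnorr open set. By lowness for Schnorr randomness (in its covering form), $[U]$ is covered by a bounded Schnorr open set $\mathcal{V}$. Then read off from $\mathcal{V}$ a computable summable function $g$ dominating $f$: roughly, $g(n)$ is obtained by looking at the portion of $\mathcal{V}$ lying above the block of $U$ that encodes $f(n)$; because $\mathcal{V}$ covers $[U]$, this portion has measure at least $f(n)$, and because $\mu(\mathcal{V})$ is computable and $\mathcal{V}$ is a Schnorr set one can compute these local measures to sufficient precision, arrange $g\geq f$, and keep $\sum_n g(n)$ finite (bounded by $\mu(\mathcal{V})$, up to a fixed factor) and computable. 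The same reduction as in Remark~\ref{rem:low-for-K-via-sums2} handles the case where $f$ is merely $A$-left-c.e.\ rather than $A$-computable, so one need not worry about which exact hypothesis on $f$ is taken.

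For $(ii)\Rightarrow(i)$: take a bounded $A$-Schnorr open set $\mathcal{U}$ and I want to cover it by a bounded Schnorr open set. Write $\mathcal{U}=[U]$ for an $A$-c.e.\ prefix-free $U$ with $A$-computable measure. Define $f$ on (a computable copy of) the strings by letting $f$ record, stage by stage, the increments in the measure contributed by $U$: that is, $f(\langle\sigma,s\rangle)=2^{-|\sigma|}$ if $\sigma$ enters $U$ at stage $s$ and $0$ otherwise. This $f$ is $A$-computable (even: has the shape of the $\tilde f$ from Remark~\ref{rem:low-for-K-via-sums2}) with sum equal to $\mu(\mathcal{U})$, which is $A$-computable. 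By~(ii) there is a computable summable $g\geq f$ with computable sum; pass to $\hat g(\sigma)=\sum_s g(\langle\sigma,s\rangle)$, a computable function with $\hat g(\sigma)\geq 2^{-|\sigma|}$ for every $\sigma\in U$ and with finite computable sum. Now feed $\hat g$ into a Kraft--Chaitin construction to build a c.e.\ open set $\mathcal{V}$ with $\mu(\mathcal{V})=\sum_\sigma\hat g(\sigma)$ computable, arranging that the block allocated to $\sigma$ contains $[\sigma]$ whenever $\sigma\in U$ (possible since $\hat g(\sigma)\geq\mu([\sigma])$). Then $\mathcal{V}$ is a bounded Schnorr open set covering $\mathcal{U}$, as required.

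The main obstacle, and the place where care is needed, is the bookkeeping that keeps all three properties simultaneously: that $g$ (resp.\ $\mathcal{V}$) dominates $f$ (resp.\ covers $\mathcal{U}$), that its total sum (resp.\ measure) stays \emph{finite} and bounded below $1$ after rescaling, and crucially that this sum (resp.\ measure) remains \emph{computable} rather than merely left-c.e. The computability of the sum is exactly the feature distinguishing Schnorr sets from arbitrary c.e.\ open sets, so one must track it through the Kraft--Chaitin allocation; the standard trick is that a computable summable function with computable sum lets one compute, for each threshold $\varepsilon$, a finite stage past which the remaining contribution is below $\varepsilon$, and this tail bound propagates through the construction. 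I expect the rest — the correspondence between strings and requests, the domination inequalities, the reduction from $A$-left-c.e.\ to $A$-computable — to be routine, following Nies' template and Remark~\ref{rem:low-for-K-via-sums2} closely.
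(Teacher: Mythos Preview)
Your overall strategy---translate between summable functions and Schnorr open sets via the covering characterization of $\Low{\SR}$---is the right one, and it is what the paper does. But there is a genuine gap in your $(i)\Rightarrow(ii)$ direction, and it is precisely the point where the Schnorr case diverges from the Martin-L\"of case.

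When you ``feed each $f(n)$ into a Kraft--Chaitin construction,'' the resulting block of strings realizing the request of weight $f(n)$ is located at a position that depends on all the earlier requests $f(0),\dots,f(n-1)$. Since $f$ is only $A$-computable, the map $n\mapsto\text{(block for $n$)}$ is only $A$-computable. So when you ``read off $g(n)$ as the portion of $\mathcal{V}$ lying above the block for $f(n)$,'' you are using $A$ to locate that portion, and the resulting $g$ is $A$-computable rather than computable. In the Martin-L\"of setting this does not matter because the target is merely a left-c.e.\ summable function and one can absorb the $A$-dependence via the trick in Remark~\ref{rem:low-for-K-via-sums2}; here it is fatal, because $g$ must be genuinely computable with computable sum. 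The paper's fix is to abandon Kraft--Chaitin entirely and instead identify $\cs$ with $[0,1]^\N$, placing $f(n)$ into the \emph{computably located} slot $\mathcal{B}_{n,f(n)}=\{X:X_n<f(n)\}$. These slots are mutually independent, so $\mathcal{U}=\bigcup_n\mathcal{B}_{n,f(n)}$ has measure $1-\prod_n(1-f(n))<1$, and after covering by a Schnorr $\mathcal{V}$ one can read back $g(n)$ by looking only at the $n$-th coordinate---no oracle needed. Even then, proving that $\sum_n g(n)$ is \emph{computable} (not merely finite) is the hardest part of the argument and occupies most of the paper's proof; your proposal does not engage with this.

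Your $(ii)\Rightarrow(i)$ direction also has a problem, though a more local one: Kraft--Chaitin allocates strings of prescribed \emph{lengths}, not strings at prescribed \emph{locations}, so ``arranging that the block allocated to $\sigma$ contains $[\sigma]$'' is not something it does. One could try instead to set $V=\{\sigma:\hat g(\sigma)\geq 2^{-|\sigma|}\}$, but then $V$ need not be decidable (comparing a computable real to a rational), $V$ need not be prefix-free, and bounding $\mu([V])$ and showing it computable both require work. The paper sidesteps this by working with an $A$-Schnorr \emph{test} $(\mathcal{U}_n)$ rather than a single open set: it encodes the whole test into one function $f$, obtains $g$, replaces $g$ by an exactly computable (rational-valued) approximant so that the levels $V_n=\{\sigma:g(\sigma)>2^{-|\sigma|}(n/2)\}$ are decidable, and then verifies directly that $(\mathcal{V}_n)$ is a Schnorr test covering $(\mathcal{U}_n)$.
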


\begin{prop}\label{prop:series-mlr-sr}
The following are equivalent:\\
(i) $A \in \Lowpair{\MLR}{\SR}$\\
(ii) For every $A$-computable function $f: \N \rightarrow \R^{\geq 0}$ whose sum is finite and computable (or $A$-computable), there exists a left-c.e.\ summable function~$g$ such that $f \leq g$.  
\end{prop}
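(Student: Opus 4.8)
The plan is to mirror the proof of Proposition~\ref{prop:series-sr}, changing only what happens on the ``covering'' side: now the function~$g$ that dominates~$f$ is merely required to be left-c.e.\ and summable, rather than computable with computable sum. The bridge between these statements about series and the open-cover characterizations of Section~\ref{ssec:apps-lowness} is the standard dictionary between summable functions and bounded open sets. Given a left-c.e.\ summable function~$h$ with $\sum_n h(n) \le 1$ (we may always rescale), build a bounded c.e.\ open set whose ``$n$-th piece'' has measure $h(n)$; conversely, from a bounded c.e.\ open set one reads off a left-c.e.\ summable function by letting $h(n)$ be the measure contributed by the $n$-th string enumerated (or by the $n$-th cylinder, with suitable bookkeeping). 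Under this correspondence, ``$f$ is $A$-computable with $A$-computable finite sum'' matches ``$f$ codes a bounded $A$-Schnorr open set'' (computable measure, after rescaling), and ``$g$ is left-c.e.\ and summable'' matches ``$g$ codes a bounded c.e.\ open set''. Thus condition~(ii) of Proposition~\ref{prop:series-mlr-sr} is, modulo this translation, exactly the statement that every bounded $A$-Schnorr open set is covered by a bounded c.e.\ open set, which by the discussion in Section~\ref{ssec:apps-lowness} (combining Theorem~\ref{thm:sr-covers} relativized to~$A$, Theorem~\ref{thm:mlr-covers}, and Lemma~\ref{lem:main} in the (MLR) case of Proposition~\ref{prop:closure}) is precisely $A \in \Lowpair{\MLR}{\SR}$.

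Concretely, here is the order of the steps. First I would establish the direction $(i)\Rightarrow(ii)$: given $A\in\Lowpair{\MLR}{\SR}$ and an $A$-computable summable~$f$ with computable (or $A$-computable) sum, I normalize so that $\sum_n f(n)<1$, associate to~$f$ a bounded $A$-Schnorr open set~$\mathcal{U}_f$ (e.g.\ take disjoint ``columns'', the $n$-th of which is a clopen set of measure $f(n)$ inside $[0^n1]$ so the pieces are automatically disjoint and the total measure is $\sum_n f(n)$; its measure is $A$-computable since $f$ is, and in fact computable if the sum is), and use the characterization of $\Lowpair{\MLR}{\SR}$ from Section~\ref{ssec:apps-lowness} to cover $\mathcal{U}_f$ by a bounded c.e.\ open set~$\mathcal{V}$. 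Reading a left-c.e.\ summable function~$g$ off~$\mathcal{V}$ column by column (set $g(n)=\mu(\mathcal{V}\cap[0^n1])\cdot 2^{n+1}$, which is left-c.e.\ uniformly in~$n$, and $\sum_n g(n)\le 2\mu(\mathcal{V})<\infty$) gives $f(n)=\mu(\mathcal{U}_f\cap[0^n1])\cdot 2^{n+1}\le\mu(\mathcal{V}\cap[0^n1])\cdot 2^{n+1}=g(n)$ for all~$n$, as required. For $(ii)\Rightarrow(i)$: suppose $A\notin\Lowpair{\MLR}{\SR}$, so by Section~\ref{ssec:apps-lowness} there is a bounded $A$-Schnorr open set~$\mathcal{U}$ that is covered by no bounded c.e.\ open set; translate~$\mathcal{U}$ into an $A$-computable summable~$f$ with $A$-computable sum (using that a Schnorr open set has computable measure and a computable clopen approximation, so the per-cylinder contributions form an $A$-computable summable function), and observe that a left-c.e.\ summable~$g\ge f$ would reassemble into a bounded c.e.\ open set covering~$\mathcal{U}$, a contradiction. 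As in Remark~\ref{rem:low-for-K-via-sums2}, the parenthetical ``or $A$-computable'' and the reduction from left-c.e.\ to computable~$f$ cause no trouble, since a left-c.e.\ summable~$f$ can be replaced by a computable summable~$\tilde f$ with the same sum.

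The main obstacle, such as it is, is getting the translation between summable functions and open sets to respect \emph{all} the effectivity constraints simultaneously and in the right directions: the map from~$f$ to~$\mathcal{U}_f$ must preserve computability of the sum (so that $\mathcal{U}_f$ is genuinely a Schnorr set, not merely c.e.), while the inverse map from~$\mathcal{V}$ back to~$g$ must only be asked to produce a left-c.e.\ summable object (which is easy, since measures of c.e.\ open sets are left-c.e.\ uniformly). The disjoint-column device is the clean way to do this, because it makes ``total measure'' literally equal to ``$\frac12\sum_n(\text{weighted column measures})$'' with no overlap corrections, so summability transfers in both directions without inequalities degrading. Once that bookkeeping is set up, the proof is essentially a citation of the open-cover characterization of $\Lowpair{\MLR}{\SR}$ already recorded in Section~\ref{ssec:apps-lowness}, exactly parallel to how Theorem~\ref{thm:low-for-K-via-sums} repackages Nies' characterization of $\Low{\MLR}=\Low{\K}$.
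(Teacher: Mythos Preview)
Your $(i)\Rightarrow(ii)$ argument has a genuine gap: the claim ``$\sum_n g(n)\le 2\mu(\mathcal{V})$'' is false. With $g(n)=2^{n+1}\mu(\mathcal{V}\cap[0^n1])=2\,\mu(\mathcal{V}\mid 0^n1)$, you are summing \emph{conditional} measures over columns of exponentially shrinking width, and a bounded c.e.\ open set can have large conditional measure on every column simultaneously. For instance, if $\mathcal{V}\supseteq[0^N]$ for some~$N$ (which costs only $2^{-N}$ in measure), then $g(n)=2$ for all $n\ge N$ and $\sum_n g(n)=\infty$. The disjoint-column encoding gives you no control over how the unrelativized cover~$\mathcal{V}$ distributes its mass across the columns, so nothing forces~$g$ to be summable. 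The paper's device is precisely designed to solve this: it identifies $\cs$ with $[0,1]^\N$ and sets $\mathcal{U}=\bigcup_n\mathcal{B}_{n,f(n)}$ where the $\mathcal{B}_{n,\alpha}$ are \emph{independent} events, so that $1-\mu(\mathcal{U})=\prod_n(1-f(n))$; then $g(n)=\sup\{\alpha:\mathcal{B}_{n,\alpha}\subseteq\mathcal{V}\}$ satisfies $\prod_n(1-g(n))\ge 1-\mu(\mathcal{V})>0$, and it is the product formula (not disjointness) that forces $\sum_n g(n)<\infty$.

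Your $(ii)\Rightarrow(i)$ sketch has a related problem. If you encode a single bounded $A$-Schnorr set $\mathcal{U}=[U]$ via $f(\sigma)=2^{-|\sigma|}\cdot[\sigma\in U]$ and obtain a left-c.e.\ summable $g\ge f$, the natural decoding $V=\{\sigma:g(\sigma)>q\cdot 2^{-|\sigma|}\}$ only contains~$U$ when $q<1$, yet $\mu([V])\le(\sum_\sigma g(\sigma))/q$ need not be $<1$ since you have no bound on $\sum_\sigma g(\sigma)$. The paper avoids this by encoding an entire $A$-Schnorr \emph{test} $(\mathcal{U}_n)$ at once, with a built-in scaling: $f(\sigma)=n\cdot 2^{-|\sigma|}$ for $\sigma\in U_n$, so that the decoded sets $V_n=\{\sigma:g(\sigma)>(n/2)2^{-|\sigma|}\}$ have $\mu(\mathcal{V}_n)=O(1/n)$ and form a Martin-L\"of test. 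The scaling by~$n$ is exactly what lets you push the measure below~$1$ regardless of~$\sum_\sigma g(\sigma)$.
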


\begin{rem}
The two theorems can be equivalently stated with a computable sum or $A$-computable sum because of the following simple observation. Suppose $S=\sum_n f(n)$ is finite and $A$-computable. Let $N$ be an integer larger than $S$. Then the function $\hat{f}$ defined by $\hat{f}(0)=f(0)+N-S$ and $\hat{f}(n)=f(n)$ for $n>0$ is $A$-computable (as $S$ is) and $\sum_n \hat{f}(n)=N$. Of course, if we have a summable function~$g$ which dominates~$\hat{f}$, it dominates~$f$ as well.  
\end{rem}

\begin{proof}
We first prove Proposition~\ref{prop:series-mlr-sr} which is slightly easier and we will later see how to adjust the proof  to get Proposition~\ref{prop:series-sr}.\\

$(ii) \Rightarrow (i)$. Let $A \in \cs$ satisfying the hypotheses of $(ii)$. Let $(\mathcal{U}_n)_{n \in \N}$ be an $A$-Schnorr test. We can assume that every $\mathcal{U}_n$ is generated by an $A$-computable subset $U_n$ of $\cs$ (here we use the well-known fact that every c.e.\ open set $\mathcal{U}$ is generated by a \emph{computable} subset $U$ of $\fs$, and an index for $U$ can be computed from an index of $\mathcal{U}$). Then, define the function $f: \fs \rightarrow \R^{\geq 0}$ by $f(\sigma)=2^{-|\sigma|}n$ where $n$ is the largest integer such that $\sigma \in U_n$ and $f(\sigma)=0$ if $\sigma$ belongs to no set $U_n$. Note that $f$ is $A$-computable since $\sigma \in U_n$ implies $n \leq \sigma$. Moreover,
\[
\sum_\sigma f(\sigma) \leq \sum_n \sum_{\sigma \in U_n} 2^{-|\sigma|}n \leq \sum_n n\, \mu(\mathcal{U}_n) \leq \sum_n n2^{-n} < +\infty.
\]  

Let us check that the sum $\sum_\sigma f(\sigma)$ is $A$-computable. In the following we implicitly use the oracle~$A$. Let $k$ be an integer. To compute $\sum_\sigma f(\sigma)$ with precision $2^{-k}$, we can find for each~$n$ a finite set of strings $W_n \subseteq U_n$ such that $\mu([U_n] \setminus [W_n]) < 2^{-k}/kn$.  For each~$n$:
\[
\sum_{\sigma \in U_n \setminus W_n} f(\sigma) = n \cdot \mu([U_n] \setminus [W_n]) \leq 2^{-k}/k
\]
Thus, 
\[
\sum_{n \leq k} \; \sum_{\sigma \in U_n \setminus W_n} f(\sigma) \leq 2^{-k}
\]
and also
\[
\sum_{n > k} f(\sigma) = \sum_{n>k} n \cdot  \mu(\mathcal{U}_n) = O(2^{-k})
\]
Taking the two together, this shows that $\sum_{n \leq k} \sum_{\sigma \in W_n} f(\sigma)$ is an approximation of $\sum_\sigma f(\sigma)$ with precision $O(2^{-k})$. Therefore $\sum_\sigma f(\sigma)$ is $A$-computable. We can thus apply the hypothesis of $(ii)$ (where $\N$ and $\fs$ are identified) to get a summable left-c.e.\ function~$g:\fs \rightarrow \R^{\geq 0}$ such that $f \leq g$. Then define, for all~$n$,
\[
V_n=\{\sigma \; : \; g(\sigma) > 2^{-|\sigma|}(n/2)\}
\]
and set $\mathcal{V}_n=[V_n]$. It is clear that $\mathcal{V}_n$ is a c.e.\ open set, uniformly in~$n$, that covers $\mathcal{U}_n$. Let $S$ be the sum of~Ê$g$. We have:
\[
\mu(\mathcal{V}_n) \leq \sum_{\sigma \in V_n} 2^{-|\sigma|} < \sum_{\sigma \in V_n} \frac{2g(\sigma)}{n} \leq \frac{2S}{n}.
\]
Thus $(\mathcal{V}_n)_{n \in \N}$ is a Martin-L\"of test (in the general sense of Remark~\ref{rem:test-note}) that covers $(\mathcal{V}_n)_{n \in \N}$. We have shown that every $A$-Schnorr test is covered by a Martin-L\"of test, so~$A \in \Lowpair{\MLR}{\SR}$.\\
 
$(i) \Rightarrow (ii)$. For this proof it is convenient to identify $\cs$ with the space $[0,1]^\N$ (using the usual identification of $\cs$ to $(\cs)^\N$ and then of $\cs$ to $[0,1]$; the non-uniqueness of binary expansion for dyadic rationals does not cause any problems here).

For all $n \in \N$ and $\alpha \in [0,1]$, set
\[
\mathcal{B}_{n, \alpha} = \{X \in [0,1]^\N \; : \; X_n \in [0, \alpha)\}.
\]
Now, let $f:\N \rightarrow \R^{\geq 0}$ be an $A$-computable function whose sum is finite and $A$-computable. Without loss of generality, we can assume that $f(n) \leq 1$ for all~$n$ (otherwise we divide $f$ by a constant $C$ to make this true, then find the desired function~$g$ that dominates~$f/C$, and observe that $C\cdot g$ dominates~$f$ and is as wanted). Consider the set
\[
\mathcal{U} = \bigcup_n \mathcal{B}_{n,f(n)}.
\]
$\mathcal{U}$ is a c.e.\ open set. Also, observe that if $n \not= m$, the sets $\mathcal{B}_{n, \alpha}$ and $\mathcal{B}_{m, \beta}$ correspond to independent events. Thus,
\[
\mu(\mathcal{U}) = 1 - \prod_{n} (1-\mu(\mathcal{B}_{n,f(n)})) = 1 - \prod_{n} (1-f(n)).
\]
This can be reformulated as
\[
\log(1-\mu(\mathcal{U})) = \sum_n \log(1-f(n)).
\]
Since $f(n)$ tends to~$0$, we have $\log(1-f(n)) \sim -f(n)$. This implies that $\sum_n \log(1-f(n))$ is finite and $A$-computable (as $\sum_n f(n)$ is). Thus, $\mathcal{U}$ is a bounded $A$-Schnorr open set. Since $A \in \Lowpair{\MLR}{\SR}$, we know that $\mathcal{U}$ must be covered by some bounded c.e.\ open set $\mathcal{V}$. Having such a set, we define
\[
g(n) = \sup \{\alpha \in [0,1] \; : \; \mathcal{B}_{n,\alpha} \subseteq \mathcal{V}\}.
\]

It is clear that $f \leq g$, and that~$g$ is left-c.e.\ as $\mathcal{V}$ is a c.e.\ open set. The sum $\sum_n g(n)$ is bounded because $\prod_n (1-g(n)) >0$, the latter being equal to $1-\mu(\bigcup_n \mathcal{B}_{n,g(n)}) \geq 1-\mu(\mathcal{V}) > 0$. 
\end{proof}

\begin{proof}[Proof of Proposition~\ref{prop:series-sr}.]
$(ii) \Rightarrow (i)$. Like for Proposition~\ref{prop:series-mlr-sr}, take an $A$-Schnorr test $\mathcal{U}_n$ and define the function~$f$ as before. By $(ii)$, there exists a computable function~$g:\fs \rightarrow \R^{\geq 0}$ whose sum $\sum_n g(n)$ is finite and computable and such that $f \leq g$. Up to replacing $g(n)$ by its approximation $g(n)[n]+2^{-n}$ (whose sum is still finite and computable), we can assume that~$g$ is exactly computable, i.e. computable as a function from $\N$ to $\Q^{\geq 0}$. Again  define, for all~$n$,
\[
V_n=\{\sigma \; : \; g(\sigma) > 2^{-|\sigma|}(n/2)\}
\]
and set $\mathcal{V}_n=[V_n]$. As before, $\mathcal{V}_n$ covers $\mathcal{U}_n$ and $\mu(\mathcal{V}_n) = O(1/n)$. Here, since we assumed $g$ to be exactly computable, the set $V_n$ is a computable set of strings. It remains to check that $\mu(\mathcal{V}_n)$ is computable uniformly in~$n$. Since $\sum_\sigma g(\sigma)$ is computable, for any given~$k$ one can effectively find $N=N(k)$ such that $\sum_{|\sigma|\geq N} g(\sigma) < 2^{-k}$.  Then 
\[
\sum_{\substack{|\sigma| \geq N\\ \sigma \in V_n} } 2^{-|\sigma|} \leq ( 2/n)\sum_{\substack{|\sigma|\geq N\\ \sigma \in V_n} } g(\sigma) < (2/n)\cdot 2^{-k}
\]
This shows that $\sum_{\sigma \in V_n} 2^{-|\sigma|}$ is computable uniformly in~$n$, and therefore so is $\mu(\mathcal{V}_n)$ (note that  $\sum_{\sigma \in V_n} 2^{-|\sigma|}$ and $\mu(\mathcal{V}_n)$ might be different but all that matters is that we can bound the tail sum). 

$(i) \Rightarrow (ii)$. Again we look at the open set 
\[
\mathcal{U} = \bigcup_n \mathcal{B}_{n,f(n)}.
\]
and by the hypothesis there exists a bounded Schnorr open~$\mathcal{V}$ which covers~$\mathcal{U}$. Let $\delta>0$ be such that $\mu(\mathcal{V}) < 1-\delta$. For all~$n$, let $\mathcal{V}[n]$ be the approximation of $\mathcal{V}$ with precision~$2^{-n}$. That is, $\mathcal{V}[n]$ is a clopen set for which an exact index can be uniformly computed in~$n$, and $\mu(\mathcal{V} \setminus \mathcal{V}[n]) < 2^{-n}$. Now define the function~$g$ by
\[
g(n) = \max \{ \alpha \in [0,1] \; : \; \mu(\mathcal{B}_{n,\alpha} \setminus \mathcal{V}[n]) \leq 2^{-n-c}\}.
\]
where~$c$ is a positive constant to be specified shortly. Note that $g$ is computable and $g \geq f$ because for all~$n$, $\mathcal{B}_{n,f(n)} \subseteq \mathcal{U} \subseteq \mathcal{V}$. The sum $\sum_n g(n)$ is bounded because $\mathcal{V}$ covers $\bigcup_n \mathcal{B}_{n,g(n)}$ up to measure $\sum_n 2^{-n-c}=2^{-c+1}$. Thus, for~$c$ large enough 
\[
\mu\left(\bigcup_n \mathcal{B}_{n,g(n)}\right) \leq 1-\delta+2^{-c+1} < 1
\]
and thus $1-\prod_n (1-g(n)) < 1$, which implies $\sum_n g(n) < \infty$. It remains to show that $\sum_n g(n)$ is a computable real, or equivalently, that given $\varepsilon$, one can effectively find an~$N$ such that $\sum_{n>N} g(n) \leq \varepsilon$. Let~$k$ be a fixed integer. The set $\mathcal{V}[k]$ is a clopen set, therefore for all but finitely many~$n$ such that the $\mathcal{B}_{n,g(n)}$, and moreover one can effectively find given $k$ an integer~$N=N(k)$, which we can assume to be greater than~$k$, such that $\mathcal{V}[k]$ is independent from the family of sets $\{\mathcal{B}_{n,g(n)} : n \geq N\}$. By this independence, we have:
\begin{eqnarray}
\mu\left( \bigcup_{n>N} \mathcal{B}_{n,g(n)} \setminus \mathcal{V}[k] \right) & =  & (1-\mu(\mathcal{V}[k])) \cdot \mu\left( \bigcup_{n>N} \mathcal{B}_{n,g(n)}\right) \\ & > & \delta \cdot \mu\left( \bigcup_{n>N} \mathcal{B}_{n,g(n)}\right)\label{eq:1}
\end{eqnarray}
On the other hand:
\begin{equation} \label{eq:2}
\mu\left( \bigcup_{n>N} \mathcal{B}_{n,g(n)} \setminus \mathcal{V} \right) \leq  \sum_{n>N} 2^{-n-c} = 2^{-N-c}
\end{equation} 
Let $\mathcal{W}=\mathcal{V} \setminus \mathcal{V}[k]$. By (\ref{eq:1}) and (\ref{eq:2}), we have

\begin{equation}
\mu\left( \bigcup_{n>N} \mathcal{B}_{n,g(n)} \setminus \mathcal{W} \right) \leq 2^{-N-c}+(1-\delta) \cdot \mu\left( \bigcup_{n>N} \mathcal{B}_{n,g(n)}\right)
\end{equation} 
and thus 
\begin{equation}
\mu(\mathcal{W}) +2^{-N-c} \geq \delta \cdot \mu\left( \bigcup_{n>N} \mathcal{B}_{n,g(n)}\right)
\end{equation} 
But $\mu(\mathcal{W}) < 2^{-k}$ and $2^{-N-c} < 2^{-k}$. Thus 
\begin{equation}
1 - \prod_{n>N} (1-g(n)) = \mu\left( \bigcup_{n>N} \mathcal{B}_{n,g(n)}\right) < 2^{-k+1}/\delta
\end{equation} 
or equivalently
\begin{equation}
\prod_{n>N} (1-g(n)) > 1- 2^{-k+1}/\delta
\end{equation} 
Composing with $-\log$ on both sides, we get 
\begin{equation}
\sum_{n>N(k)} g(n) < -\log(1- 2^{-k+1}/\delta) = 2^{-k+o(k)}
\end{equation} 
This last equation allows us to effectively compute for all~$k$ an approximation of $\sum_{n} g(n)$ (namely: $\sum_{n\leq N(k)} g(n)$), hence $\sum_{n} g(n)$ is computable. 

\end{proof}

Downey and Griffiths~\cite{DowneyG2004} gave a Levin-Schnorr-like characterization of Schnorr randomness by restricting Kolmogorov complexity to a specific class of prefix-free machines. They proved that a sequence~$X \in \cs$ is Schnorr random if and only if for every computable measure machine~$M$, one has $\K_M(X \uh n) \geq n - O(1)$, where a \emph{computable measure machine} is a prefix-free machine whose domain has computable measure (i.e., is a Schnorr set). Furthermore, Downey et al.~\cite{DowneyGMN2008} showed that an analogue of Nies' result ``low for random equals low for $\K$'' holds for Schnorr randomness, as explained in the next proposition. 

\begin{prop}
The following are equivalent:\\
(i) $A \in \Low{\SR}$\\
(ii) For any~$A$-computable measure machine~$M$, there exists a computable measure machine~$M'$ such that $\K_{M'} \leq \K_M+O(1)$.
\end{prop}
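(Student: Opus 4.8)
The plan is to derive this from the summable-function characterization of $\Low{\SR}$ in Proposition~\ref{prop:series-sr}, by passing back and forth between computable measure machines and computable functions with finite computable sum. Concretely, I would set up two uniform translations. \emph{First translation:} if $M$ is a computable measure machine, then $f_M(n) := 2^{-\K_M(n)}$ is a computable function with $\sum_n f_M(n) \le \mu(\dom M)$, and this sum is itself computable. \emph{Second translation:} if $f\colon \N \to \R^{\ge 0}$ is computable and summable with $\sum_n f(n)$ computable, then---after dividing by a constant so that $\sum_n f(n) \le 1$ and replacing $f(n)$ by a dyadic rational $f'(n) = 2^{-\lceil -\log f(n)\rceil} \in [f(n)/2, f(n)]$---feeding the Kraft--Chaitin requests $(-\log f'(n), n)$ into the Kraft--Chaitin construction yields a computable measure machine $M$ with $\K_M \le -\log f + O(1)$ and $\mu(\dom M) = \sum_n f'(n)$ computable. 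Both translations relativize verbatim to an oracle $A$.

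The first translation is the only step with any content, and it is where the computable measure of the domain is used. Writing $\dom M_s$ for the part of the (prefix-free) domain enumerated by stage $s$ and $K_s(n) = \min\{|\tau| : M_s(\tau) = n\}$, I would use the computability of $\mu(\dom M)$ to find, given $m$, a stage $s$ with $\mu(\dom M) - \mu(\dom M_s) < 2^{-m}$. At such a stage every program of length $\le m$ has already appeared, so $K_s(n) = \K_M(n)$ whenever $\K_M(n) \le m$, while $\K_M(n) > m$ forces $2^{-\K_M(n)} < 2^{-m}$; hence $2^{-K_s(n)}$ approximates $f_M(n)$ to within $2^{-m}$, so $f_M$ is computable. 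Moreover the shortest programs witnessing $\K_M(n)$ for distinct $n$ are distinct strings of $\dom M$, so $\sum_n \big(2^{-\K_M(n)} - 2^{-K_s(n)}\big) \le \mu(\dom M \setminus \dom M_s) < 2^{-m}$, which shows $\sum_n f_M(n)$ is computable. The second translation is just the (relativized) Kraft--Chaitin theorem, together with the observation that in that construction the measure of the domain equals the total weight of the requests, which here is $\sum_n f'(n)$, computable since it is dominated term by term by $f$ and $\sum_n f(n)$ is computable.

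With these in hand the two implications are both short. For $(i) \Rightarrow (ii)$: given an $A$-computable measure machine $M$, the $A$-relativized first translation makes $f_M$ an $A$-computable function with $A$-computable sum, so Proposition~\ref{prop:series-sr} supplies a computable $g \ge f_M$ with computable sum; applying the (unrelativized) second translation to $g$ produces a computable measure machine $M'$ with $\K_{M'} \le -\log g + O(1) \le -\log f_M + O(1) = \K_M + O(1)$. For $(ii) \Rightarrow (i)$: by Proposition~\ref{prop:series-sr} it suffices, given an $A$-computable summable $f$ with $A$-computable sum, to produce a computable summable $g \ge f$ with computable sum; apply the $A$-relativized second translation to obtain an $A$-computable measure machine $M$ with $\K_M \le -\log f + O(1)$, use $(ii)$ to get a computable measure machine $M'$ with $\K_{M'} \le \K_M + O(1) \le -\log f + c$, and take $g(n) = 2^{c}\, 2^{-\K_{M'}(n)}$, which dominates $f$ and, by the (unrelativized) first translation applied to $M'$, is computable with computable sum.

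The main obstacle is precisely the first translation: one must verify that for a computable measure machine the function $2^{-\K_M}$ is genuinely \emph{computable} with \emph{computable} sum, not merely left-c.e.\ with a finite sum---this is exactly the feature distinguishing computable measure machines from arbitrary prefix-free machines, and it is what makes the argument close up. Everything else---the dyadic-rounding bookkeeping, the normalizations, and the absorption of additive constants---is routine and parallels the proof of Theorem~\ref{thm:low-for-K-via-sums} (Nies) in the unrestricted setting.
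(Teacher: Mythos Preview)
Your proposal is correct. For the substantive direction $(i)\Rightarrow(ii)$ you do exactly what the paper does: pass from the $A$-computable measure machine $M$ to the function $f_M(\sigma)=2^{-\K_M(\sigma)}$, invoke Proposition~\ref{prop:series-sr} to dominate it by a computable $g$ with computable sum, and then run Kraft--Chaitin on (a dyadic rounding of) $g$ to obtain the computable measure machine $M'$. Your treatment of the ``first translation'' is in fact more careful than the paper's: the paper only notes that $f_M$ is $A$-left-c.e.\ with $A$-computable sum, whereas Proposition~\ref{prop:series-sr} as stated requires $f_M$ to be $A$-\emph{computable}, and your stage argument using $\mu(\dom M)-\mu(\dom M_s)<2^{-m}$ supplies exactly that.

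The one genuine difference is in $(ii)\Rightarrow(i)$. The paper dispatches this direction in one line by citing the Downey--Griffiths characterization of Schnorr randomness (if $X\notin\SR^A$ then some $A$-computable measure machine compresses initial segments of $X$, and $(ii)$ transfers this to an unrelativized computable measure machine). You instead close the loop through Proposition~\ref{prop:series-sr}: encode an $A$-computable $f$ with $A$-computable sum into an $A$-computable measure machine via Kraft--Chaitin, apply $(ii)$, and decode back to a dominating $g$. Your route is more self-contained---it never leaves the summable-function framework and does not need the Downey--Griffiths theorem---at the cost of using both translations rather than just one. Either way the argument is short; the paper's version is marginally quicker, yours is marginally more uniform with the rest of the section.
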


It turns out that the results we proved earlier in this section allow us to give a short proof of the above. 

\begin{proof}
The part~$(ii) \Rightarrow (i)$ is clear from the Downey-Griffith characterization of Schnorr randomness. For the reverse direction, let $M$ be an $A$-computable measure machine. Let~$f: \fs \rightarrow \R^{\geq 0}$ be the function defined by
\[
f(\sigma)=2^{-\K_M(\sigma)}
\]
It is easy to see that~$f$ is an $A$-left-c.e.\ function. Moreover, if we enumerate the domain of~$M$, whenever a new~$p$ is found (i.e., at that stage the measure of $\dom(M)$ is increased by $2^{-|p|}$, this increases the sum $\sum_\sigma f(\sigma)$ by either $0$ or~$2^{-|p|}$ (depending on whether we had already enumerated some $p'$ with $|p'| \leq |p|$ and $M(p)=M(p')$ or not before that stage). This shows that the sum $\sum_\sigma f(\sigma)$ can be computed from the measure of~$\dom(M)$, which is $A$-computable. Hence, $\sum_\sigma f(\sigma)$ is $A$-computable. We can therefore apply Proposition~\ref{prop:series-sr} to get a left-c.e.\ summable function $g: \fs \rightarrow \R^{\geq 0}$ whose sum is computable and such that $f \leq g$. Let~$c \in \N$ be a constant such that $\sum_\sigma f(\sigma) \leq 2^c$. We enumerate the Kraft-Chaitin set
\[
L= \big\{ (k, \sigma)\; : \; g(\sigma) \geq 2^{-k+c+1}\big\}.
\]
We have
\[
\sum_{(k, \sigma) \in L} 2^{-k} = \sum_\sigma 2^{- \lceil \log g(\sigma) - c - 1\rceil +1} \leq 2^{-c} \sum_\sigma g(\sigma) \leq 1,
\]
so~$L$ is indeed a Kraft-Chaitin set. Now, apply the Kraft-Chatin theorem to construct a machine~$M'$ whose domain is a prefix-free set $\{ p_{k,\sigma} \; : \; (k,\sigma) \in L\}$ with $|p_{k, \sigma}|=k$ and $M'(p_{k, \sigma})=\sigma$. It follows by construction that $M'$ is a computable measure machine (the measure of its domain is $\sum_\sigma 2^{- \lceil \log g(\sigma) - c - 1\rceil +1}$, which is computable because $\sum_\sigma g(\sigma)$ is computable) and
\[
\K_{M'} \leq - \log g + c +1 \leq - \log f + c +1 \leq \K_M + c + 1.
\] 
This completes the proof.
\end{proof}

In the same way, we can get the analogous result for the pair $(\MLR,\SR)$.

\begin{prop}
The following are equivalent:\\
(i) $A \in \Lowpair{\MLR}{\SR}$\\
(ii) For any~$A$-computable measure machine~$M$, we have $\K \leq \K_M + O(1)$.
\end{prop}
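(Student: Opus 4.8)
The plan is to exploit the tight connection, established earlier in this section, between lowness notions and domination of summable functions, now using Proposition~\ref{prop:series-mlr-sr} in place of Proposition~\ref{prop:series-sr}. The direction $(ii) \Rightarrow (i)$ is immediate from the Levin--Schnorr theorem: if $\K \leq \K_M + O(1)$ for every $A$-computable measure machine~$M$, then by the Downey--Griffiths characterization, any $X$ that is not Schnorr random fails to satisfy $\K_M(X \uh n) \geq n - O(1)$ for some $A$-computable measure machine~$M$, and hence fails $\K(X\uh n) \geq n - O(1)$, so $X$ is not Martin-L\"of random. Thus $\MLR \subseteq \SR^A$, i.e., $A \in \Lowpair{\MLR}{\SR}$.

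For the forward direction $(i) \Rightarrow (ii)$, I would mirror the proof of the previous proposition almost verbatim. Given an $A$-computable measure machine~$M$, set $f(\sigma) = 2^{-\K_M(\sigma)}$. As before, $f$ is $A$-left-c.e., and the sum $\sum_\sigma f(\sigma)$ is computed from the measure of $\dom(M)$, which is $A$-computable; hence $\sum_\sigma f(\sigma)$ is $A$-computable. By the remark following Theorem~\ref{thm:low-for-K-via-sums} (the passage from $A$-left-c.e.\ to $A$-computable via the ``increase at stage~$t$'' trick), we may feed this to Proposition~\ref{prop:series-mlr-sr} --- here we use that $\Lowpair{\MLR}{\SR}$ is exactly the class for which $A$-computable summable functions with computable sum are dominated by a left-c.e.\ \emph{summable} function~$g$ (with no computability requirement on $\sum_\sigma g(\sigma)$). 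This is the crucial point: compared to $\Low{\SR}$, we drop the demand that the dominating function's sum be computable, and correspondingly we get only $\K \leq \K_M + O(1)$ rather than a computable measure machine on the other side.

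From such a $g$ with $f \leq g$ and $\sum_\sigma g(\sigma) < +\infty$, fix $c$ with $\sum_\sigma f(\sigma) \leq 2^c$ (note $\sum_\sigma g(\sigma)$ need not be bounded by $2^c$, but since it is finite we can still run Kraft--Chaitin after suitable renormalization --- pick any $c'$ with $\sum_\sigma g(\sigma) \leq 2^{c'}$ and use $c'$ in place of $c$). Enumerate the Kraft--Chaitin set $L = \{(k,\sigma) : g(\sigma) \geq 2^{-k+c'+1}\}$; the weight bound $\sum_{(k,\sigma)\in L} 2^{-k} \leq 2^{-c'}\sum_\sigma g(\sigma) \leq 1$ goes through exactly as before, so by the Kraft--Chaitin theorem there is a prefix-free machine~$M'$ with $\K_{M'} \leq -\log g + c' + 1 \leq -\log f + c' + 1 \leq \K_M + c' + 1$. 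Finally, since $M'$ is a genuine prefix-free machine, $\K \leq \K_{M'} + O(1)$, giving $\K \leq \K_M + O(1)$.

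The main obstacle --- really the only delicate point --- is keeping straight which side of the domination inequality carries a computability-of-sum requirement and which does not, so that one invokes Proposition~\ref{prop:series-mlr-sr} rather than Proposition~\ref{prop:series-sr}. Everything else (the left-c.e.-ness of $f$, the $A$-computability of $\sum_\sigma f(\sigma)$ from $\mu(\dom M)$, the Kraft--Chaitin weight estimate) is identical to the preceding proof, so the proposition should be stated as following ``in the same way'' with only this substitution flagged.
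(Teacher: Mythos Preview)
Your proposal is correct and matches the paper's intended proof: the paper explicitly says the argument is ``almost identical to the proof of the previous result (using Proposition~\ref{prop:series-mlr-sr} instead of Proposition~\ref{prop:series-sr})'' and leaves it to the reader, which is precisely what you have written out. Your observation that the Kraft--Chaitin constant must be chosen relative to $\sum_\sigma g(\sigma)$ rather than $\sum_\sigma f(\sigma)$ is also apt (and in fact corrects a minor slip in the preceding proof).
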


The proof is almost identical to the proof of the previous result (using Proposition~\ref{prop:series-mlr-sr} instead of Proposition~\ref{prop:series-sr}) and is left to the reader. \\

\subsection{A final application: $\Low{\MLR}=\Lowpair{\MLR}{\CR}$}\label{ssec:apps-winning}

We finish with an alternative proof that $\Low{\MLR}=\Lowpair{\MLR}{\CR}$. This was shown by Nies~\cite{Nies2005,Nies2009}, but the only known proof is long and technical. We believe that our proof is more comprehensible. It uses the covering characterization of $\Lowpair{\MLR}{\CR}$ and the characterization of $\Low{\MLR}$ via summable series (which as we pointed out, is an easy consequence of the coding theorem). Together with Corollary~\ref{cor:w2r-lowness}(ii), we in fact have an alternate proof that $\Low{\MLR}=\Lowpair{\SMLR}{\CR}$ (Nies~\cite{Nies2009}).

\begin{thm} \label{thm:low-mlr-cr}
The classes~$\Low{\MLR}$ and $\Lowpair{\MLR}{\CR}$ coincide.
\end{thm}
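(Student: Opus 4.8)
The easy inclusion is $\Low{\MLR}\subseteq\Lowpair{\MLR}{\CR}$: since $\MLR\subseteq\CR$, relativization gives $\MLR^A\subseteq\CR^A$, so if $A\in\Low{\MLR}$ then $\MLR=\MLR^A\subseteq\CR^A$, i.e.\ $A\in\Lowpair{\MLR}{\CR}$. For the converse I would fix $A\in\Lowpair{\MLR}{\CR}$ and combine two earlier results. On the one hand, by the covering characterization of Section~\ref{ssec:apps-lowness}, $A\in\Lowpair{\MLR}{\CR}$ means precisely that every $A$-winning open set is covered by a bounded c.e.\ open set. On the other hand, by Theorem~\ref{thm:low-for-K-via-sums} together with Remark~\ref{rem:low-for-K-via-sums2}, to conclude $A\in\Low{\MLR}$ it suffices to show that every $A$-computable summable function $f\colon\N\to\R^{\ge0}$ is dominated by a left-c.e.\ summable function $g$. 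So, given such an $f$, the plan is to build from $f$ an $A$-winning open set $[U]$, invoke the hypothesis to cover it by a bounded c.e.\ open set $\mathcal V$, and then read off a suitable $g$ from $\mathcal V$.

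First I would dispose of a finite initial segment. Because $f$ is summable, we may choose an $N$ (depending on $f$) with $f(n)<1/4$ for all $n>N$ and $\sum_{n>N}f(n)<1/16$; the values $f(0),\dots,f(N)$ are dominated by a suitably large constant, and this finite part contributes only finitely to the sum, so it is enough to produce $g$ on the tail. Re-indexing, I assume from now on that $f(n)<1/4$ for all $n$ and $\sum_n f(n)<1/16$. For each $n$ let $p_n$ be the least power of $2$ with $p_n\ge f(n)$; then $p_n$ is a dyadic rational, $f(n)\le p_n\le 1/4$ and $p_n<2f(n)$, and $n\mapsto p_n$ is $A$-computable. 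Put $\lambda_n=(1-2p_n)/(1-p_n)\in(0,1)$ and note $\lambda_n\ge 1-2p_n$, so $L:=\prod_n\lambda_n\ge\prod_n(1-2p_n)\ge 1-\sum_n 2p_n>1-4\sum_n f(n)>3/4$.

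Working inside the identification $\cs\cong[0,1]^{\N}$ used in the proof of Proposition~\ref{prop:series-mlr-sr}, with $\mathcal B_{n,\alpha}=\{X:X_n\in[0,\alpha)\}$, I would let $d_n$ be the normed martingale that depends only on the bits of the $n$th coordinate and, betting along the binary expansion of $X_n$, ends with capital $2$ if $X_n<p_n$ (probability $p_n$; write $p_n=2^{-k_n}$) and with capital $\lambda_n$ otherwise: concretely, $d_n$ equals $1$ initially, stays in $[1,2]$ and increases while the first $k_n$ digits of $X_n$ are $0$, reaching $2$ exactly when the $k_n$th such digit is read, and drops to $\lambda_n$ (and stays there) as soon as a $1$ appears among those digits, the intermediate bets being chosen to keep the martingale fair at each step. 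Each $d_n$ is exactly $A$-computable (its values are rationals with $A$-computable indices, as $p_n$ is), and since the $d_n$ act on pairwise disjoint blocks of bit positions, the product $d=\prod_n d_n$ is again a normed, exactly $A$-computable martingale — for any finite $\sigma$ only finitely many factors differ from $1$. Fix a rational $q$ with $1<q<2L$, for instance $q=5/4$, which works since $2L>3/2$. If $X_n<p_n$ for some $n$, then at the moment coordinate $n$ is resolved we have $d_n=2$ and $d_m\ge\lambda_m$ for every $m\ne n$, so $d$ has reached $2\prod_{m\ne n}\lambda_m\ge 2L>q$; hence the $(d,q)$-winning open set $[U]$ contains $\bigcup_n\mathcal B_{n,p_n}$.

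Now $[U]$ is an $A$-winning open set, so the hypothesis on $A$ furnishes a bounded c.e.\ open $\mathcal V\supseteq[U]$, whence $\mathcal B_{n,p_n}\subseteq\mathcal V$ for every $n$. Exactly as in Proposition~\ref{prop:series-mlr-sr}, I would set $g(n)=\sup\{\alpha\in[0,1]:\mathcal B_{n,\alpha}\subseteq\mathcal V\}$: this is left-c.e.\ uniformly in $n$ because $\mathcal V$ is c.e.\ open, it satisfies $g(n)\ge p_n\ge f(n)$, and $\sum_n g(n)<\infty$ because $\bigcup_n\mathcal B_{n,g(n)}\subseteq\mathcal V$ forces $\prod_n(1-g(n))=1-\mu\big(\bigcup_n\mathcal B_{n,g(n)}\big)\ge 1-\mu(\mathcal V)>0$. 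Combined with the constant used on the initial segment, this yields the desired left-c.e.\ summable $g\ge f$, and hence $A\in\Low{\MLR}$. The one genuinely delicate point — the main obstacle — is the construction of $d$: a normed martingale cannot reach a fixed threshold $>1$ on each of infinitely many independent ``winning events'' unless the accumulated loss factor stays bounded below by an amount we control, and $\sum_n f(n)$ need not be $A$-computable; this is exactly why I first peel off a finite part of $f$, so that the remaining tail has small enough sum to keep $L$ above $3/4$ and to license the choice $q=5/4$.
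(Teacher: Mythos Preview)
Your proof is correct and follows the same overall strategy as the paper: reduce via Theorem~\ref{thm:low-for-K-via-sums} and Remark~\ref{rem:low-for-K-via-sums2} to dominating an $A$-computable summable~$f$ by a left-c.e.\ summable~$g$, encode~$f$ into a family of independent events whose union lies inside an $A$-winning open set, cover that set by a bounded c.e.\ open set using the characterization of $\Lowpair{\MLR}{\CR}$ from Section~\ref{ssec:apps-lowness}, and read off~$g$ exactly as in the proof of Proposition~\ref{prop:series-mlr-sr}.

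The one real difference is in the construction of the $A$-computable martingale certifying that the encoded set is winning. The paper uses an \emph{additive} ``reserved capital'' martingale: it fixes a computable partition of~$\N$ into intervals $I_{i,l}$ of length~$l$, reserves capital $q\,2^{-a_i}$ for each~$i$ (possible as soon as $\sum_i 2^{-a_i}<1/q$), and doubles this reserve along~$I_{i,a_i}$; if that interval is all zeros the reserve alone grows to~$q$, independently of what happened to the other reserves. You instead build a \emph{multiplicative} product $d=\prod_n d_n$ over the coordinates of $[0,1]^\N$, where each factor either reaches~$2$ or drops to~$\lambda_n$; this obliges you to control the cumulative loss $\prod_n\lambda_n$, which is precisely why you first trim~$f$ so that its tail sum is below $1/16$ and $L>3/4$. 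Both approaches are valid. The paper's side-bet martingale needs only the normalization $\sum_i f(i)<1$, while yours trades a sharper preliminary normalization for the advantage of reusing the $[0,1]^\N$ framework of Proposition~\ref{prop:series-mlr-sr} verbatim in both the encoding and decoding steps. One small technicality you should make explicit: to ensure $n\mapsto p_n$ is $A$-computable you need the values~$f(n)$ to be (say) dyadic rationals, which is the same harmless reduction the paper makes when it writes $f(i)=2^{-a_i}$.
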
 

\begin{proof}
It is clear that $\Low{\MLR}$ is contained in $\Lowpair{\MLR}{\CR}$. We need to show the reverse implication. Let $A \in \Lowpair{\MLR}{\CR}$. To show that $A \in \Low{\MLR}$, we use Theorem~\ref{thm:low-for-K-via-sums} and Remark~\ref{rem:low-for-K-via-sums2}: we consider an $A$-computable summable function $f: \N \rightarrow \R^{\geq 0}$ and we will show that there exists a left-c.e.\ summable function~$g$ such that $f \leq g$. Without loss of generality, we assume that $\sum_i f(i) \leq 1$ and that the $f(i)$ are dyadic rational numbers in virtue of which we set $f(i)=2^{-a_i}$, the sequence of $a_i$ being $A$-computable. The proof's strategy is the following: we shall ``encode'' the function~$f$ in an~$A$-winning open set $V$. Then, since $A \in \Lowpair{\MLR}{\CR}$, there exists a bounded c.e.\ open set $\mathcal{W}$ that covers~$\mathcal{V}$, and from $\mathcal{V}$ we will build a left-c.e.\ summable function~$g$ that dominates~$f$. 

To construct the set~$\mathcal{U}$, we first consider a computable partition of $\N$ into intervals $I_{i,l}$ where for all~$i,l$, the length of $I_{i,l}$ is $l$ (the order in which the intervals are placed does not matter). For all~Ê$i,l$, we consider the set $\mathcal{Z}_{i,l}$ of sequences~$X$ such that $X(n)=0$ for all~$n \in I_{i,l}$. Note that $\mu(\mathcal{Z}_{i,l})=2^{-l}$ and the $\mathcal{Z}_{i,l}$ are pairwise independent. Then, we set
\[
\mathcal{U} = \bigcup_{i} \mathcal{Z}_{i,a_i}
\]
Since the $\mathcal{Z}_{i,a_i}$ are all independent, the measure of $\mathcal{U}$ is $1-\prod_i (1-2^{-a_i})$, which is smaller than~$1$ as the sum~$\sum_i 2^{-a_i}$ is finite. We now show that some $A$-computable martingale wins money against every member of $\mathcal{U}$. Let $q>1$ be a rational number such that $\sum_i 2^{-a_i} < 1/q$. We define an $A$-computable martingale~$d$ as follows. For each~$i$, we reserve an amount $q2^{-a_i}$. When betting on a sequence~$X$, at the start of an interval $I_{i,l}$, the martingale $d$ check whether $l=a_i$. If not, $d$ does not bet on any position of the interval. If so, then $d$ uses the reserved capital $q2^{-a_i}$ to bet that $X$ contains only zeroes on the interval $I_{i,a_i}$. This is done by first betting an amount $x=q2^{-a_i}$ on~$0$, then if correct an amount $2x$, then $4x$, etc., stopping if some guess was incorrect. If all of its guesses are correct, the capital of~$d$ has increased by $x2^{a_i}=q$ at the end of the interval $I_{i,a_i}$. Thus, the martingale $d$ is $A$-computable and reaches a capital of at least~$q$ on every element~$X$ of $\mathcal{U}$. Hence $\mathcal{U}$ is contained in the winning set $\mathcal{V}=[V]$ with 
\[
V= \{\sigma~ \text{minimal s.t.\ } d(\sigma) \geq q\}
\] 
Now, since $A \in \Lowpair{\MLR}{\CR}$, there exists an unrelativized bounded c.e.\ open set $\mathcal{W}$ that covers~$\mathcal{V}$. Then, set for all~$i$, $g(i)=2^{-b_i}$ with
\[
b_i = \min \{l\;  \mid \; \mathcal{Z}_{i,l} \subseteq \mathcal{W}\}
\]
The sequence~$b_i$ is right-c.e.\, and by construction~$b_i \leq a_i$ as $\mathcal{Z}_{i,a_i} \subseteq \mathcal{U} \subseteq \mathcal{W}$, so $g(i) \geq f(i)$. Finally, notice that the sum $\sum_i 2^{-b_i}$ is finite as
\[
1 > \mu(\mathcal{W}) \geq 1- \prod_i (1- 2^{-b_i})
\]
hence $\prod_i (1- 2^{-b_i})>0$ so $\sum_i 2^{-b_i}$ converges. Therefore the function~$g$ is as wanted. 
\end{proof}


%
\bibliography{open_coverings}
\bibliographystyle{plain}

\end{document}